\journal{Physica D}
\newcommand{\R}{\mathbbm{R}}
\newcommand{\ind}[1]{\mathbbm{1}_{#1}}
\newcommand{\derpart}[2]{ \frac{\partial #1}{\partial #2} } 
\newcommand{\der}[2]{ \frac{\text{d} #1}{\text{d} #2} }  
\newcommand{\Exp}{\mathbbm{E}} 
\newcommand{\Lop}{\mathcal{L}}
\newtheorem{thm}{Theorem}
\newtheorem{theorem}{Theorem}
\newtheorem{proposition}{Proposition}
\newtheorem{corollary}{Corollary}
\newtheorem{definition}{Definition}
\newtheorem{lemma}{Lemma}
\newtheorem{example}{Example}
\newenvironment{remark}{\par {\noindent \it \sc Remark.} \small \it } {}
\definecolor{orange}{rgb}{.30,0.20,1.0}
\begin{document}
	\begin{frontmatter}
	
	\title{Dynamics and absorption properties of stochastic equations with H\"older diffusion coefficients} 

	\author[John]{Jonathan Touboul}
	  \author[Gilles]{Gilles Wainrib}
	\address[John]{ The Mathematical Neuroscience Laboratory, CIRB\footnote{CNRS UMR 7241,
	INSERM U1050, UPMC ED 158, MEMOLIFE PSL*}, Coll\`ege de France\@, 11, place Marcelin Berthelot, 75005 Paris, France and	INRIA Paris-Rocquencourt, Mycenae Team\\
	Phone: (+33) 1 44 27 13 88, jonathan.touboul@college-de-france.fr}
	\address[Gilles]{{LAGA, Universit\'e Paris 13, Villetaneuse,
	    France, and \\
	Ecole Normale Sup\'erieure, D\'epartement d'Informatique (DATA), 45 rue d'Ulm, 75005 Paris, France}}
	
	\begin{abstract}
	{In this article, we characterize the dynamics and absorption properties of a class of stochastic differential equations around singular points where both the drift and diffusion functions vanish. According to the H\"older coefficient $\alpha$ of the diffusion function around the singular point, we identify different regimes: a regime where the solutions almost surely reach the singular point in finite time, and regimes of exponential attraction or repulsion from the singular point. Stability of the absorbing state, large deviations for the absorption time, existence of stationary or quasi-stationary distributions are discussed. In particular, we show that quasi-stationary distributions only exist for $\alpha<3/4$, and for $\alpha \in (3/4,1)$, no quasi-stationary distribution is found and numerical simulations tend to show that the process conditioned on not being absorbed initiates an almost sure exponential convergence towards the absorbing state (as is demonstrated to be true for $\alpha=1$). These results have several implications in for the understanding of stochastic bifurcations, and we completely unfold two generic situations: the pitchfork and saddle-node bifurcations, and discuss the Hopf bifurcation in appendix.
	}
\end{abstract}
\begin{keyword}
stochastic bifurcations, absorption properties, Stochastic Pitchfork, Saddle-node, Hopf
\end{keyword}

\end{frontmatter}
	
	\tableofcontents
	
	\section{Introduction}
	A large class of phenomena can be described using non-linear dynamical, in such distinct domains as economics, physics or biology. Investigating the impact of noisy perturbations on such systems is of great interest, and is currently an active field of research. In the last decades, it has been shown in many different areas that applying noise on a non-linear system can lead to many counter-intuitive phenomena, such as noise-induced stabilization~\cite{arnold:83}, oscillations~\cite{osc}, synchronization~\cite{pakdaman2004noise, hasegawa2008stochastic} or transport~\cite{ratchet, transport} and stochastic resonance~\cite{stocres, coherence}. From a mathematical perspective, understanding the interplay between non-linearities and noisy perturbations is a great challenge, with many applications, such as the design of early-warning signals predicting critical transitions~\cite{kuehn2011mathematical}. Several tools have been introduced, ranging from Lyapunov functionals, stationary solutions of Fokker-Planck equations and Feller scale functions (see e.g.~\cite{khasminskii:80,cherny2005singular} for an extensive account on these concepts), the theory of random dynamical systems (RDS)~\cite{arnold:98, crauel:99,arnold2001recent,baxendale1994stochastic,baxendale1999stability,keller1999numerical}, stochastic bifurcations and normal forms~\cite{namachchivaya1990stochastic} the study of moment equations \cite{tanabe:01,touboul-hermann-faugeras:11} or multiscale stochastic methods for slow-fast systems \cite{berglund:03,berglund-gentz:06, pavliotis:08}. 

	We focus here on the dynamics of stochastic differential equations (SDE) in which both the drift and the diffusion functions vanish at some particular absorbing points, also called singular points. Such SDEs arise in several applications including diffusion approximations of population models in ecology~\cite{cattiaux-collet-etal:09,lai2011effects}, neurosciences~\cite{pakdaman:10}, and mathematical finance~\cite{emanuel1982further,cox1975notes}. Beyond the classical multiplicative noise case (geometric Brownian motion), a number of state-dependent diffusion functions arise involving generally power functions. The question we address is how the interplay between the drift and the shape of the diffusion function affects the behavior of the system.  In particular, such systems present an absorbing state, whose qualitative properties will be shown to tightly depend on the local behavior of the drift and diffusion functions around this equilibrium. Moreover, the choice of a vanishing diffusion coefficient provides a framework to study subtle competitions between the drift and the noise, leading to a rich and generic phenomenology. 
	
	In detail, we analyze a general one dimensional stochastic differential equation
	\begin{equation*}
		dx_t=f(x_t)\,dt + \gamma(x_t)\,dW_t
	\end{equation*} 
	where $W=\big(W_t\big)_{t\geq 0}$ is a Brownian motion defined on a complete probability space $(\Omega, \mathcal{F},\mathbb{P})$ endowed with the natural filtration $\big(\mathcal{F}_t\big)_t$ of $W$. {We are interested in stationary solutions of this equation, defined as the solutions whose probability distribution is invariant in time. }
We assume that there exists a singular point $x^*$ such that $f(x^*)=\gamma(x^*)=0$. The system has a trivial solution distributed as a Dirac measure localized at the singular point $x^*$, the absorbing state (or absorbing boundary). We shall investigate the stochastic stability of this absorbing state, probabilities of absorption and the behavior of the system prior from absorption (or conditioned on not hitting the absorbing boundary). {Possible stationary distributions of the process conditioned on not hitting the absorbing boundary are called \emph{quasi-stationary solutions} (see Definition~\ref{def:QSD} below for a more rigorous definition, or~\cite{cattiaux-collet-etal:09} and references therein)}. In contrast to the case of ODEs, there are at least three different notions of stochastic stability: 
	\begin{enumerate}
		\item The \emph{almost sure exponential stability} defined by the property 
		\[\mathop{\textrm{limsup}}\limits_{t\to \infty} \quad \frac 1 t \log \vert x_t -x^*\vert <0 \quad a.s.\] 
		for any initial condition $x_0\in \R$. The \emph{almost sure exponentially instability} defined by the property 
		\[\mathop{\textrm{liminf}}\limits_{t\to \infty} \quad \frac 1 t \log \vert x_t -x^*\vert >0 \quad a.s.\]
		for any initial condition $x_0\in \R$ (see e.g. \cite{mao:08}). This definition implies a strong notion of stability close to the usual exponential stability property of deterministic dynamical systems: almost all trajectories exponentially fast converge to or diverge from $x^*$. In our stochastic context, this notion can appear quite restrictive, and we will hence also discuss a weaker notion of stability:
		\item The \emph{stochastic stability} (or stability in probability), defined by the property that for all $\mu \in (0,1)$ and $r>0$ there exists $\delta$ depending on $\mu$ and $r$ such that 
		\[\mathbb{P}(\{\vert x_t-x^*\vert <r \;\;\forall t\geq 0\}) \geq 1-\mu\] 
		whenever $\vert x_0-x^*\vert <\delta$ and otherwise it is said to be unstable in probability. The solution $x^*$ is \emph{stochastically asymptotically stable} (or asymptotically stable in probability)  if it is stable in probability and for every $\mu \in (0,1)$ there exists $\eta_0$ depending on $\mu$ such that
		\[\mathbb{P}(\lim\limits_{t\to\infty} \vert x_t-x^*\vert = 0 )\geq 1-\mu\]
		whenever $\vert x_0-x^* \vert <\eta_0$.
		\item The \emph{stability (resp. instability) in the first approximation}  of a stationary solution $\bar{X}$~\cite[Chap. 7.1]{khasminskii:80}, notion widely used in the RDS theory~\cite{arnold:98}, consists in that the identically zero solution of the linearized equation around $\bar{X}$ is almost surely exponentially stable (resp. unstable). This criterion will be chiefly used when dealing with non-Dirac stationary solutions in the applications section. 
	\end{enumerate}

	To be more specific, we now consider that $x^*=0$ and assume that $f(x)$ is a smooth function, which is often the case in applications and a basis of the deterministic bifurcation theory. In detail, we assume that $f(x)=\lambda\,x+g(x)$ where $g(x)/x \to 0$ when $x\to 0$ (i.e. $f'(0)=\lambda$), and hence consider the following class of SDEs on $\R$:
	\begin{equation}\label{eq:generalg}
		dx_t=(\lambda x_t+g(x_t))\,dt + \gamma(x_t) dW_t
	\end{equation}
	with $\gamma(x)\sim \sigma \vert x \vert^{\alpha}$ at $x=0$. The properties of the absorbing state will be described as a function of the local behavior of $g$ and $\gamma$ around $0$. This analysis will allow for instance to have a better understanding of the local and global behavior of diffusion in double-well potentials as represented in figure~\ref{fig:Question}.
	
	\begin{figure}
		\centering
			\includegraphics[width=.6\textwidth]{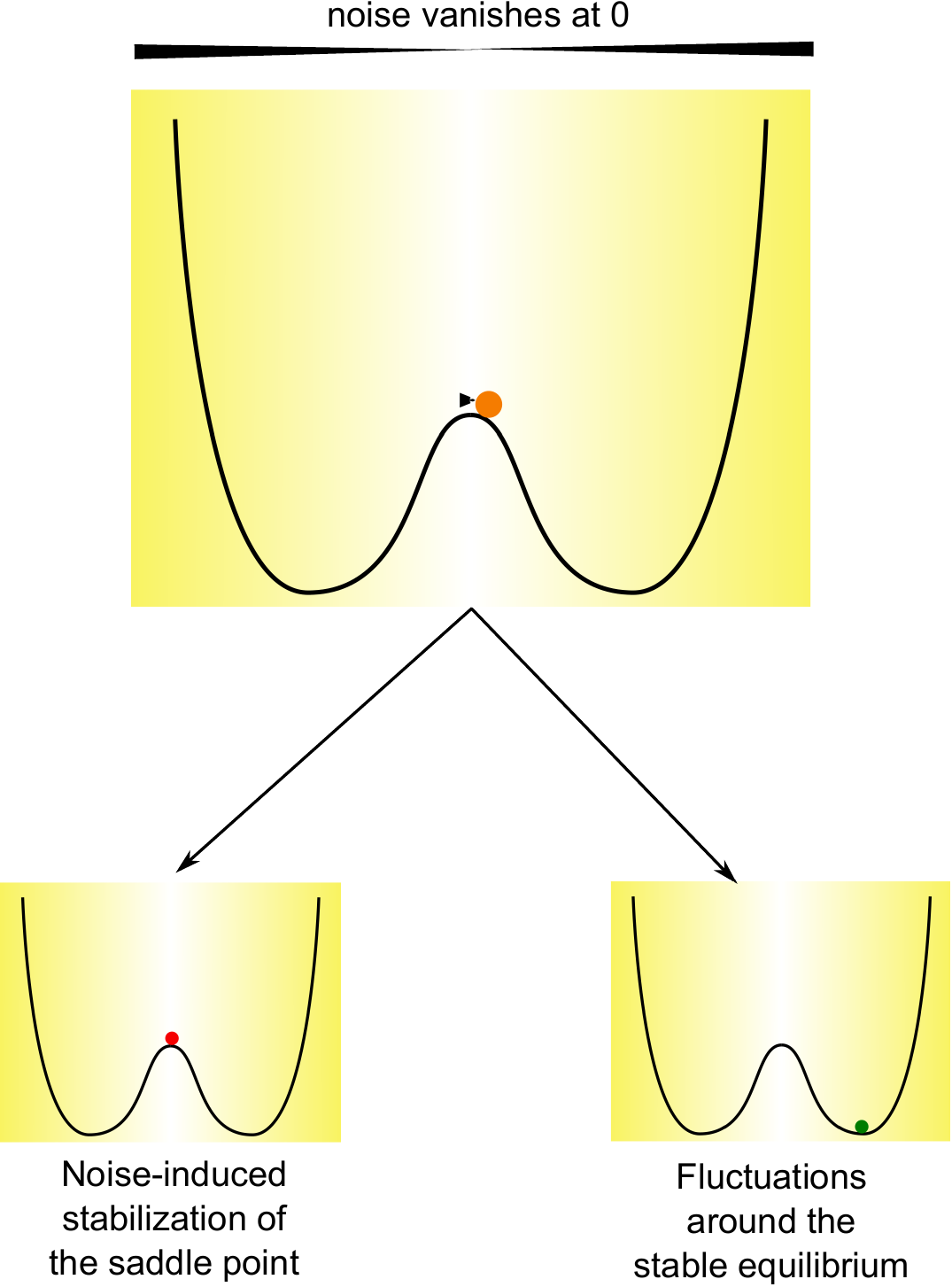}
		\caption{Stability of a singular saddle point in double-well potential perturbed by power diffusion coefficient. A particle located at the saddle point remains steady. If a small perturbation is applied, two scenarii are possible according to the local behavior of $\gamma(x)$ (see theorem~\ref{theo:Stability0}): either noise brings back the particle to the saddle point (left) or the particle falls into the well and fluctuates without ever reaching the saddle point (right). }
		\label{fig:Question}
	\end{figure}

	The behavior of the solutions to such one-dimensional singular SDEs has been intensively studied. In particular, the bifurcations of such systems with smooth diffusion functions was analyzed in the context of RDS theory in~\cite{crauel:99,arnold:98,baxendale1994stochastic}. These methods were developed in the context of Stratonovich SDEs (in which case the diffusion function appears in the effective drift in the It\^o formulation). The theory did not addressed cases where the diffusion is only H\"older continuous at the singular point. In another context, mainly using stochastic calculus theory~\cite{revuz-yor:99}, existence, uniqueness and type of singular points was address in~\cite{cherny2005singular}, and the study includes the case of power drift and diffusion functions. 
	
	In the present article, we shall consider both stability and absorption properties for a general class of singular SDEs, and aim at characterizing precisely the absorption time and global behavior of the solutions. New mathematical results proved here include almost sure absorption at the singular points and large deviations estimates of the absorption time, characterizations of possible stationary distributions, existence or non-existence of quasi-stationary distributions, and dependence of the stability of singular points on the \emph{local} H\"older coefficient of the diffusion function. The methodology is also extended to the case of the generic saddle-node bifurcation for which we completely describe the behavior as a function of drift and diffusion parameters. 

	The paper is organized as follows. We start by analyzing in detail in section~\ref{sec:general} the behavior of the solution of a general diffusion equation~\eqref{eq:generalg}. We investigate the existence, uniqueness and blow up of the solutions. The stochastic stability of the singular point is characterized and the absorption time at the singular point is shown to be almost surely finite if and only if $\alpha<1$. In that case, we show that quasi-stationary distributions exist when $\alpha< \frac 3 4$. In the case $\alpha\geq 1$, we study the existence of other stationary distributions. Application and extensions to the stochastic pitchfork and saddle-node bifurcations is discussed in section~\ref{sec:applications}, and universality properties of these behaviors is characterized.

	\section{General Theory}\label{sec:general}
	In this section we analyze the behavior of the solutions of a general equation~\eqref{eq:generalg} under a few assumptions on the function $g$ and $\gamma$. {We assume that $g$ is locally Lipschitz continuous and $\gamma$ satisfies, for some Borel function $\rho:\R_+\mapsto \R^+$ with $\int_{0^+} dx/\rho(x) = \infty$, that $\vert \gamma(x)-\gamma(y)\vert^2 \leq \rho(\vert x-y\vert)$. This imposes in particular the fact that $\gamma$ is locally H\"older $1/2$. We moreover make the following assumptions on the behavior of $g$ and $\gamma$ at $0$ and $\infty$:}
	\renewcommand{\theenumi}{(H\arabic{enumi})}
	\begin{enumerate}
		\item\label{assumption:H1} $g(x)\sim \mu x^{1+\kappa}$ at $x=0$, for some $\mu\in\R$ and $\kappa>0$
		\item\label{assumption:H2} $g(x)\leq -\nu x^{1+\beta}$ when $\vert x \vert \geq A$ for some $A>0$, for $\nu \in \R$ and $\beta>0$.
		\item\label{assumption:H3} $\gamma(x)\sim \sigma  x^{\alpha}$ at $x=0$, for some $\sigma\geq 0$ and $\alpha> 0$
		\item\label{assumption:H4} {At infinity, $\gamma$ is such that
		\begin{itemize}
			\item if $\nu > 0$, $\gamma^2(x) \leq d\;\vert x\vert^{1+\beta}$ for some $d>0$
			\item if $\nu<0$, $\gamma^2(x)\geq d\, \vert x\vert^{\delta}$ for some $d>0$ and $\delta>2+\beta$.
		\end{itemize}}
		\item\label{assumption:H5} $\gamma(x)$ only vanishes at $x=0$
	\end{enumerate}
	\begin{remark}
		In the above notations, for $x< 0$, $x^a$ shall be understood as $-\vert x \vert^a$. Away from the singular point, we shall assume that the maps $f$ and $g$ are smooth (at least Lipchitz continuous). 
	\end{remark}

	Assumptions~\ref{assumption:H1} and~\ref{assumption:H3} describe the behavior of the solutions close from zero and will govern the stability of the singular point $0$. The assumptions~\ref{assumption:H2},~\ref{assumption:H4} and~\ref{assumption:H5} are global properties that will be chiefly used for explosion matters, and existence of stationary distributions. Such assumptions are generally not necessary in the deterministic case: indeed, in the local unfolding of a bifurcation, the presence of a non-zero fixed point implies the local existence of a manifold of fixed points under some non-degeneracy conditions. However, in the stochastic case, as soon as the singular point (where noise vanishes) is unstable, the system will visit very regions very remote from the singular point with positive probability, and these latter conditions will come into play. {We observe that depending on the sign of $\nu$, we are making different assumptions on $\gamma$. When $\nu>0$, the drift has a contracting effect and prevents from blowing up, allowing a large class of diffusion coefficients provided that they do not diverge too fast. In contrast, when $\nu<0$, the solutions blow up in finite time in the absence of noise. In that case, a stringent assumption on the divergence of $\gamma$ is necessary (see Proposition~\ref{pro:ExistenceUniqueness}).}

	The stochastic process distributed as a Dirac at zero ($X_t=0$ a.s.) is solution of the equation whatever the parameters. Studying the stationary solutions and their stability will make essential use of the differential operator $\mathcal{L}$ associated with It\^o's representation of the diffusion acting on twice differentiable functions $V \in C^{2}(\R,\R)$:
	\[\Lop V(x) = (\lambda x +g(x))\,V'(x)+\frac 1 2 \gamma(x)^2 V''(x).\]

	\subsection{Existence and uniqueness of solutions}
	\label{sec:existence}
	The first question arising with equation \eqref{eq:generalg} is its well-posedness in this context where the diffusion coefficient is singular: for $\alpha < 1$, the diffusion coefficient is not Lipschitz continuous at $0$, and for $\alpha > 1$ it does not satisfy the linear growth condition generally imposed for the existence and uniqueness. Based on existing results~\cite{revuz-yor:99}, we prove the following property:
	\begin{proposition}\label{pro:ExistenceUniqueness}
		For any $\alpha \in [\frac 1 2, \infty)$, there exists a unique strong solution to the equation \eqref{eq:generalg}. Under the assumptions~\ref{assumption:H2} and~\ref{assumption:H4}, the solution never blows up in finite time.
	\end{proposition}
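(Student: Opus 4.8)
The plan is to decouple the two assertions: first establish pathwise uniqueness together with weak existence and invoke the Yamada--Watanabe principle to upgrade these to existence and uniqueness of a strong solution (defined up to a possible explosion time $\tau_\infty$); then rule out explosion by a Lyapunov argument.

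For the \emph{uniqueness} part, the key observation is that the drift $f(x)=\lambda x + g(x)$ is locally Lipschitz (by smoothness of $f$ together with~\ref{assumption:H1}), while the diffusion coefficient $\gamma$, although not Lipschitz at $0$ when $\alpha<1$, is locally $\tfrac12$-H\"older continuous \emph{precisely} because $\alpha\geq\tfrac12$: near $0$ one has $|\gamma(x)-\gamma(y)|\le C|x-y|^{\min(\alpha,1)}$ with $\min(\alpha,1)\ge\tfrac12$, and away from $0$ it is Lipschitz on compacts by~\ref{assumption:H3}--\ref{assumption:H5}. Hence $\gamma$ admits the modulus of continuity $h(u)=Cu^{1/2}$, which satisfies the Yamada--Watanabe condition $\int_{0^+}h(u)^{-2}\,du=+\infty$. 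Working on each bounded interval (where $f$ is genuinely Lipschitz) and localizing by exit times, the one-dimensional Yamada--Watanabe theorem (see~\cite{revuz-yor:99}) yields pathwise uniqueness. Weak existence up to explosion follows from the continuity and local boundedness of the coefficients by the standard construction (via the associated martingale problem, or Skorokhod's theorem after truncation). Pathwise uniqueness together with weak existence then gives, by the Yamada--Watanabe equivalence, a unique strong solution up to $\tau_\infty$.

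It remains to prove $\tau_\infty=+\infty$ almost surely, which I would do through Khasminskii's non-explosion criterion with the \emph{logarithmic} Lyapunov function $V(x)=\log(1+x^2)$. A direct computation gives
\[
\mathcal{L}V(x) = \big(\lambda x + g(x)\big)\frac{2x}{1+x^2} + \gamma(x)^2\,\frac{1-x^2}{(1+x^2)^2},
\]
so that, as $|x|\to\infty$, the drift term behaves like $2\lambda + 2g(x)/x$ and the diffusion term like $-\gamma(x)^2/x^2\sim -d^2|x|^{2\delta-2}$ by~\ref{assumption:H4}. The crucial point is that the second-derivative contribution is \emph{negative} for large $|x|$, so strong diffusion never triggers explosion through $V$; this is what makes the logarithmic choice succeed where $V(x)=x^2$ fails. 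In case~\ref{assumption:H6}(i) ($\nu\ge 0$), assumption~\ref{assumption:H2} forces the confining estimate $x\,g(x)\le -\nu|x|^{2+\beta}$ for large $|x|$, whence the drift term is bounded above (tending to $-\infty$ when $\nu>0$) and the diffusion term is $\le 0$, giving $\mathcal{L}V\le C$. In case~\ref{assumption:H6}(ii) ($\nu<0$), the drift term may grow like $|\nu|\,|x|^{\beta}$, but the condition $\delta>1+\beta/2$ ensures $2\delta-2>\beta$, so the negative diffusion term $-d^2|x|^{2\delta-2}$ dominates and again $\mathcal{L}V\to -\infty$. In both regimes $\mathcal{L}V\le C\le C(1+V)$ on all of $\R$ (the compact part being handled by continuity), and the standard stopping-time plus Gronwall estimate on $\Exp[V(x_{t\wedge\tau_n})]$ prevents finite-time blow-up.

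The main obstacle is the non-explosion in the anti-confining case~\ref{assumption:H6}(ii): here the drift genuinely pushes outward at a polynomial rate and a naive moment Lyapunov function diverges, so non-explosion is in fact a noise-induced stabilization phenomenon. Capturing it requires both the correct Lyapunov functional (logarithmic rather than polynomial) and the sharp exponent balance $\delta>1+\beta/2$ furnished by~\ref{assumption:H6}. A secondary technical care is the globalization of Yamada--Watanabe uniqueness across the singularity at $0$ combined with the merely local Lipschitz character of the drift, which I would handle by localization on bounded intervals prior to establishing non-explosion.
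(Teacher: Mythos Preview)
Your existence and uniqueness argument is essentially the same as the paper's: both ultimately rest on the Yamada--Watanabe pathwise uniqueness criterion for a locally Lipschitz drift and a $\tfrac12$-H\"older diffusion coefficient (the paper simply cites the packaged version in~\cite[Theorem IX.3.5]{revuz-yor:99} rather than spelling out the modulus-of-continuity condition).

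For non-explosion, however, you take a genuinely different route. The paper works on each half-line $(0,\infty)$ and $(-\infty,0)$ separately and applies Feller's test: it shows the scale function
\[
p(x)=\int_c^x \exp\Big(-2\int_c^y \frac{\lambda\xi+g(\xi)}{\gamma^2(\xi)}\,d\xi\Big)dy
\]
diverges at $\pm\infty$, by estimating the inner integral as $\gtrsim (2\nu/d^2)\,y^{2+\beta-2\delta}$ and checking this is bounded below under~\ref{assumption:H6}. Your approach instead uses Khasminskii's global Lyapunov criterion with $V(x)=\log(1+x^2)$, exploiting the fact that the second-derivative term $\gamma(x)^2 V''(x)\sim -d^2|x|^{2\delta-2}$ is \emph{negative} at infinity, so that in case~\ref{assumption:H6}(ii) the noise actively suppresses the outward drift $\sim |\nu|\,|x|^{\beta}$ once $2\delta-2>\beta$. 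Both arguments are correct and pivot on the same exponent balance $\delta>1+\beta/2$. The Feller route is sharper in one dimension (it yields necessary and sufficient conditions and dovetails with the later absorption analysis in the paper), while your Lyapunov argument is more portable to higher dimensions and makes the noise-induced stabilization mechanism in case (ii) more transparent---your insistence on a logarithmic rather than polynomial $V$ is exactly the right diagnosis of why this regime is delicate.
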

	\begin{proof}
	The proof uses classical results of stochastic analysis that can be found in~\cite{revuz-yor:99}. We provide a short description of the proof for the sake of completeness. 
	
	For $\alpha\geq 1$ is very classical. First, both the drift and diffusion functions are locally Lipschitz continuous, implying strong uniqueness of solutions (see e.g.~\cite[Chap. IX.2]{revuz-yor:99}). Second, the drift and the diffusion functions are locally bounded, ensuring the existence of solutions up to an explosion time.  

	For $\alpha <1$, the linear growth condition is clearly satisfied, but the diffusion coefficient is not Lipschitz at $0$. Existence and uniqueness was proved in~\cite[theorem IX.3.5]{revuz-yor:99} based on the properties of the local time at zero of the diffusion for $\alpha\geq \frac 1 2$ and non-uniqueness of solutions for $\alpha<\frac 1 2$. 

		Explosion properties are investigate through Feller's test (see~\cite[Proposition 5.22.]{karatzas-shreve:87}). The function $\frac{\lambda x + g(x)+1}{\gamma(x)^2}$ is locally integrable on both the intervals $(-\infty, 0)$ and $(0,\infty)$. We consider the interval $(0,\infty)$ for instance (the same principle applies for the interval $(-\infty,0)$), and for some $c>0$, we define the scale function:
		\[p(x):=\int_c^x \exp\left(-2\int_c^y \frac{\lambda \xi+g(\xi)}{\gamma^2(\xi)}\,d\xi \right)dy.\]
		The process almost surely never diverges before possibly reaching zero as soon as $p(x)$ is diverges at $\infty$. {At infinity, we have $\lambda \xi+g(\xi)\sim g(\xi)$ and:
		\begin{itemize}
			\item for $\nu>0$, we have $-2 \frac{g(\xi)}{\gamma^2(\xi)} \geq \frac{2\nu}{d}.$
			\item for $\nu<0$, we have $-2 \int_c^y \frac{g(\xi)}{\gamma^2(\xi)} \geq  \frac{2\nu}{d(2+\beta-\delta)} (y^{2+\beta-\delta} - c^{2+\beta-\delta}).$
		\end{itemize}
		In both cases, assumption~\ref{assumption:H4} implies that the integrand is lower bounded at infinity, hence $p(x)\to\infty$ when $x\to\infty$. We thus conclude that the process does not blow up in finite time in these cases, which ends the proof. }
	\end{proof}

	\subsection{Stability of $\delta_0$}
	We now address the stability of the equilibrium $\delta_0$, and prove the following:

	\begin{thm}\label{theo:Stability0}
		Let us denote by $\delta_0$  the stationary solution $X_t=0$ a.s. for all $t$. We have:
		\begin{itemize}
			\item For $\alpha<1$, $\delta_0$ is asymptotically stable in probability whatever $\lambda\in\R$ and $\sigma\neq0$.
				\item For $\alpha=1$, $\delta_0$ is:
				\renewcommand{\theenumi}{(\roman{enumi})}
				\begin{enumerate}
					\item asymptotically stable \emph{in probability} for $\lambda < \frac{\sigma^2}{2}$. Moreover, if $g(x)=-\mu x^{1+\kappa}$ for $\mu,\kappa>0$ and $\gamma(x)=\sigma\,x$, the stationary solution $\delta_0$ is \emph{almost surely exponentially stable} for $\lambda < \frac{\sigma^2}{2}$.

					\item unstable in probability for $\lambda > \frac{\sigma^2}{2}$ 
				\end{enumerate} 
			\item For $\alpha>1$, $\delta_0$ is :
			\begin{itemize}
				\item stable in probability for $\lambda<0$
				\item unstable in probability for $\lambda>0$
			\end{itemize}
		\end{itemize}
	\end{thm}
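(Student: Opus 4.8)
The plan is to treat the two half-lines $(0,\infty)$ and $(-\infty,0)$ separately, exploiting the odd symmetry encoded in the sign convention of the remark. Since both the drift and $\gamma$ vanish at $0$, strong uniqueness (Proposition~\ref{pro:ExistenceUniqueness}) forbids the solution from crossing $0$: a trajectory started at $x_0>0$ stays positive until it is possibly absorbed at $0$. Combined with the non-explosion guaranteed under \ref{assumption:H2}, \ref{assumption:H4}, \ref{assumption:H6}, this reduces every assertion to the local analysis of a positive diffusion near the boundary $0$, for which two equivalent instruments are available: the generator $\Lop$ together with Lyapunov functions (Khasminskii's method), and the scale function $p$ introduced in the proof of Proposition~\ref{pro:ExistenceUniqueness}. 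I would use $\Lop$ to read off the thresholds and prove the ``in probability'' statements, and the scale function for instability and for geometric intuition.

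The central computation is the action of $\Lop$ on the test functions $V(x)=|x|^{q}$, $q>0$, which are smooth away from $0$, vanish at $0$, and are positive elsewhere. Using $g(x)=o(x)$ from \ref{assumption:H1} and $\gamma(x)^2\sim\sigma^2|x|^{2\alpha}$ from \ref{assumption:H3}, one finds near $0$
\[\Lop V(x)\;\sim\;\lambda\,q\,|x|^{q}\;+\;\tfrac12\sigma^2\,q(q-1)\,|x|^{\,q+2(\alpha-1)}.\]
The whole theorem then reduces to comparing the exponents $q$ and $q+2(\alpha-1)$ and choosing $q$:
\begin{itemize}
\item for $\alpha<1$ the diffusive term dominates, and taking $q\in(0,1)$ makes its coefficient $\tfrac12\sigma^2q(q-1)$ negative, so $\Lop V<0$ in a punctured neighbourhood of $0$ for every $\lambda$ and every $\sigma\neq0$;
\item for $\alpha=1$ the two exponents coincide and $\Lop V\sim q\bigl(\lambda+\tfrac12\sigma^2(q-1)\bigr)|x|^{q}$, which can be made strictly negative by some $q\in(0,1-2\lambda/\sigma^2)$ precisely when $\lambda<\sigma^2/2$, and is strictly positive for all small $q>0$ when $\lambda>\sigma^2/2$;
\item for $\alpha>1$ the drift term dominates, $\Lop V\sim\lambda\,q\,|x|^{q}$, whose sign is that of $\lambda$.
\end{itemize}
Feeding a Lyapunov function with $\Lop V\le0$ into Khasminskii's stability theorem~\cite[Chap.~5--6]{khasminskii:80} yields (asymptotic) stability in probability, while $\Lop V\ge0$ together with the instability theorem yields instability in probability; this reproduces every threshold in the statement.

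I would corroborate and sharpen this through the scale function $p$ of Proposition~\ref{pro:ExistenceUniqueness}, whose density near $0$ behaves like $\exp\bigl(-2\int^{y}\tfrac{\lambda\xi}{\sigma^2\xi^{2\alpha}}\,d\xi\bigr)$. A direct estimate gives: for $\alpha<1$ the density is bounded and $p(0^+)$ is finite; for $\alpha=1$ it is $\sim|y|^{-2\lambda/\sigma^2}$, so $p(0^+)$ is finite iff $\lambda<\sigma^2/2$; for $\alpha>1$ one has $p(0^+)$ finite iff $\lambda<0$ and $p(0^+)=-\infty$ iff $\lambda>0$. Whenever $p(0^+)=-\infty$ the point $0$ lies at infinite distance in natural scale, the hitting-probability identity $\mathbb{P}_{x_0}(\tau_r<\tau_a)=\tfrac{p(x_0)-p(a)}{p(r)-p(a)}$ forces $\mathbb{P}_{x_0}(\tau_r<\infty)\to1$ as $x_0\to0$, and $0$ is unstable; whenever $p(0^+)$ is finite the same identity gives $\mathbb{P}_{x_0}(\tau_r<\tau_{0^+})\to0$, i.e.\ stability in probability. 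The almost sure exponential stability claimed for $\alpha=1$ with $g(x)=-\mu x^{1+\kappa}$ and $\gamma(x)=\sigma x$ is the one place where I would compute explicitly: It\^o's formula gives $\log x_t=\log x_0+(\lambda-\tfrac{\sigma^2}{2})t-\mu\int_0^t x_s^{\kappa}\,ds+\sigma W_t$, whence, dividing by $t$ and using $W_t/t\to0$ a.s.\ together with $\mu\int_0^t x_s^\kappa\,ds\ge0$, one obtains $\limsup_{t\to\infty}\tfrac1t\log|x_t|\le\lambda-\tfrac{\sigma^2}{2}<0$ almost surely.

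The main obstacle is not the threshold bookkeeping but the passage from the generator and scale estimates to the precise notions of stability in the statement. Three points require care. First, the natural Lyapunov functions $|x|^q$ with $q\in(0,1)$ are only $C^2$ away from $0$, so Khasminskii's theorems must be applied on a punctured neighbourhood and combined with the boundary behaviour; this is harmless because the solution cannot cross $0$, but it must be argued. Second, the strict inequality $\Lop V<0$ suggests asymptotic stability even for $\alpha>1,\ \lambda<0$, whereas only stability in probability is asserted there: since for $\alpha\ge1$ the boundary $0$ is not reached in finite time (absorption is a.s.\ finite only for $\alpha<1$), the convergence $x_t\to0$ cannot be read off from absorption and one must settle for the weaker, rigorously controlled conclusion. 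Third, turning $\mathbb{P}_{x_0}(\tau_r<\infty)\to1$ into genuine instability in probability requires ruling out return from below the cutoff $a$, which is exactly what $p(0^+)=-\infty$ (inaccessibility of $0$) provides; making this quantitative, uniformly as the lower cutoff $a\to0$, is the delicate step.
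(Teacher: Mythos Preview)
Your proposal is correct and shares the paper's overall Lyapunov-functional strategy, but diverges in two places worth noting. For the stability-in-probability claims you use the unified family $V(x)=|x|^{q}$ and read off the thresholds from the dominant term of $\Lop V$; the paper does exactly this, though it fixes a specific exponent in each case ($V=x^{\alpha}$ for $\alpha<1$, $V=x^{\eta}$ with $\eta=\tfrac12-\lambda/\sigma^{2}$ for $\alpha=1$, $V=x^{2}$ for $\alpha>1$) before invoking Khasminskii's Theorem~V.4.1. For \emph{instability}, however, the paper does not use $\Lop V\ge0$ on $|x|^{q}$ nor the scale function: it takes $V(x)=-\log|x|$, which blows up at $0$ and satisfies $\Lop V\le0$ in a punctured neighbourhood (equivalent to $-\lambda+\sigma^{2}/2$ for $\alpha=1$, and to $-\lambda$ for $\alpha>1$), and applies Khasminskii's instability criterion (Theorem~V.4.2). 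This is cleaner than your scale-function route and sidesteps the ``ruling out return from below the cutoff'' issue you flag; your remark that ``$\Lop V\ge0$ together with the instability theorem'' suffices is not quite right as stated for $V=|x|^{q}$, so the $-\log$ trick is the missing ingredient there. Finally, for the almost sure exponential stability when $\alpha=1$, $g(x)=-\mu x^{1+\kappa}$, $\gamma(x)=\sigma x$, your direct It\^o computation on $\log x_t$ is more elementary than the paper's argument, which instead applies Mao's criterion \cite[Theorem~3.3]{mao:08} to $V(x)=x^{2}$ (checking $\Lop V\le(2\lambda+\sigma^{2})V$ and $|V'\gamma|^{2}=4\sigma^{2}V^{2}$); both yield the same bound $\limsup t^{-1}\log|x_t|\le\lambda-\sigma^{2}/2$.
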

{
\begin{remark}
	 The case $\alpha \geq 1$ was investigated using tools from RDS theory in the Stratonovich formalism and speed functions in~\cite{arnold:98,crauel:99}. Our theorems extends this theory to the case $\alpha<1$ and uses Lyapunov functionals and stability theory for stochastic equations allowing to quantify the exponential speed of convergence.
\end{remark}
}
	\medskip

	\begin{proof}
		The case $\alpha=1$ was already understood in the context of stochastic bifurcations, and proofs of the result can be found in~\cite{arnold:83,mao:08}. The cases $\alpha\neq 1$ are simple applications of Lyapunov functional theory. The proof proceeds as follows:\\
		{\bf Case $\alpha<1$}: Let us consider $V(x)=x^{\alpha}$ as a Lyapunov functional for the dynamics. This function is clearly $C_2^0(\R)$, i.e. it is twice continuously differentiable except at $x=0$. Moreover, we have:
		\[\Lop V(x) = \alpha x^{\alpha-1}(\lambda x +g(x)) - \frac{\alpha(1-\alpha)}{2}\;\gamma(x)^2 x^{\alpha -2}\]
		Since we assume that $\alpha<1$ and $g(x)/x \to 0$ at $0$, the leading term close to $0$ is of order $- \alpha(1-\alpha)/2\;\sigma^2 x^{3\alpha -2}$ which is strictly negative for sufficiently small $x$. More precisely, there exists $r>0$ such that $V\in C_2^0(U_r)$ where $U_r$ is the open ball of radius $r$ and such that $\Lop V <0 $ for all $x \in U_r$. Moreover, for any $\varepsilon$ such that $0<\varepsilon<r$ we have $V(r)> 0$ and $\Lop V<-c_{\varepsilon}<0$ for all $r>x>\varepsilon$. Theorem V.4.1 of~\cite{khasminskii:80} therefore applies and concludes the proof of the stability in probability of the solution $\delta_0$ whatever the parameters $\lambda \in \R$, $\sigma>0$ and $\alpha \in [1/2,1)$.

		{\bf Case $\alpha=1$}: We start considering the case $\lambda<\sigma^2/2$ and define $\eta=\frac 1 2 -\frac{\lambda}{\sigma^2}>0$. The Lyapunov function $V:x\mapsto x^{\eta}$ satisfies:
		 \[\Lop V\;(x) = \eta x^{\eta}(\lambda +\frac 1 2 (\eta-1) \gamma^2(x))+\eta x^{\eta-1}\,g(x).\] 
		Under assumption~\ref{assumption:H1} we have $x\,g(x)\sim \mu x^{2+\kappa}$, hence is equivalent, close from zero, to 
		\[\eta x^{\eta}\left (\lambda +\frac {(\eta-1)}{2} \frac{\gamma^2(x)}{x^2}\right) = \frac 1 2 (\lambda-\frac{\sigma^2}{2})\eta x^{\eta}.\]
		 Since $(\lambda-\sigma^2/2)<0$, the same argument as in the case $\alpha<1$ ensures asymptotic stability of $\delta_0$ in probability for $\lambda<\sigma^2/2$.
	 If we now further assume $g(x)=-\mu x^{1+\kappa}$ and $\gamma(x)=\sigma\,x$, considering $V(x)=x^2$, we have:
		\begin{itemize}
			\item $\Lop V\;(x) = (2\lambda + \sigma^2) x^2 - 2\,\mu\,x^{2+\kappa} \leq (2\lambda + \sigma^2) V(x)$,
			\item $\vert V'(x)\sigma\;x\vert^2 = \vert 2\sigma x^{2} \vert^2 = 4 \sigma^2 V(x)$
		\end{itemize}
		Using the stochastic stability theorem \cite[Theorem 3.3]{mao:08}, we deduce that:
		\[\mathop{\textrm{limsup}}\limits_{t\to \infty} \frac 1 t \log \vert X(t) \vert \leq - \frac{4 \sigma^2-2(2\lambda + \sigma^2)}{4} = \lambda -\frac{\sigma^2}{2} \qquad \textrm{almost surely}.\]
		By definition of the almost sure exponential stability, we conclude that the solution $0$ is almost surely exponentially stable for any $\lambda < \frac{\sigma^2}{2}$.

		Let us now deal with the case $\lambda>\sigma^2/2$. We define $V(x)=-\log(x)$. This function tends to infinity as $x\to 0$ and belongs to $C_2^0(\R)$. Moreover,
		\[\Lop V(x) = (-\lambda +\frac{\gamma(x)^2}{2x})+ \frac{g(x)}{x}.\]
		which is equivalent at zero to $-\lambda + \sigma^2/2 <0$. Hence there exists $r>0$ such that $\Lop V \leq 0$ for any $x<r$, and moreover we have for any $\varepsilon>0$ and $r>x>\varepsilon$, $V(x)\geq 0$ and $\Lop V (x) <-c_{\varepsilon}<0$. Theorem V.4.2 of~\cite{khasminskii:80} applies and ensures that the solution $\delta_0$ is not stable in probability.

		{\bf Case $\alpha>1$:} For $V(x)=x^2$ around zero, we have:
		\[\Lop V (x) = 2\lambda x^2 +2x g(x)+ \gamma(x)^2\]
		and under the assumption that $g(x)\sim \mu x^{1+\kappa}$ at zero and $\alpha>1$, the term $\Lop V$ is equivalent to $2 \lambda x^2$ and hence locally has the sign of $\lambda$. For $\lambda<0$, we can directly apply theorem V.4.1 of~\cite{khasminskii:80}. For $\lambda>0$, we use $V(x)=-\log(x)$ again and conclude that $0$ is not stable in probability. 
	\end{proof}

	\subsection{Absorption time}
	We know that $X_t=0$ a.s. is a stationary solution of the SDE. It is an absorbing state of the diffusion, i.e. as soon as the solution reaches the singular point, the solution is almost surely identically stuck at the singular point for the whole subsequent evolution. We are now interested in the absorption probability, namely the probability of reaching zero in finite time. We distinguish between the cases $\alpha<1$ and $\alpha\geq 1$. 

	\subsubsection{Case $\alpha<1$}
	\begin{proposition}\label{pro:firsthittingtimezero}
		Under assumptions~\ref{assumption:H1}-\ref{assumption:H5}, the first hitting time $\tau_0$ of zero of the solution of
		\[dx_t=(\lambda x +g(x) )\,dt + \gamma(x)dW_t\]
		 is almost surely finite for any $\alpha<1$ and $\sigma>0$. This is also the case for $\nu=0$ and $\lambda<0$. 
	\end{proposition}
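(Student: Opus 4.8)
The plan is to regard the problem as a one-dimensional diffusion on each of the half-lines $(0,\infty)$ and $(-\infty,0)$ and to classify the boundary $0$ by means of the scale function together with Feller's test, in the same spirit as the non-explosion argument of Proposition~\ref{pro:ExistenceUniqueness}. Since $0$ is absorbing and paths are continuous, a solution started from $x_0>0$ remains in $(0,\infty)$ until the first time $T_0$ it reaches $0$, so it suffices to show that the diffusion restricted to $(0,\infty)$ attains the boundary $0$ in finite time almost surely (the case $x_0<0$ being symmetric under the convention $x^a=-\vert x\vert^a$, and $x_0=0$ being trivial). I keep the scale density $s(y)=p'(y)=\exp\!\big(-2\int_c^y \tfrac{\lambda\xi+g(\xi)}{\gamma^2(\xi)}\,d\xi\big)$ from Proposition~\ref{pro:ExistenceUniqueness}, and I introduce the Feller function $v(x)=\int_c^x p'(y)\big(\int_c^y \tfrac{2\,dz}{p'(z)\gamma^2(z)}\big)\,dy$ of~\cite[Proposition 5.22]{karatzas-shreve:87}.

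First I would record that the process does not escape to $+\infty$. The computation in Proposition~\ref{pro:ExistenceUniqueness} already gives $p(x)\to+\infty$ as $x\to+\infty$ when $\nu>0$, and the same computation carried out with the linear part of the drift handles the remaining case $\nu=0$, $\lambda<0$: there $(\lambda\xi+g(\xi))/\gamma^2(\xi)\sim(\lambda/d^2)\,\xi^{1-2\delta}$ with $\lambda<0$, so the exponent defining $s$ is bounded below and $p(+\infty)=+\infty$ again. Consequently, for $0<a<x_0<b$ one has $\mathbb{P}_{x_0}(T_a<T_b)=(p(b)-p(x_0))/(p(b)-p(a))\to 1$ as $b\to\infty$, so the process reaches every level $a>0$ before escaping; in particular $v(+\infty)=+\infty$.

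Next I would establish accessibility of $0$, i.e. $v(0^+)<\infty$. Near $0$, assumptions~\ref{assumption:H1} and~\ref{assumption:H3} give $(\lambda\xi+g(\xi))/\gamma^2(\xi)\sim(\lambda/\sigma^2)\,\xi^{1-2\alpha}$, which is integrable at $0$ precisely because $1-2\alpha>-1$ for $\alpha<1$; hence $s(0^+)\in(0,\infty)$ is finite and positive. The speed density then behaves like $C\,z^{-2\alpha}$, so the inner integral $\int_c^y \tfrac{2\,dz}{p'(z)\gamma^2(z)}$ grows at most like $y^{1-2\alpha}$ as $y\downarrow 0$ (like $\log y$ at $\alpha=1/2$), making the outer integrand of order $y^{1-2\alpha}$, which is integrable at $0$ exactly when $\alpha<1$. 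This yields $v(0^+)<\infty$. With $v(0^+)<\infty$ and $v(+\infty)=+\infty$, Feller's test~\cite[Proposition 5.22]{karatzas-shreve:87} shows that the diffusion on $(0,\infty)$ is attracted to and attains $0$ in finite time; combined with the non-escape to $+\infty$ established above, this gives $\mathbb{P}_{x_0}(T_0<\infty)=1$, and the half-line $(-\infty,0)$ is identical.

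The delicate point, and \textbf{the main obstacle}, is the regime $\alpha\in[1/2,1)$, which is exactly the range where a strong solution exists. There the speed density $z^{-2\alpha}$ fails to be integrable at $0$, so $0$ carries infinite speed measure and the inner Feller integral genuinely diverges; nevertheless the second integration against the bounded scale density restores convergence of $v$ precisely when $\alpha<1$. This double-integration cancellation --- divergence of the inner integral compensated by integrability of $y^{1-2\alpha}$ --- is what identifies $\alpha=1$ as the sharp threshold for almost sure absorption, and verifying it carefully (including the logarithmic borderline $\alpha=1/2$) is the only step requiring genuine care, the behaviour at $\infty$ being routine given Proposition~\ref{pro:ExistenceUniqueness}.
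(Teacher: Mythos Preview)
Your proposal is correct and follows essentially the same route as the paper: restrict to the half-line $(0,\infty)$, invoke Feller's test, use Proposition~\ref{pro:ExistenceUniqueness} for $p(+\infty)=+\infty$, and verify $v(0^+)<\infty$ by observing that the scale density stays bounded near $0$ (because $\xi^{1-2\alpha}$ is integrable there for $\alpha<1$) while the speed contribution produces an integrand of order $y^{1-2\alpha}$, which is integrable exactly when $\alpha<1$. The only cosmetic difference is that the paper writes $v(x)=\int_c^x(p(x)-p(y))\,m(dy)$ and bounds $\int_y^x e^{2(G(y)-G(z))}\,dz\le e^{2M}(x-y)$ before integrating against $\gamma^{-2}$, whereas you work directly with the iterated-integral form of $v$; the resulting estimate and the threshold $\alpha=1$ are identical.
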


	\begin{proof}
		In order to establish this property, we use a sufficient condition derived from Feller's test for expositions and evidence the almost sure finite exit time of the interval $]0,\infty[$. The finiteness of the exit time of $]0,\infty[$ will readily imply the finiteness of the hitting time of $0$, since the process is almost surely bounded for all finite time  as proved in proposition~\ref{pro:ExistenceUniqueness} under the assumptions of the proposition. 

		Let us define the interval $I=(0,\infty)$, and let $c\in I$ arbitrarily chosen. Under assumption~\ref{assumption:H5}, we have that for all $x\in I$, the diffusion coefficient is always strictly positive, hence the function $(1+\vert \lambda x +g(x)\vert )/\gamma(x)^2$ is locally integrable on $I$. The finiteness of the exit time of the process from the interval $I$ can be established through the analysis of Feller's scale functions. For convenience, we introduce the function:
		\[G(x)=\int_c^x \frac{(\lambda \xi +g(\xi))}{\gamma^{2}(\xi)}d\xi.\]
		Feller's scale functions written as a function of $G$ read:
		\[\begin{cases}
			p(x) &\stackrel{def}{=} \int_c^x \exp\left\{-2G(y)\right\}\,dy\\
			m(dx) & \stackrel{def}{=} \frac{2\,\exp(2\,G(x))}{\gamma^{2}(x)}\,dx\\
			v(x) &\stackrel{def}{=} \int_c^x (p(x)-p(y))\,m(dy)
		\end{cases}\]

		We prove that  (a) $\lim_{x\to\infty} p(x)=\infty$ and (b) $\lim_{x\to 0^+} v(x) < \infty$. The property (a) has been addressed in the proof of proposition~\ref{pro:ExistenceUniqueness}. 
		We hence only need to prove the property (b) which is slightly more subtle. The function $v(x)$ is indeed defined as an integral, which is well behaved except possibly at $x=0$. Close to this point, we have:
		\begin{align*}
			G(y)-G(z)&=\int_{z}^y \frac{\lambda \xi +g(\xi)}{\gamma^{2}(\xi)}d\xi\\
			&\sim_{0^+}\frac{\lambda}{2\sigma^2 (1-\alpha)} (y^{2(1-\alpha)}-z^{2(1-\alpha)})+\frac{\mu}{\sigma^2\phi} (y^{\phi}-z^{\phi})
		\end{align*}
		with $\phi=2+\kappa-2\alpha$ and this function is smooth and bounded close to zero. Let us denote by $M$ the supremum of $\vert G(y)-G(z)\vert$ on $[0,c]$, and by $F(x,y)$ the function defined for $0< y \leq x \leq c $:
		\[F(x,y)=\int_y^x \exp(2(G(y)-G(z)))\,dz \leq e^{2M}(x-y).\]
		It is then clear that:
		\begin{equation*}
			v(x)=2\int_{c}^{x} \frac{F(x,y)}{\gamma^2(y)} \,dy \leq 2\int_{c}^{x} \frac{e^{2M}(x-y)}{\gamma^2(y)}\,dy
		\end{equation*}
		which is integrable at zero since the integrand is of order $x^{1-2\alpha}$ and hence integrable at zero under the condition that $\alpha<1$. By application of Feller's test for explosion~\cite[Prop. 5.32 (iii)]{karatzas-shreve:87} we conclude that if $S$ denotes the exit time of the process from the interval $I$, we have $\mathbbm{P}(S<\infty)=1$. Moreover, we have seen in proposition~\ref{pro:ExistenceUniqueness} that under the assumptions of the proposition, the process is almost surely bounded. We hence conclude that necessarily the process reaches $0$ in finite time and is absorbed (this is also a direct consequence of Feller's test for explosion~\cite[Prop. 5.22]{karatzas-shreve:87} under the condition that $p(x)\to \infty$ when $x\to\infty$ if $p(x)>-\infty$ when $x\to 0^+$, which is a simple consequence of the above estimates).

		Let us now deal with the case $\nu=0$ and $\lambda<0$, we have $G(x)=\lambda x^{2(1-\alpha)} + C$ for some constant $C$. The same reasoning as above ensures that $p(x)$ tends to infinity as $x\to\infty$. Demonstrating that $v(x)$ is bounded at zero did not use the fact that $\nu\neq 0$ and hence applies in the present case. We conclude that $P(S<\infty)=1$, hence the process almost surely reaches $0$ in finite time. 

	\end{proof}

	We hence proved that for $\alpha<1$, the solutions almost surely hit zero in finite time, where they stay trapped for the subsequent dynamics, as soon as the diffusion coefficient $\sigma$ is nonzero. 

	When $\sigma=0$, this is never the case: in the case $\lambda<0$, the solutions of the related ODE converge exponentially fast towards zero but never actually reach it, and in the case $\lambda>0$, the solutions escape from zero exponentially. It is hence of interest to analyze the behavior of the system in the small noise limit. We focus on the case $\lambda>0$ (in which case $0$ is unstable in the noiseless system and a.s. reached in finite time in the stochastic case) and use Freidlin and Wentzell large deviations theory~\cite{freidlin-wentzell:98} to analyze in detail the behavior of the system in the small noise limit. In order to obtain precise quantitative estimates, we will assume that $g(x)=-\mu x^{1+\kappa}$ with $\kappa>0$ and $\mu>0$ and $\gamma(x)=\sigma \vert x \vert^{\alpha}$, corresponding to a double-well potential. Note that the method developed here does not depend on the choice of $g(x)$. 

	\begin{remark}
		The Freidlin and Wentzell theory~\cite{freidlin-wentzell:98} was developed for constant diffusion coefficients. Extensive works have been performed to generalize this result, and for our purposes, we may cite~\cite{puhalskii:04} who showed the validity of Freidlin and Wentzell estimates for continuous and unbounded coefficients with possibly singular diffusion coefficients, assuming that the drift $\lambda x + g(x)$ and $\gamma(x)^2$ Lipschitz-continuous, which is the case under our current assumptions. 
	\end{remark}

	We analyze the properties of the absoption time of the system with initial condition chosen as the fixed point of the noiseless system:
	\begin{equation*}
	\tau_0:=\inf\left\{t>0;\ X_t=0 \quad \vert \quad X_0=\left(\frac{\lambda}{\mu}\right)^{1/\kappa}\right\}
	\end{equation*}
	In the case $\alpha<1$, one can expect from the large deviation theory that $\mathbbm{E}\left[\tau_0\right]$ becomes exponentially large as $\sigma\to 0$, i.e behaves in the leading order as $e^{V/\sigma^2}$ where $V$ is a positive constant. More precisely, we have:
	\begin{proposition}\label{Prop:largedeviations}
	For all $\frac{1}{2}<\alpha<1$ and $\lambda>0$:
	\begin{equation*}
	\displaystyle{\lim_{\sigma \to 0}}\;\;\sigma^2 \log \mathbbm{E}\left[\tau_0\right] = \lambda \left(\frac{\lambda}{\mu}\right)^{2(1-\alpha)/\kappa}\left[\frac 1 {2(1-\alpha)} - \frac 1 {2(1-\alpha)+\kappa}\right] >0
	\end{equation*}
	\end{proposition}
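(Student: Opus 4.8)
The strategy is the classical Kramers--Freidlin--Wentzell picture of metastable escape. For $\lambda>0$ the drift $f(x)=\lambda x-\mu x^{1+\kappa}$ has a stable fixed point at $x_s:=(\lambda/\mu)^{1/\kappa}$ (indeed $f'(x_s)=-\kappa\lambda<0$) and the absorbing origin $0$ is unstable, with $f>0$ on $(0,x_s)$. Started from $x_s$, the process can only reach $0$ if the noise pushes it against the drift all the way down to the origin, a rare event whose mean waiting time should grow like $e^{V/\sigma^2}$, with $V$ the quasipotential separating $x_s$ from $0$. I therefore expect $\sigma^2\log\Exp[\tau_0]\to V$, and the problem reduces to identifying and evaluating $V$. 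Two routes are available: invoke the Freidlin--Wentzell large deviation principle, legitimate for the singular Hölder diffusion thanks to Puhalskii's extension recalled in the Remark, together with the standard transfer from the rate functional to mean-exit-time asymptotics; or---my preferred, self-contained route, reusing the machinery of Section~\ref{sec:existence}---work directly from the exact one-dimensional representation of $\Exp[\tau_0]$ via the scale and speed measures.

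Following the second route, since $0$ is accessible (absorbing) for $\alpha<1$ by Proposition~\ref{pro:firsthittingtimezero} and $+\infty$ is inaccessible by the restoring drift of assumption~\ref{assumption:H2}, the mean hitting time from $x_s$ has the representation $\Exp[\tau_0]=\int_0^{x_s}\big(\int_x^{\infty} m(dy)\big)\,dp(x)$ in terms of the scale function $p$ (with $dp=e^{-2G}dx$) and speed measure $m$ of Section~\ref{sec:existence}, where $G$ is as defined there. Writing $\Phi:=\sigma^2 G$ for the $\sigma$-independent potential $\Phi(x)=\int_c^x\frac{\lambda\xi+g(\xi)}{|\xi|^{2\alpha}}\,d\xi$, the product $dp(x)\,m(dy)$ is exponentially large in $1/\sigma^2$, governed by $\Phi(y)-\Phi(x)$. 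Here $\Phi$ is finite at the origin precisely because $\alpha<1$ (its integrand behaves like $\lambda\xi^{1-2\alpha}$ near $0$), and it is increasing on $(0,x_s)$, decreasing beyond $x_s$, hence maximal at $x_s$. Laplace's method then localises the exponential growth at the maximiser of $\Phi(y)-\Phi(x)$ over $0\le x\le x_s\le y$, which is the corner $(x,y)=(0,x_s)$, so the exponential rate is the potential gap $V=\Phi(x_s)-\Phi(0)$.

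It then remains to evaluate this elementary integral. I compute
\[
V=\int_0^{x_s}\big(\lambda\,\xi^{1-2\alpha}-\mu\,\xi^{1+\kappa-2\alpha}\big)\,d\xi
=\frac{\lambda\,x_s^{2(1-\alpha)}}{2(1-\alpha)}-\frac{\mu\,x_s^{2+\kappa-2\alpha}}{2(1-\alpha)+\kappa},
\]
and, using $\mu x_s^{\kappa}=\lambda$ to write $\mu x_s^{2+\kappa-2\alpha}=\lambda x_s^{2(1-\alpha)}$ together with $x_s^{2(1-\alpha)}=(\lambda/\mu)^{2(1-\alpha)/\kappa}$, this collapses to the announced $\lambda(\lambda/\mu)^{2(1-\alpha)/\kappa}\big[\tfrac{1}{2(1-\alpha)}-\tfrac{1}{2(1-\alpha)+\kappa}\big]$. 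Positivity is immediate, since $\kappa>0$ forces $\tfrac{1}{2(1-\alpha)}>\tfrac{1}{2(1-\alpha)+\kappa}$.

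The delicate step, and the one I would treat most carefully, is the rigorous Laplace asymptotics rather than the algebra. The maximiser $x=0$ is a boundary point where $\Phi(x)-\Phi(0)\sim\frac{\lambda}{2(1-\alpha)}x^{2(1-\alpha)}$ is only Hölder of exponent $2(1-\alpha)<1$---a cusp, not a smooth quadratic saddle---so the textbook Laplace expansion does not apply verbatim. Fortunately only the exponential rate is needed, so matching bounds suffice: for the upper bound I would use the global inequality $\Phi(y)-\Phi(x)\le V$ and control the remaining prefactor integral, which converges exactly under the hypotheses, since $\int^{\infty}\xi^{-2\alpha}d\xi<\infty$ requires $\alpha>\tfrac12$ and $\int_0\xi^{1-2\alpha}d\xi<\infty$ requires $\alpha<1$; for the lower bound I would restrict the double integral to a shrinking neighbourhood of $(0,x_s)$, letting it shrink after $\sigma\to0$. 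One must also confirm that the far field $y\to\infty$ is exponentially subdominant, which follows from $\Phi(y)\to-\infty$, itself a consequence of assumption~\ref{assumption:H2}; and care is required in tracking the normalisation constant of the large-deviation rate, the only subtle point in matching the constant. Should one instead follow the Freidlin--Wentzell route, the obstacle shifts to verifying the hypotheses of the large deviation principle for this singular diffusion and to the technical passage from the rate functional to the logarithmic equivalent of $\Exp[\tau_0]$.
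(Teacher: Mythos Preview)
Your approach is correct but follows a genuinely different route from the paper. The paper's proof is a three-line application of black-box large-deviation machinery: it invokes Theorem 5.7.11 of Dembo--Zeitouni (legitimised for this degenerate diffusion by Puhalskii's result mentioned in the Remark preceding the proposition), which reduces the exponential rate to the quasipotential $U(0)$ obtained from the Hamilton--Jacobi equation $(\lambda x-\mu x^{1+\kappa})U'+\tfrac{1}{2}x^{2\alpha}(U')^2=0$ with $U(x_s)=0$. Solving gives $U(x)=\int_x^{x_s}(\lambda u-\mu u^{1+\kappa})u^{-2\alpha}\,du$, and evaluating at $0$ yields the stated constant. In other words, the paper takes exactly the first of the two routes you outline, and does no Laplace analysis whatsoever.

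Your preferred route---the exact scale/speed representation of $\Exp[\tau_0]$ followed by Laplace asymptotics---is more self-contained and avoids importing Freidlin--Wentzell theory for singular coefficients, at the cost of the boundary-cusp Laplace estimates you correctly flag as the delicate step. Both routes land on the same one-dimensional integral $\int_0^{x_s}(\lambda\xi-\mu\xi^{1+\kappa})\xi^{-2\alpha}\,d\xi$, and your algebraic reduction of that integral is identical to the paper's. One point to watch: with $\Phi=\sigma^2 G$ one has $dp(x)\,m(dy)\propto\exp\!\bigl(2(\Phi(y)-\Phi(x))/\sigma^2\bigr)$, so a naive Laplace reading gives $2(\Phi(x_s)-\Phi(0))$ rather than $\Phi(x_s)-\Phi(0)$; you acknowledge that ``tracking the normalisation constant'' is the subtle point, and indeed it is---this factor has to be reconciled with the convention in the statement (the paper's Hamilton--Jacobi solution also suppresses a factor of $2$ relative to the non-trivial root $U'=-2b/a^2$), so whichever route you finalise, pin this down explicitly.
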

	\begin{proof}
	By application \cite[Theorem 5.7.11.]{dembo-zeitouni:10}, the problem is reduced to compute at the boundary $x=0$ the quasipotential $U(x)$, solution of the Hamilton-Jacobi equation:
	\begin{equation}
	(\lambda x-\mu x^{1+\kappa})U'(x)+\frac{x^{2\alpha}}{2}(U'(x))^2 = 0
	\end{equation}
	with boundary condition $U((\frac{\lambda}{\mu})^{\frac 1 \kappa})=0$. That is, one has to compute:
	\begin{equation}
	U(x)=\int_x^{\left(\frac{\lambda}{\mu}\right)^{1/\kappa}} \frac{\lambda u-\mu u^{1+\kappa}}{u^{2\alpha}}du
	\end{equation}
	Simple algebra yields the expression:
	\begin{equation}
	U(0)=\lambda \left(\frac{\lambda}{\mu}\right)^{2(1-\alpha)/\kappa}\left[\frac 1 {2(1-\alpha)} - \frac 1 {2(1-\alpha)+\kappa}\right]
	\end{equation}
	\end{proof}

	As a consequence, for $\alpha<1$ fixed, the aborption time becomes exponentially large, which explains the transition between the regime $\sigma>0$ (where the system a.s. hits zero) and $\sigma=0$. A critical boundary of this domain is the point $\alpha=1$: in that case, the system a.s. never hits zero in finite time as shown in section~\ref{sec:NotReachZero}.

	In particular, proposition~\ref{Prop:largedeviations} ensures that the convergence as $\alpha\to 1^-$ with $\lambda>0$ fixed displays a super-exponential time behavior:
	$$\displaystyle{\lim_{\alpha \to 1^-} \lim_{\sigma \to 0}} \quad  \sigma^2 \log\mathbbm{E}\left[\tau_0\right] = \infty $$

	Going deeper into the analysis of the above convergence allows to understand more precisely what happens in the critical parameter region where both $\alpha \to 1^-$ and $\lambda \to 0^+$, in the limit of small noise $\sigma\to 0$. Such a ``blow up'' reveals the following picture around the point $(\alpha,\lambda,\sigma)=(1,0,0)$ in parameter space:

	\begin{corollary}\label{Prop:blowup}
	Assume that $\lambda \to 0^+$ and $\alpha \to 1^-$ such that 
	\begin{equation}\label{constraint}
	\frac{\lambda}{(1-\alpha)} \to c \in [0,\infty]
	\end{equation}
	Then, one has the following three regimes:
	\begin{itemize}
	\item if $c=0$, the absorption time is sub-exponential : $$\displaystyle{\lim_{\alpha \to 1,\lambda \to 0} \lim_{\sigma \to 0}} \quad \sigma^2 \log\mathbbm{E}\left[\tau_0\right] =0$$
	\item if $0<c<\infty$, the absorption time is exponential : $$\displaystyle{\lim_{\alpha \to 1,\lambda \to 0} \lim_{\sigma \to 0}} \quad  \sigma^2 \log\mathbbm{E}\left[\tau_0\right] =\frac{c}{{2\mu^{2(1-\alpha)/\kappa}}}$$
	\item if $c=\infty$, the absorption time is super-exponential : $$\displaystyle{\lim_{\alpha \to 1,\lambda \to 0} \lim_{\sigma \to 0}} \quad  \sigma^2 \log\mathbbm{E}\left[\tau_0\right] = \infty $$
	\end{itemize}
	where $\displaystyle{\lim_{\alpha \to 1,\lambda \to 0}}$ is constrained by Eq. (\ref{constraint}).
	\end{corollary}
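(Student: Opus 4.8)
The plan is to exploit Proposition~\ref{Prop:largedeviations}, which gives the exact limit
\[
V(\alpha,\lambda) := \lim_{\sigma\to 0}\sigma^2\log\mathbbm{E}[\tau_0] = \lambda\left(\frac{\lambda}{\mu}\right)^{2(1-\alpha)/\kappa}\left[\frac{1}{2(1-\alpha)}-\frac{1}{2(1-\alpha)+\kappa}\right],
\]
and then take the constrained double limit $\alpha\to 1^-$, $\lambda\to 0^+$ with $\lambda/(1-\alpha)\to c$. Since Proposition~\ref{Prop:largedeviations} already evaluates the inner limit $\sigma\to 0$ for each fixed admissible $(\alpha,\lambda)$, the corollary is purely a matter of computing $\lim V(\alpha,\lambda)$ along the prescribed path. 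First I would set $\epsilon = 1-\alpha\to 0^+$ so that the constraint reads $\lambda/\epsilon\to c$, and rewrite the bracket by placing the two fractions over a common denominator:
\[
\frac{1}{2\epsilon}-\frac{1}{2\epsilon+\kappa} = \frac{\kappa}{2\epsilon(2\epsilon+\kappa)}.
\]
This isolates the singular factor $1/\epsilon$ that will combine with $\lambda$ to produce the finite constant $c$.

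Substituting the common-denominator form, I would write
\[
V = \lambda\left(\frac{\lambda}{\mu}\right)^{2\epsilon/\kappa}\cdot\frac{\kappa}{2\epsilon(2\epsilon+\kappa)} = \frac{\lambda}{\epsilon}\cdot\frac{\kappa}{2(2\epsilon+\kappa)}\cdot\left(\frac{\lambda}{\mu}\right)^{2\epsilon/\kappa}.
\]
Now the three factors are analyzed separately as $\epsilon\to 0^+$. The first factor $\lambda/\epsilon\to c$ by hypothesis. The second factor $\kappa/(2(2\epsilon+\kappa))\to \kappa/(2\kappa)=1/2$. The third factor is the delicate one: $(\lambda/\mu)^{2\epsilon/\kappa}=\exp\left(\tfrac{2\epsilon}{\kappa}\log(\lambda/\mu)\right)$, and since $\lambda=\lambda(\epsilon)$ may tend to $0$, the argument of the logarithm blows up. I would control this via $\lambda = (\lambda/\epsilon)\cdot\epsilon \approx c\,\epsilon$ when $c\in(0,\infty)$, so that $\log(\lambda/\mu)\sim \log\epsilon + \log(c/\mu)$ and the exponent behaves like $\tfrac{2\epsilon}{\kappa}\log\epsilon\to 0$; hence the third factor tends to $1$. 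Combining, $V\to c\cdot\tfrac12\cdot 1 = c/2$, and since the exponent $2\epsilon/\kappa\to 0$ the prefactor $\mu^{-2\epsilon/\kappa}\to 1$, which matches the stated limit $c/(2\mu^{2(1-\alpha)/\kappa})$ (the remaining $\mu$-dependence being an artifact of the finite-$\alpha$ expression that also tends to $1$).

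For the boundary cases I would argue as follows. When $c=0$, the factor $\lambda/\epsilon\to 0$ dominates while the other two factors stay bounded (the third factor again tending to $1$ since $\lambda\to 0$ forces the exponent to vanish), giving $V\to 0$. When $c=\infty$, the first factor $\lambda/\epsilon\to\infty$ while the second tends to $1/2$ and the third to $1$, so $V\to\infty$. The main obstacle, and the only step requiring genuine care, is the third factor $(\lambda/\mu)^{2\epsilon/\kappa}$ in the degenerate regimes: one must verify that $\epsilon\log\lambda\to 0$ uniformly along every admissible path, including paths where $\lambda$ decays much faster than $\epsilon$ (e.g.\ $c=0$ with $\lambda$ exponentially small in $\epsilon$). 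I would handle this by noting that the large-deviation principle underlying Proposition~\ref{Prop:largedeviations} requires $\lambda>0$ and $\alpha>\tfrac12$ for each fixed point on the path, so $\log\lambda$ grows at most polynomially slower than any prescribed rate; a short argument bounding $|\epsilon\log\lambda|$ along sequences consistent with~\eqref{constraint} closes this gap and establishes that the third factor tends to $1$ in all three regimes, completing the proof.
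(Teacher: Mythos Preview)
Your approach is essentially the same as the paper's: both start from the quasipotential $V(\alpha,\lambda)$ of Proposition~\ref{Prop:largedeviations} and analyze its asymptotics as $\epsilon=1-\alpha\to 0^+$, $\lambda\to 0^+$. Your factorization $V=(\lambda/\epsilon)\cdot\frac{\kappa}{2(2\epsilon+\kappa)}\cdot(\lambda/\mu)^{2\epsilon/\kappa}$ is in fact a bit cleaner than the paper's series expansion, and the cases $c\in(0,\infty)$ and $c=\infty$ are handled correctly (for $c=\infty$ you have $\epsilon=o(\lambda)$, hence $\epsilon|\log\lambda|\le\lambda|\log\lambda|\to 0$, so the third factor indeed tends to~$1$).

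There is, however, a genuine slip in your treatment of $c=0$. The claim in your last paragraph that ``the third factor tends to $1$ in all three regimes'' is false: the constraint $\lambda/\epsilon\to 0$ allows $\lambda$ to decay arbitrarily fast, e.g.\ $\lambda=e^{-1/\epsilon}$, for which $\epsilon\log\lambda=-1$ and $(\lambda/\mu)^{2\epsilon/\kappa}\to e^{-2/\kappa}\ne 1$; with $\lambda=e^{-1/\epsilon^2}$ the third factor even tends to~$0$. Your appeal to ``$\log\lambda$ grows at most polynomially slower than any prescribed rate'' has no content here --- Proposition~\ref{Prop:largedeviations} imposes no restriction on how fast $\lambda\to 0$ along the sequence. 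Fortunately the fix is simpler than what you attempt: once $\lambda<\mu$ the third factor lies in $(0,1]$, so for $c=0$ you only need boundedness, not convergence to~$1$, and $(\lambda/\epsilon)\cdot\tfrac12\cdot(\text{bounded})\to 0$. Drop the last paragraph and keep the parenthetical ``stay bounded'' argument you already wrote; that is all that is needed.
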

	\begin{proof} 
		This proposition is a consequence of the analysis of the different asymptotic regimes of the critical exponent $c^*$ obtained in proposition~\ref{Prop:largedeviations}:
		\[c^*=\lambda \left(\frac{\lambda}{\mu}\right)^{2(1-\alpha)/\kappa}\left[\frac 1 {2(1-\alpha)} - \frac 1 {2(1-\alpha)+\kappa}\right].\]
		The leading term in the considered limit is proportional to $\lambda^{1+2(1-\alpha)/\kappa}/(1-\alpha)$. A series expansion provides the following expression:
		\[c^*\sim \frac 1 {2\mu^{2(1-\alpha)/\kappa}} (\frac{\lambda}{\alpha-1} + \frac{2}{\kappa}\lambda \log(\lambda))+O(\lambda\log(\lambda)^2(1-\alpha)). \]
		The different regimes identified in the corollary immediately follow from the analysis of the limits of this exponent as $\lambda\to 0^+$ and $\alpha\to 1^-$ subject to the condition~\eqref{constraint}.
		\end{proof}
	\begin{remark}
		Note that the regimes do not depend on $\mu$ or $\kappa$ qualitatively and the property is purely local: as $\lambda$ goes to zero, the integration range becomes increasingly small. The property is hence valid for equation~\eqref{eq:generalg} with function $g(x)$ equivalent at zero to $-\mu x^{1+\kappa}$. However, the asymptotic exponential divergence rate quantitatively depends on the local behavior of the flow close to zero, as observed in the case $0<c<\infty$.
	\end{remark}

	From a dynamical systems viewpoint, this type of solutions corresponds to a relatively sharp convergence towards the equilibrium. Indeed, the dynamics stops at the almost sure hitting time of the singular point and the whole subsequent dynamics is deterministic and constant. This will not be the case for more regular diffusion coefficients ($\alpha\geq 1$) as we now show.

	\subsubsection{For $\alpha\geq 1$ the solution a.s. never reaches $0$}\label{sec:NotReachZero}

	We now deal with the case $\alpha\geq 1$ and show that in contrast to the case $\alpha<1$, even when $\delta_0$ is stable in probability or almost surely exponentially stable, the solutions of the SDE almost surely never reaches zero. 
	\begin{thm}\label{thm:NotTouchingZero}
		Let $\alpha\geq 1$. Then for any $x_0\in\R$ we have 
		\[\mathbbm{P}\big[\forall t\geq 0\;,\;\; x_t\neq 0 \big]=1\]
		That is, almost any sample path starting from a non-zero initial condition will never reach zero.
	\end{thm}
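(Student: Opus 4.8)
The plan is to reuse the scale-function/speed-measure machinery set up in the proof of Proposition~\ref{pro:firsthittingtimezero}, but now to prove the \emph{opposite} inequality. Since the paths are continuous and $0$ is absorbing, a trajectory started at $x_0>0$ can reach $0$ only by exiting the interval $I=(0,\infty)$ at its left endpoint, and symmetrically for $x_0<0$ (the convention $x^a=-\vert x\vert^a$ makes the two sides mirror each other). It therefore suffices to show that $0$ is an \emph{inaccessible} boundary for the diffusion restricted to $(0,\infty)$. With $G$, the scale function $p$ and the speed measure $m$ exactly as in Proposition~\ref{pro:firsthittingtimezero}, Feller's test~\cite[Prop.~5.22 and 5.32]{karatzas-shreve:87} states that $0$ is attainable if and only if $v(0^+)<\infty$, where $v(x)=\int_x^c\big(p(y)-p(x)\big)\,m(dy)\ge 0$. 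Thus the whole theorem reduces to proving $v(0^+)=+\infty$; note that this is the reverse of the estimate $v(0^+)<\infty$ obtained for $\alpha<1$.

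First I would rewrite $v$ as a double integral over the simplex $\{x\le z\le y\le c\}$, namely
\[v(x)=\int_x^c\!\!\int_x^{y}\frac{2\,e^{2(G(y)-G(z))}}{\gamma^2(y)}\,dz\,dy,\]
using $p'(z)=e^{-2G(z)}$ and $m(dy)=2\gamma^{-2}(y)e^{2G(y)}\,dy$. The key difficulty is that near $0$ the two factors $p'$ and the speed density are reciprocal exponentials of $G$, and for $\alpha>1$ one of them blows up while the other vanishes super-exponentially (recall $G'(\xi)=(\lambda\xi+g(\xi))/\gamma^2(\xi)\sim(\lambda/\sigma^2)\,\xi^{1-2\alpha}$ with $1-2\alpha\le-1$), so crude bounds on each factor separately are useless. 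The device I would use is to discard all of the simplex except a thin diagonal strip $z\in[y-\delta(y),\,y]$, on which $G(y)-G(z)$ stays bounded. Since $\vert G(y)-G(z)\vert\approx\vert G'(y)\vert\,(y-z)\approx(\vert\lambda\vert/\sigma^2)\,y^{1-2\alpha}(y-z)$, the choice $\delta(y)=C\sigma^2\vert\lambda\vert^{-1}y^{2\alpha-1}$ keeps $\vert G(y)-G(z)\vert\le C$; because $\alpha\ge1$ one has $\delta(y)\le y/2$ for small $y$, so the strip is admissible. On it $e^{2(G(y)-G(z))}\ge e^{-2C}$, whence
\[v(x)\ \ge\ \int_x^c\frac{2e^{-2C}}{\gamma^2(y)}\,\delta(y)\,dy\ \asymp\ \frac{1}{\vert\lambda\vert}\int_x^c\frac{dy}{y}\ \xrightarrow[x\to0^+]{}\ +\infty,\]
the gain being that the strip extracts a \emph{universal} logarithmically divergent integrand $\asymp 1/y$, independent of the sign of $\lambda$.

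Two book-keeping points remain. When $\lambda=0$ the leading drift near $0$ is $\mu x^{1+\kappa}$ (assumption~\ref{assumption:H1}); in the regime where this drift is too weak to make $G$ blow up, $p'$ is bounded above and below and one argues more directly, using that the speed density $\asymp y^{-2\alpha}$ is non-integrable at $0$ for $\alpha\ge1$, so that $v(x)\gtrsim\int_x^c p'(z)\big(\int_z^c y^{-2\alpha}\,dy\big)dz\gtrsim\int_0 z^{1-2\alpha}\,dz=\infty$; the strip argument covers the drift-dominated regime. Either way $v(0^+)=+\infty$, so Feller's test makes $0$ inaccessible from $(0,\infty)$, and by symmetry from $(-\infty,0)$; hence $\mathbb{P}_{x_0}[\tau_0<\infty]=0$ for every $x_0\ne0$, which is the claim. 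The main obstacle is precisely the $\alpha>1$ balance in the second paragraph: the super-exponential growth/decay of the two factors forces the diagonal-strip localization, and getting the width $\delta(y)\asymp y^{2\alpha-1}$ right is what turns the competition into the clean $1/y$ lower bound.
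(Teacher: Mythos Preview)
Your Feller-test approach is correct and genuinely different from the paper's proof. The paper does \emph{not} return to the scale/speed machinery; instead it runs a short Lyapunov supermartingale argument: for $\alpha\ge 1$ both drift and diffusion are locally Lipschitz and vanish at $0$, so on any ball $\{|x|\le\theta\}$ one has $|\lambda x+g(x)|\vee\gamma(x)\le K_\theta|x|$; taking $V(x)=|x|^{-1}$ gives $\Lop V\le K_\theta(1+K_\theta)V$, and applying It\^o to $e^{-K_\theta(1+K_\theta)t}V(x_t)$ up to the exit time of $(\varepsilon,\theta)$ yields $\mathbb{P}(B)\le \varepsilon|x_0|^{-1}e^{K_\theta(1+K_\theta)T}\to 0$, contradicting attainability. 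That argument is shorter, needs no case split on the sign of $\lambda$, and uses only the qualitative fact that the coefficients are locally Lipschitz through $0$ (not the precise exponent $\alpha$ in~\ref{assumption:H3}). Your route, by contrast, stays inside the Feller framework of Proposition~\ref{pro:firsthittingtimezero} and shows directly that the same integral $v(0^+)$ that was finite for $\alpha<1$ becomes infinite for $\alpha\ge 1$; the diagonal-strip localization $\delta(y)\asymp y^{2\alpha-1}$ is the right device to tame the super-exponential cancellation when $\lambda<0$, and the resulting $\int dy/y$ lower bound is clean. The cost is the extra bookkeeping you note (the $\lambda=0$ split, checking $\delta(y)\le y/2$), whereas the paper's proof sidesteps all of it with a single Lipschitz bound.
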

	\begin{proof}
		The result is already known for $\alpha=1$ (see e.g.~\cite{mao:08}). We extend the demonstration to our general case. Denote by $\tau$ the first hitting time of zero of the process with initial condition $x_0\neq 0$ at $t=0$. Assuming that the conclusion is false, we can find a pair of constants $T>0$ and $\theta>1$ sufficiently large such that $P(B)>0$ where $B$ is defined as:
		\[B=\{\tau\leq T \text{ and } \vert x(t) \vert \leq \theta-1 \text{ for all } t \leq \tau \}\]
		Moreover, under the assumption $\alpha \geq 1$, both the drift and diffusion functions are locally Lipschitz-continuous, hence there exists a positive constant $K_{\theta}$ such that:
		\[\vert \lambda x + g(x) \vert \vee \gamma(x) \leq K_{\theta} \vert x \vert \quad \text{ for all } \vert x\vert \leq \theta,\; t\leq T.\]
		Let now $V(x):=\vert x \vert^{-1}$. We have for $0<\vert x \vert \leq \theta $ and $ t \leq T$
		\begin{align*}
			\Lop V (x) &\leq K_{\theta}(1+K_{\theta}) V(x)
		\end{align*}
		Let us now define $\tau_{\varepsilon}=\inf\{t\geq 0;\; \vert x_t\vert \notin (\varepsilon, \theta)\}$. This is a stopping time, so by It\^o's formula and the optimal sampling theorem with the bounded stopping time $\tau_{\varepsilon} \wedge T$, we have:
		\begin{align*}
			&\Exp{[e^{- K_{\theta}(1+K_{\theta})(\tau_{\varepsilon} \wedge T)} V(x_{\tau_{\varepsilon \wedge T}})]} = V(x_0) +\\
			&\qquad  + \Exp{\Big[\int_{0}^{\tau_{\varepsilon} \wedge T} e^{- K_{\theta}(1+K_{\theta})\,s} [-K_{\theta}(1+K_{\theta})V(x_s)+(\Lop V)(x_s)]\,ds\Big]}\\
			&\leq \vert x_0\vert^{-1}
		\end{align*}
		We know that for any $\omega \in B$, the time $\tau_{\varepsilon}$ is smaller than $T$ and moreover that $x_{\tau_{\varepsilon}}=\varepsilon$ by definition of $B$, hence we have:
		\[\vert x_0 \vert^{-1}\geq \Exp{[e^{- K_{\theta}(1+K_{\theta})(\tau_{\varepsilon} \wedge T)} V(x_{\tau_{\varepsilon \wedge T}})]}  \geq \Exp{\Big[e^{- K_{\theta}(1+K_{\theta})T} \varepsilon^{-1}\mathbbm{1}_{B}\Big]}\]
		i.e. 
		\[\mathbbm{P}(B)\leq \varepsilon \vert x_0 \vert^{-1} e^{ K_{\theta}(1+K_{\theta})T}\]
		This inequality is true whatever $\varepsilon$ (we recall that the definition of $B$ did not involve $\varepsilon$), hence valid in particular when $\varepsilon$ is arbitrarily small, and necessarily $\mathbbm{P}(B)=0$ which contradicts the definition of $B$. 
	\end{proof}

	\subsection{Stationary and quasi-stationary solutions}
	We are now interested in identifying possible stationary solutions. We already mentioned that the null stochastic process (distributed as a Dirac delta measure at $0$) is always a stationary solution of the system. Moreover, for $\alpha<1$, we have shown that the process is almost surely absorbed at zero in finite time. This property prevents from the existence of any stationary solution, and as far as the permanent dynamics is concerned, it is perfectly characterized by the absorption property. In order to further describe the behavior of the solutions, one may wonder how the solutions that did not yet reach zero are distributed. Stationary solutions of the process conditioned on never hitting zero are called quasi-stationary distributions:
\begin{definition}\label{def:QSD}
	{A quasi-stationary distribution $p$ for the process $(x_t)_{t\geq 0}$ is a probability measure supported on $\mathcal{I}=(0,\infty)$ or $(-\infty,0)$ satisfying, for $x_0\stackrel{\textrm{law}}{=} p$ and all $t\geq 0$
	\[\mathbbm{P}(x_t \in A \; \big \vert \; \tau_0>t) = p(A) \qquad \forall A\in \textrm{Borel}\,(\mathcal{I})\]
	}
\end{definition} 
In the case $\gamma(x)=\sigma \vert x \vert^{\alpha}$ with $\alpha<3/4$, we can show that there exists a quasi-stationary distribution that attracts exponentially fast the conditioned process. In detail, we have:
	\begin{proposition}\label{pro:QSD}
	Let us assume that $\nu>0$, $g$ is differentiable, $g'$ is bounded at zero and smaller than $C x^{2\beta}$ for some $C\in\R$ at infinity, that assumptions~\ref{assumption:H1}-\ref{assumption:H5} hold and $\alpha\in[1/2,3/4)$. The process conditioned on never reaching zero and with a bounded initial distribution converges exponentially fast towards a probability measure $p_{\pm}$ for any positive (resp. negative) initial condition. These are called quasi-stationary solutions, or Yaglom limit. 	
	\end{proposition}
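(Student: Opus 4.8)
The plan is to treat this as a one–dimensional Yaglom–limit problem and reduce it to the spectral theory of a self–adjoint operator. By the symmetry of the hypotheses under $x\mapsto -x$ (recall the convention $x^a=-|x|^a$ for $x<0$), it suffices to construct $p_+$ for positive initial data, working with the diffusion on $I=(0,\infty)$ killed at $0$; the measure $p_-$ is obtained identically. From Propositions~\ref{pro:ExistenceUniqueness} and~\ref{pro:firsthittingtimezero}, under $\nu>0$ and \ref{assumption:H1}--\ref{assumption:H6} the process is non–exploding and a.s.\ absorbed at $0$ in finite time, and a Feller classification (as in the proof of Proposition~\ref{pro:firsthittingtimezero}: the scale density satisfies $s(0^+)\in(0,\infty)$, so $0$ is accessible, while $\int_0 m=\infty$ for $\alpha\geq 1/2$, so $0$ is not an entrance) shows that $0$ is an \emph{exit} boundary whereas $\infty$ is inaccessible. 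This is exactly the configuration in which a quasi-stationary distribution is expected.

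The next step is to study the sub-Markovian evolution through its density. Writing $u_t$ for the density of $X_t\,\mathbbm{1}_{t<\tau}$, it solves the forward equation $\partial_t u_t=\Lop^\ast u_t$ on $I$, with $\Lop^\ast u=\tfrac12(\gamma^2 u)''-((\lambda x+g)u)'$ and an absorbing condition at $0$. I would symmetrise $\Lop^\ast$ into a self-adjoint Schr\"odinger-type operator $H$ via the usual ground-state transform built from the scale and speed densities $s,m$, so that the killed semigroup is self-adjoint on the relevant Hilbert space and a quasi-stationary density corresponds to the bottom eigenfunction of $\Lop^\ast n=-\theta_1 n$. Equivalently, one may first apply the Lamperti change of variable $y=\int^x \mathrm{d}u/\gamma(u)$ to reduce to unit diffusion and invoke directly the quasi-stationarity theorem of~\cite{cattiaux-collet-etal:09}; either route reduces the problem to the spectral analysis of a one-dimensional Schr\"odinger operator.

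I would then establish that $H$ has compact resolvent, hence purely discrete spectrum $0<\theta_1<\theta_2\le\cdots$ with a genuine gap $\theta_2-\theta_1>0$ and a positive principal eigenfunction. Confinement — divergence of the effective potential at the far end, ruling out essential spectrum from $\infty$ — follows from $\nu>0$ together with $\gamma\sim d\,x^\delta$ and $g'\le C x^{2\beta}$. The decisive point is the absorbing boundary: a local analysis of $\Lop^\ast n=-\theta_1 n$ near $0$ (with $g(x)\sim \mu x^{1+\kappa}$ and $g'$ bounded, so that $g$ is subdominant) yields the two indicial behaviours $n\sim x^{-2\alpha}$ and $n\sim x^{1-2\alpha}$; the quasi-stationary density is the latter, $p_\pm(x)\sim C\,x^{1-2\alpha}$. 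This is integrable at $0$ for \emph{every} $\alpha<1$ — so a QSD is always a finite measure — but it is square-integrable, hence a bona fide ground state of $H$ in the Hilbert space, precisely when $\int_0 x^{2(1-2\alpha)}\,\mathrm{d}x<\infty$, that is exactly when $\alpha<3/4$. This is where the hypothesis $\alpha\in[1/2,3/4)$ enters, matching the complementary assertion that no attracting QSD survives for $\alpha\in(3/4,1)$.

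With the ground state in hand, I would normalise $p_\pm$ to a probability density, verify that $\Lop^\ast p_\pm=-\theta_1 p_\pm$ indeed makes it quasi-stationary, and expand $u_t$ in the orthonormal eigenbasis: the conditioned law $u_t/\|u_t\|_{1}$ converges to $p_\pm$, the spectral gap $\theta_2-\theta_1$ giving the exponential rate, with a bounded initial distribution handled by the same expansion (its projection onto the principal mode dominates). The main obstacle lies entirely in the previous paragraph and is twofold: proving compactness of the resolvent and isolating $\theta_1$ below the essential spectrum, which requires converting the global hypotheses \ref{assumption:H2}, \ref{assumption:H4}, \ref{assumption:H6} and the bound on $g'$ into a quantitative confinement estimate at infinity; and making rigorous that $\alpha<3/4$ is \emph{exactly} the threshold for the principal eigen-density to belong to the Hilbert space and for the spectral expansion, started from a Dirac mass, to be dominated by the principal mode up to the absorbing boundary. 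The remaining steps are routine once $H$ is known to be self-adjoint with discrete spectrum and a spectral gap.
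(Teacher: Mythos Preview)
Your proposal does contain the paper's proof: you explicitly mention the Lamperti change of variable $y=\int^x du/\gamma(u)$ followed by an appeal to~\cite{cattiaux-collet-etal:09}, and that is precisely what the paper does. The paper carries this route out in full: after the transform $X_t=x_t^{1-\alpha}/(\sigma(1-\alpha))$ one lands on $dX_t=dB_t-q(X_t)\,dt$, and the paper then verifies, one by one, the hypotheses of Cattiaux et al.\ --- finiteness of the absorption time, $q^2-q'\to\infty$ at both ends (giving discrete spectrum of the associated Schr\"odinger operator), and the integrability conditions that guarantee exponential attraction of the Yaglom limit from bounded initial data. The binding condition is
\[
\int_0^1 \frac{e^{-G(y)}}{q^2(y)-q'(y)+\Delta+2}\,dy<\infty,
\]
whose integrand near $0$ behaves like $y^{2-\alpha/(1-\alpha)}$, yielding exactly $\alpha<3/4$.

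Where your ``direct'' spectral route goes wrong is in the identification of the threshold. You claim that $\alpha<3/4$ is the condition for the QSD density $n\sim x^{1-2\alpha}$ to lie in the Hilbert space of the symmetrised operator $H$, and you test this via $\int_0 x^{2(1-2\alpha)}\,dx$. But $L^2(dx)$ is not the Hilbert space in which either $\Lop$, $\Lop^\ast$, or their Schr\"odinger conjugate is self-adjoint. If one symmetrises $\Lop$ in $L^2(m)$ (speed measure $m'\sim x^{-2\alpha}$), or equivalently passes to the unit-diffusion Schr\"odinger picture on $L^2(dy)$, the principal eigenfunction is square-integrable near the exit boundary for \emph{every} $\alpha<1$; the spectrum is discrete already under the confinement condition $q^2-q'\to\infty$, and the ground state exists with no further restriction on $\alpha$. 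The threshold $3/4$ is \emph{not} an $L^2$-membership condition for the ground state; it is the extra integrability condition in~\cite{cattiaux-collet-etal:09} that controls the \emph{domain of attraction} of the QSD (convergence from Dirac, hence compactly supported, initial laws). That your integral $\int_0 n^2\,dx=\int_0 x^{2-4\alpha}\,dx$ reproduces the correct exponent is a numerical coincidence produced by the specific local asymptotics $\tilde n(y)\sim y$ and $q^2-q'\sim c/y^2$, not by the mechanism you describe. So to make the argument rigorous you should drop the $L^2(dx)$ heuristic and instead verify the Cattiaux et al.\ conditions directly after the Lamperti transform --- which is exactly the paper's approach.
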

	\begin{proof}
	We apply a result due to Cattiaux and collaborators~\cite{cattiaux-collet-etal:09} in the domain of diffusion models in population dynamics. In their article, they consider a diffusion of type:
		\[dX_t=dB_t-q(X_t)\,dt\]
		under a few conditions on $q$. Our problem can be brought to their framework changing variables and considering: $X_t=x_t^{1-\alpha}/(\sigma(1-\alpha))$. It\^o's formula allows putting our equations in that form, with 
		\[q(x)= - \frac 1 \sigma x^{-\alpha} f(x) + \frac{\alpha}{2(1-\alpha)} \frac 1 x\]
		and where $f(x)=\lambda x +g(x)$ and $Z=({\sigma(1-\alpha)}{x})^{1/(1-\alpha)}$. Under the conditions that (i) the first hitting time of zero is almost surely finite and (ii) that the conditions: $\Delta=-\inf_{y\in (0,\infty)} q^2(y)-q'(y)<\infty$ and $\lim_{y\to\infty} q^2(y)-q'(y)=\infty$, they show that the spectrum of the Kolmogorov backward equation (for the related operator $\Lop$) has a purely discrete spectrum with no zero eigenvalue. Moreover, they show that a rescaled version of the related eigenfunction is integrable and exponentially attract all initial conditions under the conditions:
		\begin{equation}\label{eq:ConditionAmaury1}
			\int_0^1 \frac{1}{q(y)^2-q'(y)+\Delta+2} e^{-G(y)}\,dy<\infty
		\end{equation}
		with $G(y)=\int_1^x 2q(y)\,dy$,
		\begin{equation}\label{eq:ConditionAmaury2}
			\int_1^{\infty} e^{-G(y)} dy<\infty \; , \; \int_0^1 x e^{-G(x)/2} dx < \infty \; \text{and}\; \int_1^{\infty}e^{G(y)}\int_y^{\infty}e^{G(z)}\,dzdy<\infty
		\end{equation}

		In order to apply their result, we demonstrate that all these conditions are satisfied in our framework. First of all, under the condition $\alpha<1$ and $\nu>0$, we showed that the process $x_t$ solution of~\eqref{eq:generalg} almost surely reach zero in finite time (proposition~\ref{pro:firsthittingtimezero}). This is hence also the case of $X_t$ since $(1-\alpha)>0$ in that case. Moreover, the derivative of the function $q(x)$ reads:
		\[q'(x)=\alpha Z^{-1}f(Z)-f'(Z)-\frac {\alpha}{2(1-\alpha)}\frac 1 {x^2}.\]
		Around zero, because of our assumptions on $g$, we have: 
		\[q(x)\sim \frac{\alpha}{2(1-\alpha)}\frac 1 x \; \text {and}\; q'(x)\sim -\frac{\alpha}{2(1-\alpha)}\frac 1 {x^2}\]

		and hence 
		\[q^2(x)-q'(x)\sim_{x\to 0^+} \frac{\alpha(2-\alpha)}{4(1-\alpha)^2}\;\frac 1 {x^2}\]
		which tends to infinity as $x\to 0^+$.
		At $x = \infty$, we have:
		\[q^2(x)-q'(x) \geq \frac{\mu^2}{\sigma^2} (1-\varepsilon) Z^{2(1-\alpha)+2\beta}\]
		for any $\varepsilon>0$ and hence tends to infinity when $x\to \infty$. In particular, $\Delta=-\inf_{y\in (0,\infty)} q^2(y)-q'(y)<\infty$

		Let $G(x)=\int_1^x 2\,q(y)\,dy$. We now show that the condition~\eqref{eq:ConditionAmaury1} is satisfied. At infinity, $q(x)$ is greater or equal to $C' x^{1+\beta/(1-\alpha)}$ with $C'>0$. It is then easy to show that $\lim_{x\to\infty} \frac{G(x)}{x}=\infty$. Moreover, 
		we have
		\[\quad A:=\lim_{x\to 0^+}\Big(G(x)-\frac{\alpha}{1-\alpha}\log(x)\Big) \in (-\infty,\infty).\]
		Therefore, near zero, the integrand involved in the expression of $Q(x)$ is proportional to $y^{2-\frac{\alpha}{1-\alpha}}$. Under the condition $\alpha<\frac 3 4 $, condition~\eqref{eq:ConditionAmaury1} holds. 
		Showing that condition~\eqref{eq:ConditionAmaury2} is valid is straightforward at this point: $G$ grows at least linearly at $\infty$ ensuring that $\int_1^{\infty} e^{-G(y)} dy<\infty$. Moreover, around zero, the function $x e^{-G(x)/2}$ is of order $x^{1-\frac 1 2 \frac{\alpha}{1-\alpha}}$ which is integrable at zero for $\alpha<\frac 4 5$, hence in particular for $\alpha<\frac 3 4 $. The main results of~\cite{cattiaux-collet-etal:09} (namely their theorem 5.2, proposition 5.5, corollaries 6.1 and 6.2) apply, and ensure the existence and uniqueness of a quasi-stationary distribution attracting exponentially fast the law of the diffusion conditioned on not hitting zero with initial laws having bounded support. 
	\end{proof}

\begin{remark}
	The above proposition ensures existence of a quasi-stationary solution for $\alpha<3/4$, using a sufficient condition for existence of such measures, hence does not preclude existence of quasi-stationary distributions for $\alpha\in[3/4,1)$. Numerical explorations of such system tend to show that the bound $\alpha=3/4$ is actually optimal. For instance, Figure~\ref{fig:QSD} shows that the solutions to the stochastic subcritical pitchfork equation:
	\begin{equation}\label{eq:subpitch}
		dx_t=(\lambda x-x^3)\,dt+\sigma x_t^{\alpha}dW_t
	\end{equation}
	do present invariant distributions of the process conditioned on not hitting zero for $\alpha<3/4$, but for $\alpha>3/4$, the distribution does not reach a stationary state but tends to approach the stable solution $\delta_0$ (this is also what happens for $\alpha=1$ when no absorption is present but an a.s. exponential convergence occurs).
	
	\begin{figure}[h]
		\centering
			\subfigure[$\alpha=0.6$]{\includegraphics[width=.4\textwidth]{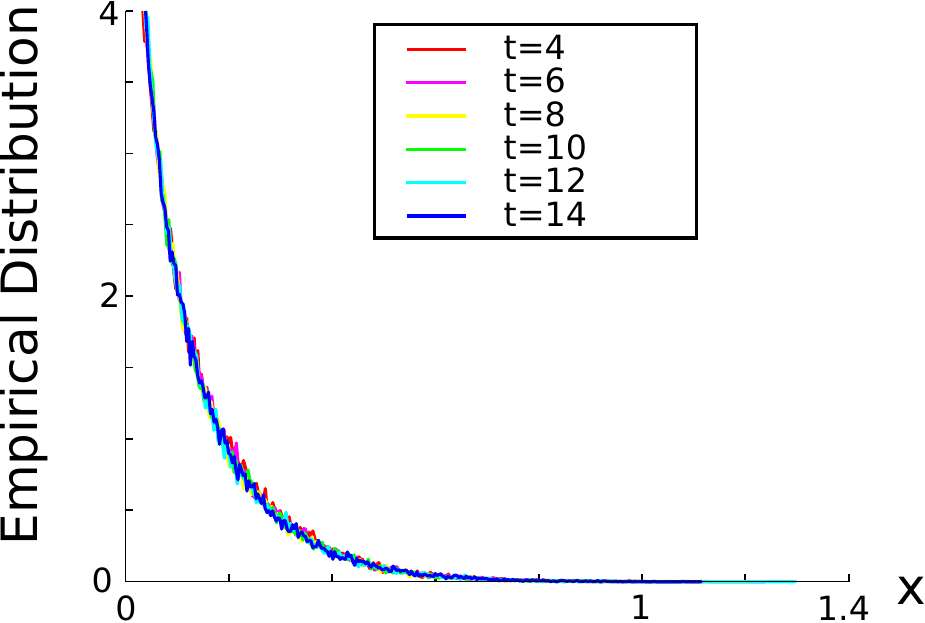}}\qquad
			\subfigure[$\alpha=0.85$]{\includegraphics[width=.4\textwidth]{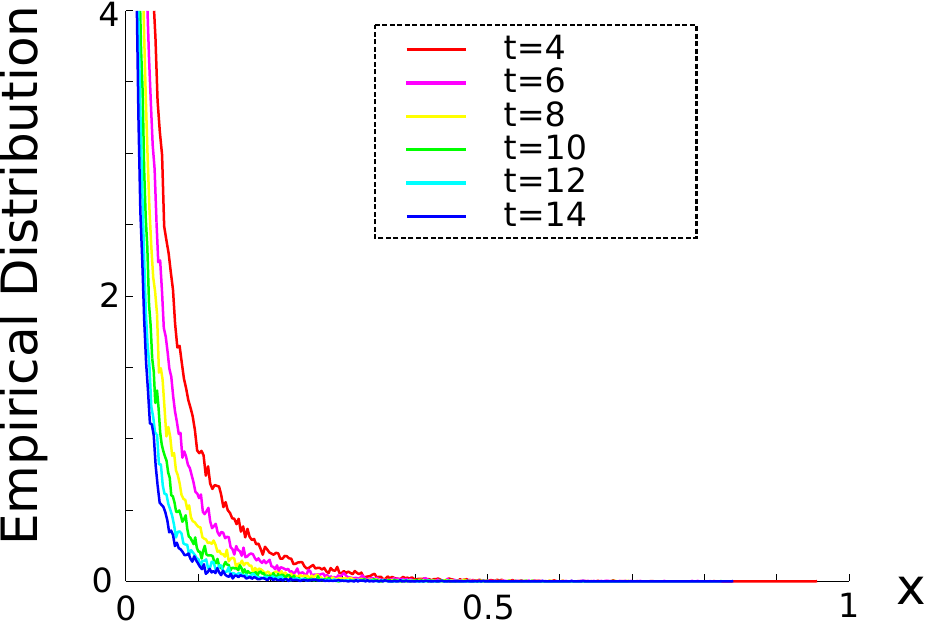}}
		\caption{Distribution (ordinate truncated for legibility)of the trajectories that did not reach zero in the subcritical pitchfork model~\eqref{eq:subpitch} with $\lambda=-0.5$, $\sigma=0.5$ different values of $\alpha$, computed at $6$ times. (a) $\alpha=0.6$: stabilization on a quasi-stationary distribution that do not depend on time, (b) $\alpha=0.85$: the repartition varies as a function of time and all trajectories approach zero, arguing for the non-existence of quasi-stationary distribution. Simulations were done using a particle method with $100\,000$ particles, using the Euler-Maruyama scheme with $dt=0.01$.}
		\label{fig:QSD}
	\end{figure}
	
\end{remark}

	Let us now deal with the general case with diffusion coefficient $\gamma(x)$ and $\alpha\geq 1$. In that case, the process is almost surely never absorbed at zero. Possible stationary solutions with smooth probability density function $p$ with respect to Lebesgue's measure satisfy the forward Kolmogorov (or Fokker-Planck) equation:
	\begin{equation}\label{eq:Kolmo}
		\der{}{x} \left( (\lambda\,x+g(x))p(x) - \frac 1 2 \der{}{x}\left(\gamma(x)^2 p(x)\right)\right) = 0.
	\end{equation}
	i.e.
	\[(\lambda\,x+g(x)-\gamma'(x)\gamma(x))p(x)-\frac{1}{2}\gamma(x)^{2}p'(x)=K\]
	for some $K\in\R$. If we can find solutions to these equations subject to the conditions $p(x)\geq 0$ for all $x\in\R$ and $\int_{\R}p(x)\,dx=1$ (\emph{acceptable solutions}), then these are probability density functions of stationary solutions of the diffusion equation. For simplicity, we consider that assumption~\ref{assumption:H5} is valid, i.e. $\gamma(x)$ only vanishes at $x=0$. In that case, since $\delta_0$ is a solution to the SDE~\eqref{eq:generalg}, stationary solutions will either have support on $\R^+$ or $\R^-$. In the case where the dynamics has additional singular points, the Kolmogorov equation will define solutions in each interval delimited by the singular points (see section~\ref{sec:SN}). The Kolmogorov equation is a linear ordinary differential equation with constant source term. The homogeneous solutions (with $K=0$) is given by:
	\[p_0(x)=C\gamma^{-2}(x)\exp\left(2\int_{1}^x (\frac{\lambda y}{\gamma(y)^2} + \frac{g(y)}{\gamma(y)^{2}})\,dy\right)\]
	defined for $x\neq 0$. Particular solutions are given by:
	\begin{multline}\label{eq:Solution1}
		p_K(x)=\gamma^{-2}(x)\left(K \int_1^x \exp\left(-2\int_1^y \frac{2\lambda z}{\gamma(z)^2} + 2\frac{g(z)}{\gamma(z)^{2}}\,dz\right)dy + C\right)\\
		\times \exp\left(\int_1^x (\frac{2\lambda y}{\gamma(y)^2} + 2\frac{g(y)}{\gamma(y)^{2}})\,dy\right)
	\end{multline}

	We have the following:
	\begin{proposition}\label{pro:Station}
		In addition to the stationary distribution $\delta_0$, we have:
		\begin{itemize}
			\item For $\alpha=1$ and:
			\begin{itemize}
				\item $\lambda<\sigma^2/2$ there exists no integrable solution of the Kolmogorov equation~\eqref{eq:Kolmo}
				\item {$\lambda> \sigma^2/2$ there exists two invariant probability distributions $Z_+ p_0(x)\ind{x>0}$ and $Z_-\,p_0(x)\ind{x<0}$ (where $Z^+$ and $Z^-$ generically denote normalization constants) under assumption~\ref{assumption:H4}.
				}
			\end{itemize}
			\item For $\alpha>1$, {the same results are found but the bifurcation arises at $\lambda=0$}
		\end{itemize}
	\end{proposition}
	\begin{remark}
	In the case $\alpha<1$, we already mentioned that no stationary solution can exist. We can show using the expressions of the solutions to the Kolmogorov equation that no solution is integrable.	
	\end{remark}
	\begin{proof}

		For $\alpha>1$, let us analyze the integrability of $p_0(x)$. Around $0$, the function $x^{-2\alpha}$ is not integrable. The leading term of the exponential around zero is given by $\frac{\lambda}{\sigma^2(1-\alpha)} x^{2(1-\alpha)}$, hence diverges at zero. If $\lambda<0$, this term tends to $+\infty$ and is not integrable, and if $\lambda>0$, the exponential term tends to zero and the function is hence integrable at $0$ and $p(0)=0$. For $x\to\infty$, we distinguish two cases:  
		\begin{itemize}
			\item {for $\nu>0$, the leading term of the exponential in the expression of $p_0(x)$ is given by $-2\nu y^{1+\beta}/\gamma^2(y) \leq -2\nu/d$, and therefore the map
			\[\varphi(x)=\exp\left(2\int_{1}^x \frac{\lambda y + g(y)}{\gamma^{2}(y)}\,dy\right)\]
			has a limit at infinity. For $A$ large enough so that $\lambda x + g(x)<-1$ for all $x\geq A$, we have:
			\begin{multline*}
				\int_A^x \frac{1}{\gamma^{2}(y)}\exp\left(2\int_1^y \frac{\lambda z +g(z)}{\gamma^{2}(z)} \,dz\right)\,dy \\
				\leq 
				\int_A^x -2\frac{\lambda y + g(y)}{\gamma^{2}(y)}\exp\left(2\int_1^y \frac{\lambda z +g(z)}{\gamma^{2}(z)}  \,dz\right)\,dy
			\end{multline*}
			which is nothing but an integral of the derivative of $\phi$, hence bounded on $\R_+$ since $\varphi$ is bounded. Therefore, we conclude on the integrability of $p_0(x)$ at infinity, completing the proof that $p_0$ defines a non-trivial invariant probability distribution. }
			\item {for $\nu<0$, we have noted that $g(x)/\gamma^2(x)$ is integrable at infinity under assumption~\ref{assumption:H4}, ensuring that the exponential term is upperbounded, hence integrability of $p_0$ relies of integrability of $\gamma^{-2}(x)$ at infinity, which is always the case under our assumptions.}
		\end{itemize}
		We hence proved that acceptable solutions related to $K=0$ exist for $\lambda>0$ and no solution existed when $\lambda<0$. Moreover, there is no possible choice of $K\neq 0$ that can overcome the exponential divergence at zero. Indeed, the only possible choice of $K$ would be the only constant preventing divergence at zero, i.e.:
			\[p(1)=2\frac{K}{\sigma^2} \int_11^0 \exp\left(-2\int_1^y \frac{2\lambda z}{\gamma(z)^2} + 2\frac{g(z)}{\gamma(z)^{2}}\,dz\right)dy.\]
			However, the integral term of the righthand side diverges, hence the above relationship cannot be satisfied and there is no acceptable solution for $\lambda<0$. Eventually let us remark that for $\lambda>0$, any choice of $K$ prevents integrability at infinity. 

		For $\alpha=1$, we again analyze the integrability properties of $p_0(x)$. Around $x=0$, $p_0$ behaves like $\frac{x^{\frac{2\lambda}{\sigma^2}}}{\gamma(x)^{2}}\exp\left(2\int_.^x \frac{g(y)}{\gamma(y)^{2}}\,dy\right)$. The term in the exponential behaves like $\frac{\lambda}{\sigma^2} y^{\kappa}$. The integrability at zero hence depends on the exponent $-2(1-\frac{\lambda}{\sigma^2})$: $p_0$ is integrable if the exponent is strictly larger than $-1$, i.e. $2\lambda/\sigma^2>1$. In the case $2\lambda/\sigma^2<1$, the non-integrability is due to a polynomial divergence, and hence might be compensated by a suitable choice of $K$. The only possible choice of $K$ compensating the divergence at zero would correspond to: 
		\[p(1)=2K \int_1^0 y^{-2\lambda/\sigma^2}\exp\left(-2\int_1^y \frac{g(z)}{\gamma(z)^2 }\,dz\right)dy,\]
		and here again the integral diverges at zero, and hence there is no such solution.

		At infinity, the exact same analysis as done in the case $\alpha>1$ applies, and hence we obtain that $p_0$ is integrable at $\infty$ under assumption~\ref{assumption:H4}.
	\end{proof}

	\subsection{Heuristic discussion}
	We hence proved that as a function of the value of the coefficient $\alpha$, the system can be in one of two substantially different regimes: in the case $\alpha<1$ the singular point is always stable, and reached in finite time, and for $\alpha>1$ the stability of the singular point is not affected by the presence of noise and only depends on the stability of the fixed point in the noiseless system. The case $\alpha=1$ constitutes the transition between these two regimes, and in that case the stability of the singular point depends both on the eigenvalue of the Jacobian matrix of the drift at the singular point and on the level of noise. 

	In order to understand heuristically these stabilization and destabilization phenomena, let us focus on the behavior of the system in the neighborhood of the singular point. In the case $\alpha>1$, the diffusion coefficient vanishes faster than the drift coefficient at the singular point, and the system locally behaves as if there were no noise in the system. In particular, the stability of the singular point remains unchanged from that of the noiseless system. On the contrary, when $\alpha<1$, the diffusion coefficient 
	vanishes slower than the drift. When the system approaches the singular point, it is hence mainly driven by random fluctuations related to the diffusion coefficient. When the system is brought away from the singular point, noise increases, and when it is brought towards the singular point noise decreases. This is at the origin of a ratchet-like phenomenon: consider to points $A$ and $B$ close of the singular point, $A$ being closer than $B$. The time it takes to go from $A$ to $B$ with power diffusion coefficient is way larger than the time needed to go from $B$ to $A$. This behavior of the noise close of zero explains the stability in probability, and acts as a weak drift by `stabilizing' the solutions when they approach the singular point. The intermediate case $\alpha=1$ is precisely the transition where these two effects compensate. When the noise coefficient is large enough compared to the eigenvalue of the Jacobian matrix at the singular point, the noise effects described for $\alpha<1$ dominate and stabilize the fixed point, and on the contrary the deterministic phenomena govern the dynamics similarly to the case $\alpha>1$. 

	\section{Applications}\label{sec:applications}
	The detailed characterization of the solutions of equation~\eqref{eq:generalg} has several implications in applied mathematics. We focus here on generic descriptions of the dynamics of one-dimensional SDEs in the flavor of bifurcations theory. We will specifically discuss the behavior of SDEs locally reducible (in a sense that we make precise) to the pitchfork or saddle-node bifurcation with power diffusion coefficients. 

	\subsection{Analysis of the stochastic pitchfork bifurcation with H\"older diffusion}\label{sec:Pitch}
	In this section we first study the dynamics of an SDE with drift given by the normal form of the pitchfork bifurcation with H\"older diffusion coefficients, before addressing the universality of these behaviors. This problem was widely addressed in the theory of random dynamical system by several authors~\cite{crauel:99,arnold:98} in the case of multiplicative noise ($\alpha=1$ in our notations). We extend these results to the cases $\alpha\neq 1$, and show that actually the multiplicative noise case is a singular transition case when varying $\alpha$. 
	
	\paragraph{Dynamics of the canonical stochastic pitchfork equation}
	We consider the supercritical stochastic pitchfork bifurcation with power diffusion. 
	\begin{equation}\label{eq:PitchAlpha}
		dx_t=(\lambda x_t-x_t^3)\,dt + \sigma \vert x_t\vert^{\alpha}dW_t
	\end{equation}
	for $x_t\in\R$, with initial condition $X_0$ at $t=0$. In this equation, $\lambda$ is a real parameter, $\sigma$ is a non-negative parameter and $\alpha\geq \frac 1 2 $. In the case $\sigma=0$, it is well known that the solution $x=0$ is a stable fixed point for any $\lambda<0$, and unstable for $\lambda >0$, and two additional stable equilibria $\pm \sqrt{\lambda}$ exist in the region $\lambda >0$ (see e.g.~\cite{guckenheimer-holmes:83}).

	For $\sigma\neq 0$, the model clearly satisfies assumptions~\ref{assumption:H1}-\ref{assumption:H5} with $\nu=-\mu=1>0$, $\kappa=\beta=2$, $d=\sigma$ and $\delta=\alpha$. Moreover, it satisfies the additional assumptions used in proposition~\ref{pro:QSD} since $g(x)=-x^3$ is differentiable with bounded derivative at zero and divergence at infinity upperbounded by $x^4$. The analysis of the model is therefore a direct application of section~\ref{sec:general}. 
	
	The problem was widely analyzed in the literature in the case $\alpha=1$ (see e.g.~\cite{arnold:98,crauel:99}). In this multiplicative noise case, it was proved that:
	\begin{itemize}
		\item for $\lambda<\sigma^2/2$, $\delta_0$ is almost surely exponentially stable and no additional stationary solution exist. 
		\item for $\lambda \geq \sigma^2/2$, $\delta_0$ is unstable in probability, and two stationary solutions stable in the first approximation exist. 
	\end{itemize}
	Therefore, in the multiplicative noise case, stability of zero depends on the level of noise. 
	
	Our analysis shows that this is a singular phenomenon that disappears upon variation of the H\"older exponent $\alpha$: 
		\begin{itemize}
			\item For $\alpha<1$, the fixed point $\delta_0$ is stable in probability. Moreover, any solution almost surely reaches zero in finite time. Quasi-stationary distributions exists for $\alpha\in (1/2,3/4)$.
			\item for $\alpha>1$ and: 
			\begin{itemize}
				\item $\lambda<0$, $\delta_0$ is stable in probability and no other stationary solution exist. 
				\item $\lambda >0$, $\delta_0$ unstable in probability and stationary solutions exist.
			\end{itemize}
		\end{itemize}

	Let us now further describe the dynamics of the non-trivial solutions as a function of $\alpha$.

	\begin{itemize}
		\item {\bf Case $\mathbf{\alpha=1}$:} It was observed in the literature (see e.g.~\cite[Chap. 9]{arnold:98} for a comprehensive account) that for any $\lambda>0$, the unstable deterministic fixed point becomes asymptotically exponentially stable when the noise parameter $\sigma$ is large enough.  For $\lambda>\frac{\sigma^2}{2}$, two symmetrical stationary distributions appear. For $\frac{\sigma^2}{2}<\lambda\leq \sigma^2$ the stationary distribution concentrates at zero and has a non increasing density diverging at zero. For $\lambda \geq \sigma^2$, the probability density function of the stationary distribution vanishes at zero and has a unique maximum reached for $x=\pm \sqrt{\lambda -\sigma^2}$ (see Figure~\ref{fig:PitchNoise}). There is hence a qualitative transition at $\lambda=\sigma^2$, or $P$-bifurcation. In comparison with the deterministic bifurcation, the loss of stability is delayed and noise tends to stabilize the saddle point.

		\item{\bf Case $\mathbf{\alpha>1}$:} the deterministic picture is qualitatively and quantitatively recovered: for $\lambda<0$, $\delta_0$ is stable in probability and is the unique stationary solution, and for $\lambda>0$, $\delta_0$ is unstable in probability, two additional stationary solutions appear, presumably stable as shown for $\alpha=2$. 
		The stationary probability density functions for $\lambda>0$ vanish at zero and reach a maximum at $X_m$ the solution of 
		\[\left( {\sigma }^{2}\alpha -\lambda \,X^{2-2\,\alpha }+X^{-2\,\alpha +4} \right)=0. \]
		The behavior of $x_m$ when $\lambda$ is close to zero follows one of the two regimes: 
		\begin{itemize}
			\item for $\alpha<2$, $X_m\sim \Big(\frac{\lambda}{\alpha\sigma^2}\Big)^{1/(2\alpha-2)}$. For instance for $\alpha=\frac{3}{2}$ we get  $X_m=-\frac{3}{4}\,{{\sigma}}^{2}+\frac 1 4\,\sqrt {9\,{{\sigma}}^{4}+16\,{\lambda}}
			$
			\item for $\alpha>2$, $X_m\sim \sqrt{\lambda}$ and is insensitive to noise parameters $\sigma$ and $\alpha$. For $\alpha=3$ we obtain the explicit form $X_m=\frac 1 {\sqrt{6}} \frac{\sqrt{-1+\sqrt{1+12\sigma^2\lambda}}}{\sigma}$
			\item for $\alpha=2$, this maximum can be computed explicitly, and is reached for $x=\sqrt{\frac{\lambda}{1+2\sigma^2}}$
		\end{itemize}
		Surprisingly, in the limit $\lambda\to 0$ there is a discontinuity in the behavior of $X_m$ as a function of $\alpha$ at $\alpha=2$. Nevertheless, when $\lambda>0$ this discontinuity is smoothed out. We conclude that in the case $\alpha>2$, the behavior of the system is close from the one of the deterministic pitchfork bifurcation and the stationary distribution escapes zero as a square root of the parameter. But when $1<\alpha<2$, this is no more the case: the scaling of distribution peak behaves as $\lambda^{1/(2\alpha-2)}$, hence much slower than $\sqrt{\lambda}$. As $\sigma$ is increased, the peak of the stationary distribution gets closer to zero. 
		\begin{figure}[!h]
			\centering
				\includegraphics[width=.7\textwidth]{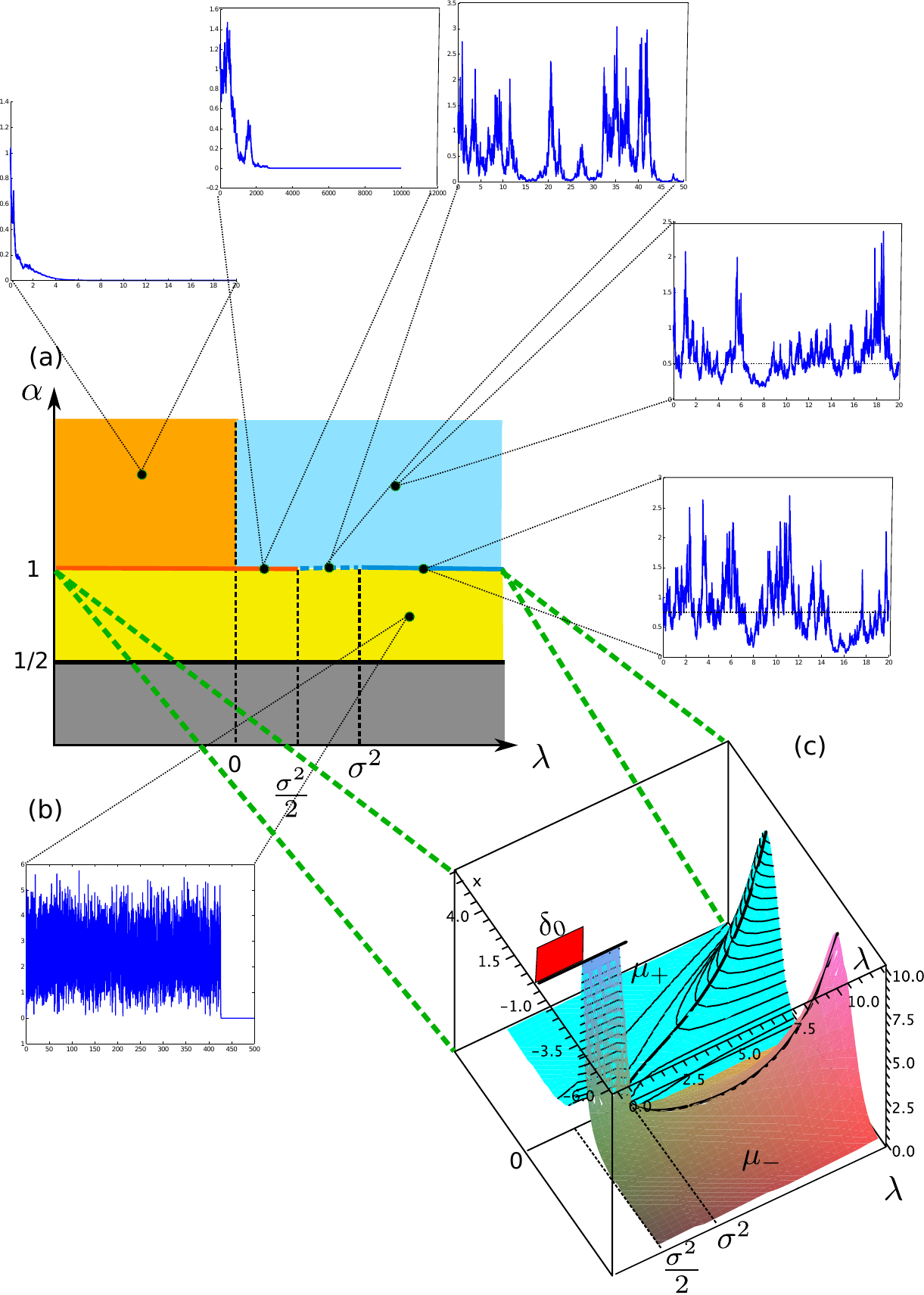}
			\caption{(a) Codimension 2 stochastic bifurcation diagram of the pitchfork bifurcation: orange: zero is stable, blue: zero is unstable, yellow: zero is stable and is reached in finite time. (b) Sample path trajectories in all 6 regimes for the stochastic pitchfork bifurcation. Parameters $(\lambda,\alpha,\sigma)$: (b1) $(10,0.6,1)$ (quasi-stationary distribution plotted in Fig.~\ref{fig:QSD}(a)) (b2): $(2,1,1.2)$, (b3): $(1,1.5,1.2)$, (b4): $(0.5,1,1.2)$, (b5): $(-1,1.5.1.2)$. In (b2) and (b3) the peak of the stationary distribution is depicted in dashed black. (c) Representation of stationary densities for the stochastic pitchfork system with linear noise, as a function of the parameter $\lambda$. Red square: $\delta_0$ is stable. Blue and colored surfaces display the probability distribution $p_{\lambda}(x)$ of the stable stationary distributions, and the black line represents the maximal value of the distribution.}
			\label{fig:PitchNoise}
		\end{figure}
		
		\item {\bf Case $\mathbf{\alpha<1}$:} the solution $\delta_0$ is always stable in probability. This result can appear relatively surprising at first sight. Indeed, in the deterministic case, $\lambda$ is the exponential rate of divergence from the solution $0$. However, adding a (possibly small) diffusion term proportional to $\vert x \vert^{\alpha}$ with $\alpha<1$ stabilizes $\delta_0$ in probability whatever the value of the noise intensity $\sigma\neq 0$. This observation, added to the fact that the solution is not exponentially asymptotically stable (though stable in probability) raises the question of how this convergence occurs. Figure \ref{fig:PitchNoise}(b1) presents a typical sample path of the process. Starting from a positive initial condition, we observe that the solution is evolving in the half-plane $x>0$ and does not show any absorption clue. However, it suddenly reaches zero where it is absorbed after that random transient phase. This perfectly illustrates a typical behavior of the solutions of the pitchfork equation for $\alpha<1$: the absorption time was shown to be almost surely finite. Moreover, trajectories that did not hit zero are distributed according to a quasi-stationary distributions as long as $\alpha<3/4$.
	\end{itemize}
	
	\paragraph{Universality properties}
	Similarly to the analysis of nonlinear dynamical systems, the analysis of the pitchfork equation reveals universal features of a wide class of systems. We formally show the universality property of the pitchfork bifurcation for general SDE on $\R$:
		\[dx_t=f(x_t,\lambda)\,dt+\gamma(x_t)dW_t\]
		with a smooth $f$ (at least three times continuously differentiable). We consider the dynamics of the system for parameter values $\lambda$ in a neighborhood of $\lambda_0\in\R$. Let us assume that:
		\begin{itemize}
			\item there exists $\nu>0$ and $\kappa>0$ such that for all $\lambda$ in a neighborhood of $\lambda_0$, $f(x,\lambda)\leq -\nu x^{1+\kappa}$,
			\item for any $\lambda$, $f$ is an odd function: $f(-x,\lambda)=-f(x,\lambda)$
			\item $\frac{\partial^3 f}{\partial x^3} (0,\lambda_0)\neq 0$
		\end{itemize}
	Defining $\xi_t=\beta (\lambda) x_t$ with $\beta(\lambda)=\sqrt{\frac 1 6 \left \vert \frac{\partial^3 f}{\partial x^3}(0,\lambda)\right \vert}$, we have:
	\[d\xi_t = \left(\derpart{f}{x}(0,\lambda)\xi_t+\varepsilon \xi_t^3 + \Psi(\xi_t,\lambda)\right)\,dt + \beta(\lambda) \gamma\left(\frac{\xi_t}{\beta(\lambda)}\right)\,dW_t.\]
	where $\Psi(x,\lambda)=O(x^5)$ and $\varepsilon=\textrm{sign} (\frac{\partial^3 f}{\partial x^3}(0,\lambda))$ which is well defined around $\lambda=\lambda_0$.

		Let us now assume that $\gamma(x)\sim \sigma \vert x \vert^{\alpha}$ around $x=0$ and $\gamma(x)>0$ for $x\neq 0$. We can hence apply the results developed in the previous section to characterize the dynamics of this general system. Pitchfork-like transitions occur at $\lambda=\lambda_0$ when:
		\begin{itemize}
			\item $\alpha>1$, $\derpart{f}{x}(0,\lambda_0)=0$ and $\frac{\partial^2f}{\partial x\partial \lambda}(0,\lambda_0)\neq 0$ or
			\item $\alpha=1$, $\derpart{f}{x}(0,\lambda_0)=\frac{\sigma^2}{2}$ and $\frac{\partial^2f}{\partial x\partial \lambda}(0,\lambda_0)\neq 0$. 
		\end{itemize}

	\subsection{The stochastic saddle-node bifurcation}\label{sec:SN}
	Another prominent universal codimension one bifurcation of equilibria in deterministic ODEs is the \emph{saddle-node (or fold) bifurcation}. It corresponds to the normal form $\dot{x}=-x^2+a$ (sometimes considered  $x^2-a$ which is equivalent to the normal form considered here  through a change of time). This dynamical system has no equilibrium when $a<0$ (solutions blows up in finite time) and two equilibria for $a>0$: $\sqrt{a}$ which is stable and $-\sqrt{a}$ which is unstable. The system has multiple singular points, and these depends on the bifurcation parameter. Choices of diffusion functions vanishing only at $\sqrt{a}$ or at $-\sqrt{a}$ enter the application domain of the general theory developed in section~\ref{sec:general}. We will consider a slightly different case in this section.
	\paragraph{Dynamics of a canonical stochastic saddle-node equation} Let us consider now a case, contrasting with the previous cases treated,  where the diffusion function vanishes at both singular points:
	\begin{equation}\label{eq:SaddleNode}
		dX_t=(-X_t^2+a) \,dt + \sigma \vert X_t^2-a \vert^{\alpha}\,dW_t
	\end{equation}
	For $a>0$, there are two stationary distributions corresponding to the equilibria $\sqrt{a}$ and $-\sqrt{a}$. Denoting $y^+_t = X_t-\sqrt{a}$ (vanishing at $X=\sqrt{a}$) and  $y^-_t = X_t+\sqrt{a}$, it is easy to show that these variable satisfy the equations:
	\[\begin{cases}
		dy^+_t=(-2\sqrt{a} y^+_t -(y_t^+)^2)\,dt + \sigma\vert-2\sqrt{a} y^+_t -(y_t^+)^2\vert^{\alpha}\,dW_t \\
		dy^-_t=(2\sqrt{a} y^-_t -(y_t^+)^2)\,dt + \sigma\vert 2\sqrt{a} y^+_t -(y_t^+)^2\vert^{\alpha}\,dW_t
	\end{cases}\]
	In both case, the drift and diffusion coefficients satisfy the assumptions~\ref{assumption:H1}, ~\ref{assumption:H2}, ~\ref{assumption:H3} and~\ref{assumption:H4} and obviously assumption~\ref{assumption:H5} is not valid. Assumptions~\ref{assumption:H1}, ~\ref{assumption:H2}  and~\ref{assumption:H4} are trivial since the function corresponding to $g(x)$ in the general cases is equal to $-x^2$, hence $\mu=-\nu=-1<0$ and $\beta=\kappa=1$. The diffusion  coefficients denoted $\gamma(x)$ in the general case satisfy for both the equation on $y^+$ and on $y^-$: $\gamma(x)\sim_{x\to 0^+} \sigma (2\sqrt{a})^{\alpha}x^{\alpha}$, ensuring that assumption~\ref{assumption:H3} is satisfied, and $\gamma(x)\sim_{x\to\infty} \sigma \vert y\vert^{2\alpha}$ ensuring that assumption~\ref{assumption:H4} is satisfied with $d=\sigma$ and $\delta=2\alpha$.

	In the deterministic case, for $a>0$ and an initial condition smaller than $-\sqrt{a}$ or for any initial condition when $a<0$, the solutions blow up in finite time to $-\infty$, and in usual applications additional, higher order confining terms prevent the blow up. Interestingly, in the present case, the presence of noise can prevent blow up as we now show:
	\begin{proposition}\label{pro:NoBlowUp}
		The solutions of the stochastic saddle-node equation~\eqref{eq:SaddleNode} are defined for all times or blow up in finite time under the conditions:
		\begin{itemize}
			\item For $\frac 1 2 \leq \alpha< 1$ we have:
			\begin{itemize}
				\item if $a<0$ the solutions almost surely blow up in finite time to $-\infty$
				\item if $a>0$, the solutions with initial condition smaller than $-\sqrt{a}$ almost surely exit the interval $(-\infty,-\sqrt{a})$ in finite time for any initial condition $\vert x \vert <\sqrt{a}$, the solution almost surely reaches one of the boundaries of the interval in finite time. The probability of reaching $-\sqrt{a}$ prior to reaching $\sqrt{a}$ is given by
				\[ \frac{p(\sqrt{a}^-) - p(x)}{p(\sqrt{a}^-) - p(-\sqrt{a}^+)}\] 
				and is plotted in Fig.~\ref{fig:SNBifs}(c).
			\end{itemize} 
			\item for $\alpha \geq 1$, the solutions almost surely never blow up, and for $a>0$, the exit time of $(-\sqrt{a},\sqrt{a})$ is almost surely infinite.
		\end{itemize}
	\end{proposition}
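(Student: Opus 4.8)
The plan is to apply Feller's test for explosions at every boundary of the state space, exactly as in the proofs of Propositions~\ref{pro:ExistenceUniqueness} and~\ref{pro:firsthittingtimezero}, after splitting the line at the zeros of the diffusion. When $a>0$ the diffusion $\sigma\vert x^2-a\vert^{\alpha}$ vanishes only at $\pm\sqrt a$, so the line decomposes into the invariant intervals $(-\infty,-\sqrt a)$, $(-\sqrt a,\sqrt a)$ and $(\sqrt a,\infty)$, and the solution stays in the interval of its initial datum until it is possibly absorbed at a finite endpoint. When $a<0$ the diffusion never vanishes, the state space is all of $\R$, and the only boundaries are $\pm\infty$. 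In each interval I would use the scale density $p'(x)=\exp(-2G(x))$ with $G(x)=\int_c^x \frac{-\xi^2+a}{\sigma^2\vert\xi^2-a\vert^{2\alpha}}\,d\xi$ and the speed measure $m(dx)=2\,p'(x)^{-1}\gamma(x)^{-2}\,dx$, reading off the accessibility of each boundary from Feller's test, i.e.\ from finiteness of the Feller function $v$ built from $p$ and $m$ together with the divergence of $p$.

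Consider first the middle interval $(-\sqrt a,\sqrt a)$ for $a>0$ and $\tfrac12\le\alpha<1$. In the shifted variables $y^{\pm}$ the endpoints are H\"older-$\alpha$ singular points with local drift $\mp2\sqrt a\,y^{\pm}$ and local diffusion $\sim\sigma(2\sqrt a)^{\alpha}\vert y^{\pm}\vert^{\alpha}$, i.e.\ equation~\eqref{eq:generalg} with $\lambda=\mp2\sqrt a$ and $\alpha<1$. Since $(\lambda\xi+g(\xi))/\gamma^2(\xi)\sim C\vert\xi\vert^{1-2\alpha}$ is integrable at a singular point when $\alpha<1$, the scale function has finite limits $p(-\sqrt a^+)$ and $p(\sqrt a^-)$, and the estimate already carried out in Proposition~\ref{pro:firsthittingtimezero} (which used only $\alpha<1$ and local integrability) gives $v<\infty$ at both endpoints, so both are accessible. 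On a bounded interval with two accessible endpoints the exit time is a.s.\ finite, and optional stopping of the bounded martingale $p(X_{t})$ at that exit time yields the absorption probability at $-\sqrt a$ before $\sqrt a$,
\[
\frac{p(\sqrt a^-)-p(x)}{p(\sqrt a^-)-p(-\sqrt a^+)}.
\]
The cases $\alpha\ge1$ then follow from the same machinery: $p(+\infty)=+\infty$ excludes escape to $+\infty$, the computation below gives $v(-\infty)=\infty$ (hence no escape to $-\infty$) for every $\alpha>\tfrac34$, so there is no blow-up, while Theorem~\ref{thm:NotTouchingZero} applied in the $y^{\pm}$ coordinates shows that $\pm\sqrt a$ are never reached, making the exit time of $(-\sqrt a,\sqrt a)$ a.s.\ infinite.

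The delicate case, which I would keep for last, is blow-up to $-\infty$ when $a<0$; the escape through the left end of $(-\infty,-\sqrt a)$ when $a>0$ is governed by the identical large-$\vert x\vert$ computation. Escape to $+\infty$ is excluded for all $\alpha$: as $\xi\to+\infty$ the integrand of $G$ is $\sim-\sigma^{-2}\vert\xi\vert^{2-4\alpha}$, so $p'$ stays bounded below by a positive constant and $p(+\infty)=+\infty$. For the left boundary I would estimate $v(-\infty)=\int_{-\infty}^{c}p'(\xi)\,m((\xi,c))\,d\xi$ by a Laplace argument: when $G(-\infty)=+\infty$ one finds $p'(\xi)\,m((\xi,c))\sim\vert\xi\vert^{-2}$ as $\xi\to-\infty$, so $v(-\infty)<\infty$; combined with $p(+\infty)=+\infty$ and the a.s.\ convergence of the nonnegative supermartingale $p(X_{t})-p(-\infty)$, this forces $X_t\to-\infty$ in finite time almost surely.

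The main obstacle is whether $G(-\infty)=+\infty$ actually holds. Since $G'(\xi)\sim-\sigma^{-2}\vert\xi\vert^{2-4\alpha}$, one has $G(-\infty)=+\infty$ precisely when $\vert\xi\vert^{2-4\alpha}$ is non-integrable at infinity, i.e.\ when $2-4\alpha\ge-1$, that is $\alpha\le\tfrac34$. For $\tfrac34<\alpha<1$ the same computation instead gives $G(\pm\infty)$ \emph{finite}, hence $p(\pm\infty)=\pm\infty$ and a speed measure $m(dx)\sim\vert x\vert^{-4\alpha}\,dx$ of finite total mass, so the diffusion is positive recurrent and does \emph{not} blow up; note that $\alpha=\tfrac34$ is exactly the threshold $\delta=2\alpha=1+\beta/2$ appearing in assumption~\ref{assumption:H6}. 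I therefore expect the blow-up statement to be valid only for $\alpha<\tfrac34$ (with $\alpha=\tfrac34$ a $\sigma$-dependent borderline), and reconciling this with — or suitably restricting — the stated range $[\tfrac12,1)$ is the crux of the argument.
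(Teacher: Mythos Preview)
Your strategy is exactly the paper's: split the line at the zeros of $\gamma$, apply Feller's test on each component, and reuse the local estimates of Propositions~\ref{pro:ExistenceUniqueness} and~\ref{pro:firsthittingtimezero} at the finite singular endpoints. On the bounded interval $(-\sqrt a,\sqrt a)$ for $\alpha<1$, and on all three intervals for $\alpha\ge 1$, your argument coincides with the paper's and is correct; the hitting-probability formula also matches.

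Your final paragraph, however, is not a gap in your reasoning but a correct diagnosis of an error in the proposition as stated. The paper's own proof computes, for $a<0$, that the argument of the exponential in $p'(y)$ behaves like $\frac{2}{4\alpha-3}\,y^{3-4\alpha}$ at infinity and then asserts that ``this quantity has a finite limit when $\alpha\ge 1$'' and that ``for $\alpha<1$, $p(x)$ \ldots\ has a finite limit at $-\infty$''. Both claims misplace the threshold: $y^{3-4\alpha}$ has a finite limit at infinity precisely when $\alpha>3/4$, not $\alpha\ge 1$. Your computation is the right one: for $3/4<\alpha<1$ the integral $\int^{y}(\xi^2-a)^{1-2\alpha}\,d\xi$ converges, $G$ is bounded, $p'$ is bounded between positive constants, and $p(\pm\infty)=\pm\infty$, so the diffusion on $\R$ is (positive) recurrent and certainly does not blow up. The connection you draw with assumption~\ref{assumption:H6} is exactly the point: here $\delta=2\alpha$ and $\beta=1$, so $\delta>1+\beta/2$ is $\alpha>3/4$, and the general non-explosion criterion of Proposition~\ref{pro:ExistenceUniqueness} already predicts no blow-up in that range even though the drift is not confining.

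In short: there is nothing to fix in your proof for the parts that agree with the statement, and the ``obstacle'' you flag is real. The $a<0$ blow-up claim holds only for $\alpha<3/4$ (with the case $\alpha=3/4$ depending on $\sigma$, as you note), and the paper's proof errs at the step where it reads off the exponent range. For $a>0$ and $x_0<-\sqrt a$ the conclusion ``exit of $(-\infty,-\sqrt a)$ in finite time'' survives for $3/4<\alpha<1$, but through $-\sqrt a$ rather than $-\infty$, since $p(-\infty)=-\infty$ while $-\sqrt a$ remains accessible.
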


	\begin{proof}
		We demonstrate this proposition using Feller's test for explosion. 

		Let us start by considering $a<0$. The diffusion is then defined on $\R$ with no singular point, and Feller's scale function reads:
		\[p(x)=\int_{c}^{x} \exp\left(-2\int_c^y -(\xi^2+a)^{1-2\alpha}\,d\xi \right)dy.\]
		At infinity, the integrand inside the exponential is equivalent at infinity to $\frac{2}{2(2\alpha-1)-1} y^{-2(2\alpha-1)+1}$. This quantity has a finite limit when $\alpha\geq 1 $, implying that $p(x)\to \pm \infty$ when $x\to \pm \infty$. Feller's test implies that the first exit time of $(-\infty,\infty)$ is infinite with probability one. 

		For $\alpha<1$, $p(x)$ tends to $\infty$ at $\infty$, and has a finite limit at $-\infty$. This property ensures that the probability of the process diverge towards $-\infty$ is equal to one. It remains to prove that the explosion occurs in finite time. To this purpose, we need to show that the limit of Feller's scale function $v(x)$ is finite at $-\infty$. This is the case under the assumption $\alpha<1$. Indeed, $v(x)$ is defined as:
		\[v(x)=\int_c^x \int_c^y \exp\left(2\int_z^y (u^2-a)^{1-2\alpha}\right)\]
		The integral in the argument of the exponential function is equivalent at $-\infty$ to $\frac 2 {\Phi} (y^{\Phi}-z^{\Phi})$ which tends to $-\infty$ as a power function, hence the exponential has moments of any order, and in particular is twice integrable at $-\infty$.

		For $a>0$, we consider the intervals $(-\infty,-\sqrt{a})$ and $(\sqrt{a},\infty)$. The above analysis proved that for $\alpha\geq 1$, $p(x)$ tends to $\infty$ when $x\to\infty$ and towards a finite value at $x\to-\infty$. At $\pm \sqrt{a}$, the argument of the exponential term is equivalent to $\int x^{-2\alpha+1}$ which diverges for $\alpha\geq 1$, implying that the exit time of $(-\infty, -\sqrt{a})$, $(-\sqrt{a},\sqrt{a})$ and $(\sqrt{a},\infty)$ are all almost surely infinite and that there is no blow up in finite time. 
		For $\alpha<1$, both $p(-\sqrt{a}^-)$ and $p(\sqrt{a}^+)$ are finite. For $c\in(-\sqrt{a},\sqrt{a})$, we have:
		\[\begin{cases}
			G(x)=\int_{c}^x -(\xi+\sqrt{a})^{1-2\alpha}(-\xi+\sqrt{a})^{1-2\alpha}\,d\xi\\
			v(x)=\displaystyle{2\int_c^x\int_y^x \frac{\exp(-2(G(y)-G(z)))}{(-x^2+a)^{2\alpha}}\,dz \,dy}
		\end{cases}\]
		and using a similar argument as used in the proof of proposition~\ref{pro:firsthittingtimezero}, we show that $\lim_{x\to\sqrt{a}^-} \,v(x)<\infty$, and by symmetry we hence have $\lim_{x\to -\sqrt{a}^+} \,v(x) <\infty$. Feller's test for explosion hence ensures that the first exit time of the interval $(-\sqrt{a},\sqrt{a})$ is almost surely finite. We hence converge in finite time towards $\sqrt{a}$ or $-\sqrt{a}$. Moreover, the probability for a trajectory starting with an initial condition $\vert x\vert <\sqrt{a}$ to reach $\sqrt{a}$ (resp. $-\sqrt{a}$) prior to reaching $-\sqrt{a}$ (resp. $\sqrt{a}$), denoted  $p_+(x)$ (resp. $p_-(x)$) have the expression: 
		\[ p_-(x) = 1-p_+(x) = \frac{p(\sqrt{a}^-) - p(x)}{p(\sqrt{a}^-) - p(-\sqrt{a}^+)}\]
		This quantity is plotted in Figure~\ref{fig:SNBifs}(c).
	\end{proof}

	Application of the general results of section~\ref{sec:general} yield the following:

	\begin{proposition}\label{pro:SN}
		For $a>0$, the saddle-node equation~\eqref{eq:SaddleNode} has two Dirac delta distributed functions $\delta_{\sqrt{a}}$ and $\delta_{-\sqrt{a}}$, which enjoy the stability properties: 
		\begin{itemize}
			\item for $\alpha<1$, both $\delta_{\sqrt{a}}$ and $\delta_{-\sqrt{a}}$ are stable in probability, and the first hitting time $\tau$ of one of the equilibria is almost surely finite. 
			\item for $\alpha=1$, the stationary distribution $\delta_{\sqrt{a}}$ is always stable in probability. The stationary distribution $\delta_{-\sqrt{a}}$ is stable in probability for $a>\sigma^{-4}$ and unstable in probability otherwise. 
			\item for $\alpha>1$, the stationary distribution $\delta_{\sqrt{a}}$ is stable in probability and $\delta_{-\sqrt{a}}$ is unstable in probability.
		\end{itemize}
	\end{proposition}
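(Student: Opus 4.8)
The plan is to reduce the stability analysis at each equilibrium to the general theory of Theorem~\ref{theo:Stability0} through the local recentering already introduced in the text. Setting $y^+_t = X_t - \sqrt{a}$ and $y^-_t = X_t + \sqrt{a}$ brings the dynamics near $\sqrt{a}$ (resp.\ $-\sqrt{a}$) to an equation of the form~\eqref{eq:generalg} with drift $f_{\pm}(y) = \mp 2\sqrt{a}\,y - y^2$ and diffusion $\gamma_{\pm}(y) = \sigma\,|{\mp 2\sqrt{a}\,y - y^2}|^{\alpha}$. Reading off the local data, the linearized drift coefficient is $\lambda_+ = -2\sqrt{a}$ at $\sqrt{a}$ and $\lambda_- = +2\sqrt{a}$ at $-\sqrt{a}$, while in both cases the diffusion satisfies $\gamma_{\pm}(y) \sim \sigma\,(2\sqrt{a})^{\alpha}\,|y|^{\alpha}$ near $y=0$, so that assumptions~\ref{assumption:H1} and~\ref{assumption:H3} hold with effective noise amplitude $\sigma_{\pm} = \sigma\,(2\sqrt{a})^{\alpha}$ and the same H\"older exponent $\alpha$.

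Since stability in probability is a purely local notion---the Lyapunov arguments behind Theorem~\ref{theo:Stability0} only require the sign of $\Lop V$ on a neighborhood of the equilibrium---I may apply that theorem verbatim at each recentered equilibrium, irrespective of the fact that the global confinement assumption~\ref{assumption:H2} fails on the side where the quadratic drift is non-confining. For $\alpha < 1$, Theorem~\ref{theo:Stability0} gives stability in probability for any value of the linear coefficient, so both $\delta_{\sqrt{a}}$ and $\delta_{-\sqrt{a}}$ are stable. For $\alpha > 1$, stability is governed by the sign of $\lambda_{\pm}$: $\lambda_+ = -2\sqrt{a} < 0$ yields stability of $\delta_{\sqrt{a}}$, while $\lambda_- = +2\sqrt{a} > 0$ yields instability of $\delta_{-\sqrt{a}}$. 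For the critical exponent $\alpha = 1$, the theorem compares $\lambda_{\pm}$ with $\sigma_{\pm}^2/2$; since $\lambda_+ < 0$ the equilibrium $\sqrt{a}$ is always stable, whereas $\delta_{-\sqrt{a}}$ is stable precisely when $\lambda_- < \sigma_-^2/2$. Substituting $\lambda_- = 2\sqrt{a}$ and $\sigma_- = 2\sigma\sqrt{a}$ this reads $2\sqrt{a} < 2\sigma^2 a$, equivalently $a > \sigma^{-4}$, reproducing the announced threshold.

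It remains to justify the almost sure finiteness of the first hitting time of the equilibrium set in the case $\alpha < 1$. This does not follow from Proposition~\ref{pro:firsthittingtimezero} directly, because the confinement hypothesis is violated; instead it is exactly the content of the Feller-test computation carried out for $a > 0$ and $\alpha < 1$ in Proposition~\ref{pro:NoBlowUp}, which shows that, starting from any $x$ with $|x| < \sqrt{a}$, the process almost surely exits $(-\sqrt{a},\sqrt{a})$ in finite time and is therefore absorbed at one of the two boundaries. I expect the main delicate point to be the $\alpha = 1$ bookkeeping: one must track the factor $(2\sqrt{a})^{\alpha}$ coming from the linearization of $|X^2 - a|^{\alpha}$ at the equilibrium so that the effective noise enters the stability criterion correctly. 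Getting this multiplicative constant right is precisely what produces the sharp boundary $a = \sigma^{-4}$, and conflating it with the bare coefficient $\sigma$ would misplace the transition.
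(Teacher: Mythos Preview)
Your proposal is correct and follows essentially the same route as the paper: apply Theorem~\ref{theo:Stability0} to the recentered processes $y^{\pm}$ for stability, and invoke Proposition~\ref{pro:NoBlowUp} for almost sure finite absorption when $\alpha<1$. Your handling of the $\alpha=1$ case---carrying the factor $(2\sqrt{a})^{\alpha}$ into the effective noise amplitude $\sigma_-=2\sigma\sqrt{a}$ so that the criterion $\lambda_-<\sigma_-^2/2$ becomes $2\sqrt{a}<2\sigma^2 a$, i.e.\ $a>\sigma^{-4}$---is exactly the computation the paper sketches, and your explicit remark that stability in probability is local (so the failure of global confinement is immaterial here) is a welcome clarification the paper leaves implicit.
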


	\begin{proof}
		This is a direct application of the results of section~\ref{sec:general} applied to $y^+$ to characterize the properties of $\sqrt{a}$ and to $y^-$ for $-\sqrt{a}$. For instance for the fixed point $-\sqrt{a}$ and $\alpha=1$, the fixed point is stable if and only if $2\sqrt{a}<2 \sigma^2 \alpha$ i.e. $\sigma^2\sqrt{a}>1$. The finite absorption time property is a consequence of proposition~\ref{pro:NoBlowUp}.

	\end{proof}

	For $\alpha<1$, we know that the trajectories either reach one of the equilibria or blow up in finite time, which describe perfectly the permanent regime of the saddle-node bifurcation. We are now interested in stationary distributions of the saddle-node equation distinct from $\delta_{\sqrt{a}}$ and $\delta_{-\sqrt{a}}$. Since the system presents two singular points, the results of proposition~\ref{pro:Station}, based on integrability properties on $\R^+$ or $\R^-$ of the solutions of the Kolmogorov equation, do not directly apply. However, a similar approach allows to demonstrate the following:

	\begin{proposition}\label{pro:SNStationary}
		Stationary distributions of the saddle-node equation~\eqref{eq:SaddleNode} distinct from $\delta_{\sqrt{a}}$ and $\delta_{-\sqrt{a}}$ (when $a>0$) enjoy the following classification:
		\begin{itemize}
			\item For $\alpha=1$ and $0<a<\sigma^{-4}$, the system has a stationary distribution, stable in the first approximation, charging only the interval $(-\infty,-\sqrt{a})$ that undergoes a P-bifurcation at $\sigma^2\sqrt{a}=\frac 1 2 $. On $(-\sqrt{a},\infty)$, any solution converges almost surely exponentially towards $\sqrt{a}$ (see Figure~\ref{fig:SNBifs}). 
			\item For $\alpha=1$ and $a<0$, there exists a unique distribution charging the whole real line.
			\item For $\alpha>1$, the system has no additional stationary solution. 
		\end{itemize}
	\end{proposition}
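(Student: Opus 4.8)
The plan is to reduce the problem to the stationary Fokker--Planck equation \eqref{eq:Kolmo}, exactly as in Proposition \ref{pro:Station}, but adapted to the fact that for $a>0$ the diffusion now vanishes at the two singular points $\pm\sqrt{a}$. By Proposition \ref{pro:NoBlowUp} and Theorem \ref{thm:NotTouchingZero} (the latter applied through the shifts $y^{\pm}$), for $\alpha\geq 1$ the three open intervals $(-\infty,-\sqrt{a})$, $(-\sqrt{a},\sqrt{a})$ and $(\sqrt{a},\infty)$ are each invariant, so any stationary density distinct from $\delta_{\pm\sqrt{a}}$ is supported in one of them; for $a<0$ the diffusion never vanishes and $\R$ is a single invariant domain. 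On each such interval I would write the general solution of \eqref{eq:Kolmo} as the homogeneous part $p_0(x)=C\,\gamma(x)^{-2}\exp\!\big(\int^x 2f(y)/\gamma(y)^2\,dy\big)$ with $f(x)=-x^2+a$, plus the current-$K$ particular solution \eqref{eq:Solution1}, and then decide for which parameters an acceptable (nonnegative, normalizable) combination exists.

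For \textbf{$\alpha=1$ and $a>0$} I would integrate $2f/\gamma^2=-2/(\sigma^2(x^2-a))$ by partial fractions, obtaining $p_0(x)\propto \sigma^{-2}(x^2-a)^{-2}\big|(x-\sqrt{a})/(x+\sqrt{a})\big|^{-\theta}$ with $\theta:=1/(\sigma^2\sqrt{a})$. The whole classification then reduces to bookkeeping of local exponents: near $\sqrt{a}$ one has $p_0\sim|x-\sqrt{a}|^{-2-\theta}$, near $-\sqrt{a}$ one has $p_0\sim|x+\sqrt{a}|^{-2+\theta}$, and at $\pm\infty$ one has $p_0\sim|x|^{-4}$. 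The exponent $-2-\theta<-1$ makes $p_0$ non-integrable on the two intervals bordering $\sqrt{a}$, while $-2+\theta>-1$ (that is $\theta>1$, i.e. $a<\sigma^{-4}$) makes it integrable at $-\sqrt{a}$; since the tails are always integrable, the only admissible solution lives on $(-\infty,-\sqrt{a})$ precisely when $0<a<\sigma^{-4}$. The P-bifurcation falls out of the sign of $-2+\theta$: the density diverges at $-\sqrt{a}$ for $\theta<2$ and vanishes there for $\theta>2$, i.e. a qualitative change at $\sigma^2\sqrt{a}=\tfrac12$. To close this case I must rule out a rescue by a nonzero current, following Proposition \ref{pro:Station}: even tuning the bracket in \eqref{eq:Solution1} to vanish at $\sqrt{a}$ leaves a residual $|x-\sqrt{a}|^{-1}$ singularity, so no $K$ restores integrability. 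The a.s. exponential convergence to $\sqrt{a}$ on $(-\sqrt{a},\infty)$ is inherited from the stability part of Proposition \ref{pro:SN}.

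For \textbf{$\alpha=1$ and $a<0$} the diffusion never vanishes, $2f/\gamma^2=-2/(\sigma^2(x^2+|a|))$ integrates to a bounded $\arctan$, hence $\exp(\int^x 2f/\gamma^2)$ is bounded above and below and $p_0(x)\sim|x|^{-4}$: the normalized $p_0$ is the unique admissible density on $\R$, with $K=0$ forced on a domain without accessible boundaries. For \textbf{$\alpha>1$} the decisive feature is that near $\sqrt{a}$ the integrand $2f/\gamma^2$ behaves like $|x-\sqrt{a}|^{1-2\alpha}$ with $1-2\alpha<-1$, so $\int^x 2f/\gamma^2$ diverges and $p_0$ acquires an \emph{essential} (super-exponential) singularity at $\sqrt{a}$, non-integrable on both adjacent intervals and manifestly unrescuable by any current; this immediately discards $(-\sqrt{a},\sqrt{a})$ and $(\sqrt{a},\infty)$.

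The delicate remaining point, which I expect to be the main obstacle, is the interval $(-\infty,-\sqrt{a})$ for $\alpha>1$: there $-\sqrt{a}$ is the \emph{unstable} singular point and the bare drift $-x^2+a$ is anti-confining as $x\to-\infty$, so the existence of an admissible density is governed not by the drift alone but by the competition between this anti-confining drift and the strong multiplicative noise $\gamma\sim\sigma|x|^{2\alpha}$. I would track the two boundary behaviors precisely: at $-\sqrt{a}$, where $p_0$ has an essential zero and is locally integrable, and at $-\infty$, where $p_0\sim|x|^{-2\delta}=|x|^{-4\alpha}$ and the comparison $\delta=2\alpha$ versus $1+\beta/2=\tfrac32$ (in the notation of assumption \ref{assumption:H6}, here with $\nu>0$) controls integrability, exactly as in the infinity-analysis of Proposition \ref{pro:Station}. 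Establishing the claimed absence of an additional stationary solution for $\alpha>1$ therefore requires ruling out positive recurrence on $(-\infty,-\sqrt{a})$, i.e. showing that the speed measure is infinite or the process is transient toward $-\infty$ despite the noise-induced confinement; this balance at the anti-confining end is the subtle step, while the remainder of the proof is the routine exponent bookkeeping sketched above.
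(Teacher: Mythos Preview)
Your approach via the stationary Fokker--Planck equation and local exponent bookkeeping is exactly the route the paper takes; for both $\alpha=1$ cases (the factorised density for $a>0$ and the $\arctan$ form for $a<0$) your argument is correct and matches the paper's.

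There are two genuine gaps. First, you do not address the claim that the stationary distribution on $(-\infty,-\sqrt a)$ is \emph{stable in the first approximation}; this is part of the statement and is not inherited from Proposition~\ref{pro:SN}, which concerns only the Dirac solutions. The paper handles it by computing the first two moments of the stationary density in closed form (obtaining $m=-1/\sigma^2$ and $s=(2\Phi^2-1)a$ with $\Phi=1/(\sigma^2\sqrt a)$) and then evaluating the Lyapunov exponent of the linearised equation $dv_t=-2X_tv_t\,dt+2\sigma X_tv_t\,dW_t$, namely $l=-2\Exp[X_t]-2\sigma^2\Exp[X_t^2]=2(\sigma^4 a-1)/\sigma^2<0$ for $\sigma^2\sqrt a<1$. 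You would need to supply this computation.

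Second, your instinct about the interval $(-\infty,-\sqrt a)$ for $\alpha>1$ is well-founded, but note that your own boundary analysis actually pulls against the claim you are trying to prove: you observe that $p_0$ has an essential zero at $-\sqrt a$ from the left (hence is integrable there) and that $p_0\sim|x|^{-4\alpha}$ at $-\infty$ (hence integrable there too). Since both boundaries are inaccessible for $\alpha\geq 1$ (Proposition~\ref{pro:NoBlowUp} and Theorem~\ref{thm:NotTouchingZero}), this yields a finite speed measure and hence an admissible stationary density on $(-\infty,-\sqrt a)$. The paper disposes of the $\alpha>1$ case in one line by asserting that the exponential factor in $p_0$ diverges at $-\sqrt a$; but that divergence to $+\infty$ occurs only on approach from the right, inside $(-\sqrt a,\sqrt a)$, whereas on approach from the left the argument of the exponential tends to $-\infty$ and $p_0$ vanishes. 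So neither your proposal nor the paper's argument actually establishes the absence of a stationary solution on $(-\infty,-\sqrt a)$ for $\alpha>1$; your hesitation here is justified, and the ``routine'' closure you anticipate is not available.
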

	\begin{proof}
	 In the case $\alpha=1$, the Kolmogorov equation has the solution:
		\[p_0(x)\propto \left\vert x+\sqrt {a} \right\vert ^{-2+{\frac {1}{{\sigma}^{2}\sqrt {a}}}}
		\left\vert x-\sqrt {a} \right\vert ^{-2-{\frac {1}{{\sigma}^{2}\sqrt {a}}}}
		\]
		For $a>0$, this function is integrable at $\sqrt{a}$ if and only if $-2-\frac{1}{\sigma^2\sqrt{a}} >-1$ which is not possible. At $-\sqrt{a}$, this function is integrable when ${-2+{\frac {1}{{\sigma}^{2}\sqrt {a}}}}>1$ i.e. $\sigma^2\sqrt{a}<1$ (i.e. when $-\sqrt{a}$ is unstable). Eventually, this function is integrable at infinity whatever the parameters, since it behaves like $x^{-4}$. Hence there exists a unique stationary distribution charging $(-\infty,-\sqrt{a})$. In both the interval $(-\sqrt{a},\sqrt{a})$ and $(\sqrt{a},\infty)$ the only stationary distribution is $\delta_{\sqrt{a}}$ which is stable. 
		The additional stationary distribution on $(-\infty,-\sqrt{a})$ reads:
		\[p_0(x) = Z \left(- x-\sqrt {a} \right) ^{-2+{\frac {1}{{\sigma}^{2}\sqrt {a}}}}
		\left( -x+\sqrt {a} \right) ^{-2-{\frac {1}{{\sigma}^{2}\sqrt {a}}}}=:Z\,q_0(x)
		\]
		Denoting $\Phi={\frac {1}{{\sigma}^{2}\sqrt {a}}}$, we give the following closed form primitives related:
		\[
		\begin{cases}
			\displaystyle{\int q_0(x)\,dx} &= \displaystyle{-(-\sqrt{a}-x)^{-1+\Phi}(\sqrt{a}-x)^{-1-\Phi}\frac{a(2\Phi^2-1)-2\sqrt{a}\Phi x + x^2}{4a^{3/2}\Phi (\Phi^2-1)}}\\
			\displaystyle{\int x\,q_0(x)\,dx} &= \displaystyle{(-\sqrt{a}-x)^{-1+\Phi}(\sqrt{a}-x)^{-1-\Phi}\frac{-2\sqrt{a}\Phi x + a + x^2}{4a(\Phi^2-1)}}\\
			\displaystyle{\int x^2\,q_0(x)\,dx} &= \displaystyle{-(-\sqrt{a}-x)^{-1+\Phi}(\sqrt{a}-x)^{-1-\Phi}\frac{-2\sqrt{a}\Phi x + a + (2\Phi^2-1)x^2}{4\sqrt{a}\Phi(\Phi^2-1)}}\\
		\end{cases}
		\]
		Using these formulae, it is straightforward to obtain the normalization constant of $p_0$ on $(-\infty,-\sqrt{a})$: 
		\[Z=4a^{3/2} \Phi (\Phi^2-1)\]
		The expectation $m$ is equal to:
		\[m=Z\int_{-\infty}^{-\sqrt{a}}xq_0(x)\,dx = -\Phi \sqrt{a}=-\frac 1 {\sigma^2},\]
		and the second moment:
		\[s=Z\int_{-\infty}^{-\sqrt{a}}x^2q_0(x)\,dx ={(2\Phi^2-1) a }\]
		Eventually, let us analyze the shape of the obtained stationary distribution. The differential of the distribution reads:
		\[p_0'(x) = p_0(x)\left(\frac{-2-\Phi}{x-\sqrt{a}} + \frac{-2+\Phi}{x+\sqrt{a}}\right).\] 
		This quantity is always negative for $\Phi<2$, the density is decreasing and moreover in that case it is clear that the distribution diverges at $-\sqrt{a}$. For $\Phi>2$, the distribution vanishes at zero, takes its maximum at $x^*=-\frac{\Phi\sqrt{a}}{2}=-\frac 1 {2\sigma^2}$ and goes back to zero. 

		Let us now address the stability in the first approximation for this distribution. The linearized equation reads:
		\[dv_t=-2X_tv_t\,dt+2\sigma X_tv_tdW_t\]Integrating the linearized equation as previously done, we obtain that the Lyapunov exponent of the linearized equation reads:
		\[l=-2\Exp[X_t] - 2\sigma^2 \Exp[X_t^2]=\frac{2(-1+\sigma^4\,a)}{\sigma^2}\]
		and since we consider the case $\sigma^2\sqrt{a}<1$, the Lyapunov exponent is strictly negative, implying almost sure exponential stability of zero for the linearized equation, hence stochastic linear stability of $p_0(x)$. 

		For $a<0$, the solutions of the Kolmogorov equation are given by:
		\[\displaystyle{p_0(x)=\frac{K }{(x^2-a)^2}}\exp\left({-2\frac{\arctan(x/\sqrt{-a})}{\sigma^2\sqrt{-a}}}\right)\]
		and the definition interval of possible stationary distributions is $\R$. In order for $p_0$ to define a probability distribution, we need this function to be integrable at $\pm \infty$, which is always the case. The obtained stationary distribution reaches its maximum again for $x=-\frac{1}{2\sigma^2}$.

		For $\alpha>1$, the solution of the Kolmogorov equation reads:
		\[p(x)=\frac 1 {(a-x^2)^{2\alpha}}\exp\left(\frac 2 {\sigma^2} \int_{\cdot}^x (-y^2+a)^{1-2\alpha}\,dy\right)\]
		At $-\sqrt{a}$, the exponential term diverges since it behaves as $(x+\sqrt{a})^{1-2\alpha}$, hence the solution is never integrable at zero. It is easy at this point to show that there is no integrable solution to the Kolmogorov equation.
		
	\end{proof}
	\begin{remark}
		For $\alpha <1$ one could be interested in the existence of quasi-stationary solutions. An additional difficulty arises from the fact that the solutions can be attracted in finite time towards different solutions: $\pm\sqrt{a}$ whatever $\alpha<1$, and $-\infty$ for $\alpha <\frac 3 4$. Moreover, in the saddle-node bifurcation case as studied here, the diffusion coefficient is exactly equal to a power function. Therefore, for $\alpha<1$, blow up in finite time as well as absorption at $\pm \sqrt{a}$ can occur are not a consequence of the results of~\cite{cattiaux-collet-etal:09}, are not in the scope of the present manuscript and require new mathematical developments that will be addressed elsewhere. 
	\end{remark}

	\begin{figure}[!h]
		\centering
			\includegraphics[width=.7\textwidth]{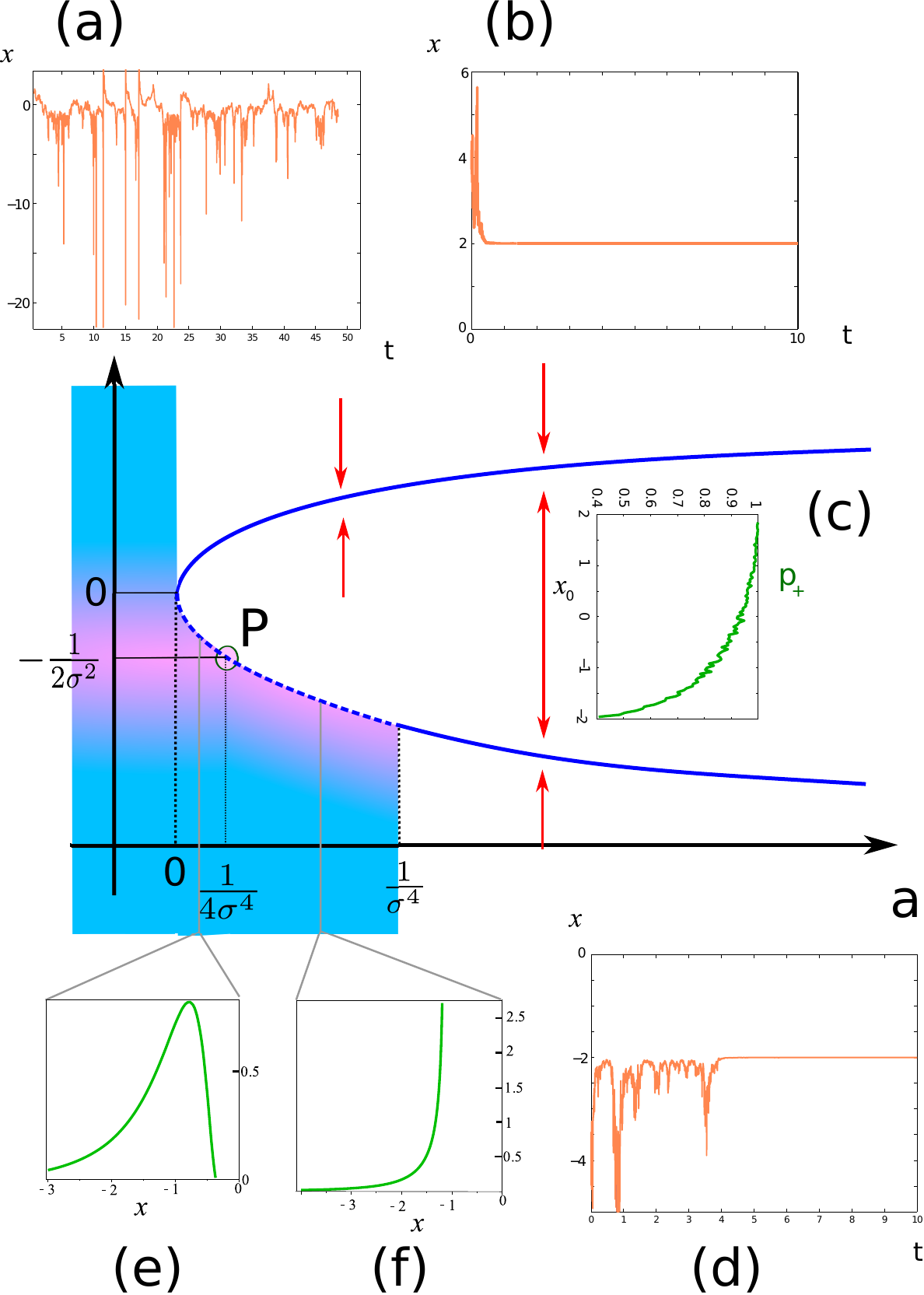}
		\caption{Behaviors of the stochastic saddle-node system with $\alpha=1$. The bifurcation diagram as a function of $a$ is at the center of the figure, and displays the singular points in blue lines, their stability marked by the type of curve: plain: stable, dashed: unstable. Stationary distributions are encoded in color in the interval $a\leq \sigma^{-4}$. Two typical stationary distributions are plotted in Figs. (e) and (f): one for $a\leq 1/4\sigma^4$ where the pdf has a peak at $x=-\sigma^{-2}/2$, and for $a> \sigma^{-4}/4$ where the distribution diverges at zero. The P-bifurcation between these behaviors is noted P. (a) corresponds to a typical trajectory for $a=0.4<\sigma^{-4}/4$ with $\sigma=0.8$. (b),(c) and (d) correspond to $a>\sigma^{-4}$ and different initial conditions: (c): frequency of the trajectories converging towards $+\sqrt{a}$ as a function of the initial condition in $[-\sqrt{a},\sqrt{a}]$, (b) and (d) are typical trajectories for initial condition in $(\sqrt(a),\infty)$ or $(-\infty,-\sqrt{a})$ respectively ($a=4$ and $\sigma=0.8$). }
		\label{fig:SNBifs}

	\end{figure}

	The above result characterize the dynamics relatively exhaustively for $\alpha=1$, except for $a>\sigma^{-4}$ and $x_0\in(-\sqrt{a},\sqrt{a})$. In that case, both singular points are stable, and can be reached by the solution. Therefore for such initial conditions, there is a competition between the possible attractors, and the relative stability of one singular point compared to the other will determine the probability, given an initial condition $x_0$, to converge towards this equilibrium. This quantity is plotted in Figure~\ref{fig:SNBifs}(c). We observe, as one can expect, that the attractivity of $\sqrt{a}$ is stronger than that of $-\sqrt{a}$: the probability of converging towards $\sqrt{a}$ is never less than $0.4$ even for initial conditions very close from $-\sqrt{a}$, and goes relatively fast towards 1 as the initial condition is chosen closer from $\sqrt{a}$. 

	\paragraph{Universality properties} Let us now discuss the universality of this bifurcation. We consider a general one-dimensional diffusion equation:
	\[dx_t=f(x_t,\lambda)\,dt + g(x_t)dW_t\]
	with $f$ a smooth function such that $f(0,0)=\derpart{f}{x}(0,0)=0$ and $\derpart{f}{\lambda}(0,0)\neq 0$. Following the reduction to a normal form of Kuznetsov~\cite{kuznetsov:98}, we use a Taylor expansion of $f$:
	\[f(x,\lambda)=f_0(\lambda)+f_1(\lambda)\,x+f_2(\lambda)\,x^2+h(x)\]
	where $h(x)/x^3$ is bounded at $x=0$. We perform a shift of coordinates by defining a new variable $\xi=x+\delta$ and obtain:
	\begin{align*}
		d\xi_t&=\Big([f_0(\lambda)-f_1(\lambda)\delta]+f_2(\lambda)\delta^2+O(\delta^3)] \\
		&\quad + [f_1(\lambda)-2f_2(\lambda)\delta + O(\delta^2)]\xi\\
		&\quad +[f_2(\lambda)+O(\delta)]\xi^2 + O(\xi^3)\Big)\,dt + g(\xi-\delta)dW_t
	\end{align*}
	Using the inverse function theorem under the assumption that $\derpart{f}{\lambda}(0,0)\neq 0$, we deduce that there exists a smooth function $\delta(\lambda)= \frac{f_1'(0)}{2f_2(0)}\lambda+O(\lambda^2)$ such that the term $[f_1(\lambda)-2f_2(\lambda)\delta + O(\delta^2)]$ vanishes for all $\lambda$ in a neighborhood of $0$. This $\delta(\lambda)$ identified, we can apply the inverse function theorem to find locally an inverse to the constant term of the Taylor development seen as a function of $\lambda$. This inverse function, $\lambda(\mu)$, yields the system to the SDE:
	\[d\xi_t=(\mu + b(\mu)\xi^2_t + O(\xi^3))\,dt + g(\xi-\delta(\lambda(\mu)))dW_t \]
	with $b(0)=f_2(0)\neq 0$. The final reduction involves a scaling of the variable: $z_t=\vert b(\mu)\vert \xi_t$, yielding the SDE:
	\[dz_t=\Big(\beta + \varepsilon z_t^2+O(z_t^3)\Big)\,dt + g\Big(\frac{z_t}{\vert b(\mu)\vert}-\delta(\lambda(\mu))\Big)dW_t\]
	where $\varepsilon$ has the sign of $S^{(3)}(0)$. The stability and bifurcations of the whole system is hence reduced to the local analysis of the behavior of $g$ close of the equilibria. 
	
	\appendix
	\section{The stochastic Hopf bifurcation with complex noise.}
	{The stochastic Hopf bifurcation with multiplicative noise ($\alpha=1$) was investigated with RDS theory in~\cite{baxendale1994stochastic}. The method uses smoothness of the diffusion coefficient to characterize the changes in stability in a general case, and applies to characterize invariance measures in one specific case of the Hopf bifurcation. Here, we use our general results in a more general Hopf bifurcation model, and characterize emerging oscillations, in cases with arbitrary $\alpha \geq 1/2$.} The Hopf bifurcation for smooth dynamical system is topologically equivalent to the two dimensional normal form: 
	\[\der{x}{t} = \beta\,x - y + \varepsilon x(x^2+y^2) \qquad ; \qquad \der{y}{t} = \beta\,y +x + \varepsilon y(x^2+y^2) \]
	where $\beta$ is the bifurcation parameter and $\varepsilon=\pm 1$ governs the type of bifurcation. It can be conveniently written in complex form for $z_t=x_t+ \mathbf{i}\,y_t$ with $\mathbf{i}^2=-1$:
	\[\der{z}{t} = (\beta + \mathbf{i} + \varepsilon \vert z(t) \vert^2)\,z(t).\]
	The trivial solution $(x=0,y=0)$ is always solution of the equations. It is exponentially stable for any $\beta<0$ and exponentially unstable for any $\beta>0$. If $\varepsilon =-1$, the system presents stable oscillations for $\beta>0$ and the bifurcation is termed supercritical, and supercritical otherwise. 

	We study in this section the stochastic Hopf bifurcation with multiplicative noise. For simplicity, we will consider that the stochastic perturbations are driven by a single Brownian motion $W_t$. Let $\sigma^*=\sigma+\mathbf{i}\mu$ be a noise parameter, the stochastic Hopf normal form equation reads (in complex form):
	\[dZ_t = (\beta+i+\varepsilon \vert Z_t\vert ^2)Z_t \, dt + \sigma^* \vert Z_t\vert^{\alpha-1} Z_t \, dW_t,\] 
	which corresponds for $Z_t=X_t+\mathbf{i}Y_t$ to the bidimensional system:
	\begin{equation}\label{eq:Hopf}
		\begin{cases}
			dX_t &= (\beta\,X_t - Y_t + \varepsilon X_t\,(X_t^2+Y_t^2))\,dt + (\sigma\,X_t - \mu\,Y_t) \vert Z_t\vert^{\alpha-1}\,dW_t\\
			dY_t &= (\beta\,Y_t + X_t + \varepsilon Y_t\,(X_t^2+Y_t^2))\,dt + (\mu\,X_t +\sigma\, Y_t) \vert Z_t\vert^{\alpha-1}\,dW_t\\
		\end{cases}
	\end{equation}
	where $\beta$ and $\varepsilon$ correspond to the parameters of the Hopf bifurcation and $\mu$ and $\sigma$ to the amplitude of the stochastic perturbation. 

	The deterministic process $Z_t=0$ corresponding to $X_t=0,Y_t=0$ for all $t\geq 0$ is solution of \eqref{eq:Hopf} and is univocally defined by the fact that the modulus of $Z_t$ is null. We denote by $R_t=\vert Z_t \vert = \sqrt{X_t^2+Y_t^2}$ this variable, and by $\theta_t$ the argument of $Z_t$. 

	\begin{lemma}\label{lem:ItoHopf}
		The modulus of the variable $R_t=\vert Z_t \vert$ and the argument $\theta_t$ satisfy the equations:
		\[\begin{cases}
			dR_t&= \left( \beta R_t+\frac{\mu^2}{2}R_t^{2\alpha-1} + \varepsilon R_t^3 \right)\,dt + \sigma\,R_t^{\alpha}\,dW_t\\
			d\theta_t&= (-\sigma\mu R_t^{2\alpha-2} +1) \,dt+ \mu R_t^{\alpha-1}dW_t
		\end{cases}\]	
	\end{lemma}

	\begin{proof}
		Let $(X_t,Y_t)$ be a solution of the Hopf equations \eqref{eq:Hopf}. We apply It\^o formula to the variable $R_t=\sqrt{X_t^2+Y_t^2}$:
		\begin{align*}
			dR_t &= \frac{1}{R_t} (X_t dX_t + Y_t dY_t) + \frac 1 {2\,R_t^3} \left(Y_t^2 d\langle X \rangle_t + X_t^2 d\langle Y \rangle_t +2\,X_t\,Y_t d\langle X,Y \rangle_t \right)\\
			&= \Big\{ \frac{1}{R_t} \bigg(\beta\,X_t^2 - X_t\,Y_t + \varepsilon X_t^2\,(X_t^2+Y_t^2) + \beta\,Y_t^2 + X_t\,Y_t + \varepsilon Y_t^2\,(X_t^2+Y_t^2)\bigg) \\
			& \quad + \frac{R_t^{2\alpha-2}}{2\,R_t^3} \bigg( Y_t^2(\sigma\,X_t-\mu\,Y_t)^2 + X_t^2(\sigma\,Y_t+\mu\,X_t)^2 -2 X_t\,Y_t\,(\sigma\,X_t-\mu\,Y_t)\,(\sigma\,Y_t+\mu\,X_t)\bigg)\Big\}\,dt \\
			&\quad + \frac{R_t^{\alpha-1}}{R_t} \Big\{ X_t \, (\sigma\,X_t-\mu\,Y_t)+ Y_t \, (\sigma\,Y_t+\mu\,X_t)\Big\}\,dW_t\\
			&=\Big\{\frac{1}{R_t}(\beta R_t^2+\varepsilon R_t^4) + \frac{1}{2R_t^3}\mu^2 R_t^{4+2\alpha-2} \Big\}\,dt + \frac{1}{R_t}\sigma\,R_t^{2+\alpha-1}\,dW_t\\
			&= \left( \beta R_t + \frac{\mu^2}{2} R_t^{2\alpha-1}+ \varepsilon R_t^3 \right)\,dt + \sigma\,R_t^{\alpha}\,dW_t
		\end{align*}
		The argument $\theta_t$ is given by $\theta_t=\arctan(Y_t/X_t)$. Applying It\^o's formula again yields:
		\begin{align*}
			d\theta_t &= (1-\sigma\mu R_t^{2(\alpha-1)}) \,dt+ \mu R_t^{\alpha-1}dW_t
		\end{align*}
		which ends the proof of the lemma.
	\end{proof}

	It is important to note that the equation on the modulus is uncoupled of the phase equation on $\theta_t$. The modulus of $(X_t,Y_t)$ is therefore solution of a stochastic differential equation of type bifurcation~\eqref{eq:generalg}. Moreover, when $\alpha=1$, the equations take the simpler form:
	\[\begin{cases}
		dR_t&= \left( (\beta +\frac{\mu^2}{2}) R_t + \varepsilon R_t^3 \right)\,dt + \sigma\,R_t \,dW_t\\
		d\theta_t&= (1-\sigma\mu) \,dt+ \mu dW_t
	\end{cases}\]
	and hence the variable $R_t$ is solution of a stochastic pitchfork bifurcation. From the results of section~\ref{sec:Pitch}, it is not hard to establish the following:
	\begin{theorem}\label{theo:Hopf}
		The null solution of the supercritical Hopf equations is almost surely exponentially stable if $\beta<\frac{\sigma^2-\mu^2}{2}$ and asymptotically stochastically unstable if $\beta>\frac{\sigma^2-\mu^2}{2}$. In that case, there exists a new stochastically stable stationary solution with distribution:
		\[p_R(x)= \frac {2{\sigma^{1-\frac{2\lambda}{\sigma^2}}}} {\Gamma\left(-\frac 1 2 +\frac{\lambda}{\sigma^2}\right)} x^{-2(1-\frac{\lambda}{\sigma^2})}e^{-x^2/\sigma^2}\mathbbm{1}_{x\geq 0}.\]
		The null solution of the subcritical Hopf equations is almost surely exponentially unstable if $\beta >\frac{\sigma^2-\mu^2}{2}$. It is asymptotically stochastically unstable if $\beta > -\frac{\sigma^2+\mu^2}{2}$ and stochastically stable if $\beta < -\frac{\sigma^2+\mu^2}{2}$.
	\end{theorem}

	We observe in the purely imaginary noise case ($\sigma=0$), $R_t$ satisfies the deterministic pitchfork bifurcation equation. For $\beta>\frac{\mu^2}{2}$, $R_t=\sqrt{\beta -\frac{\mu^2}{2}}$ is the only attractive solution: the solution of the stochastic Hopf bifurcation asymptotically lives on the circle. Its rotates on this circle with a stochastic phase given by a Brownian motion multiplied by $\mu$ and a deterministic pulsation equal to $1$ (see Fig.~\ref{fig:Hopf}). When $\mu=0$, the phase equation is deterministic: the system rotates with pulsation $1$. When $\beta \in (\frac{2\sigma^2-\mu^2}{2},\frac{\sigma^2-\mu^2}{2})$, the stationary modulus of the solution has a distribution peaked at zero and no actual oscillation can be observed. When $\beta>\frac{2\sigma^2-\mu^2}{2}$, the stationary modulus of the solution has a maximum at $R^*=\sqrt{\beta-\frac{2\sigma^2-\mu^2}{2}}$ and the distribution is null at zero: solutions hence correspond to perfectly periodic oscillations with random amplitude. 

	\begin{figure}[!h]
		\centering
		\includegraphics[width=0.8\textwidth]{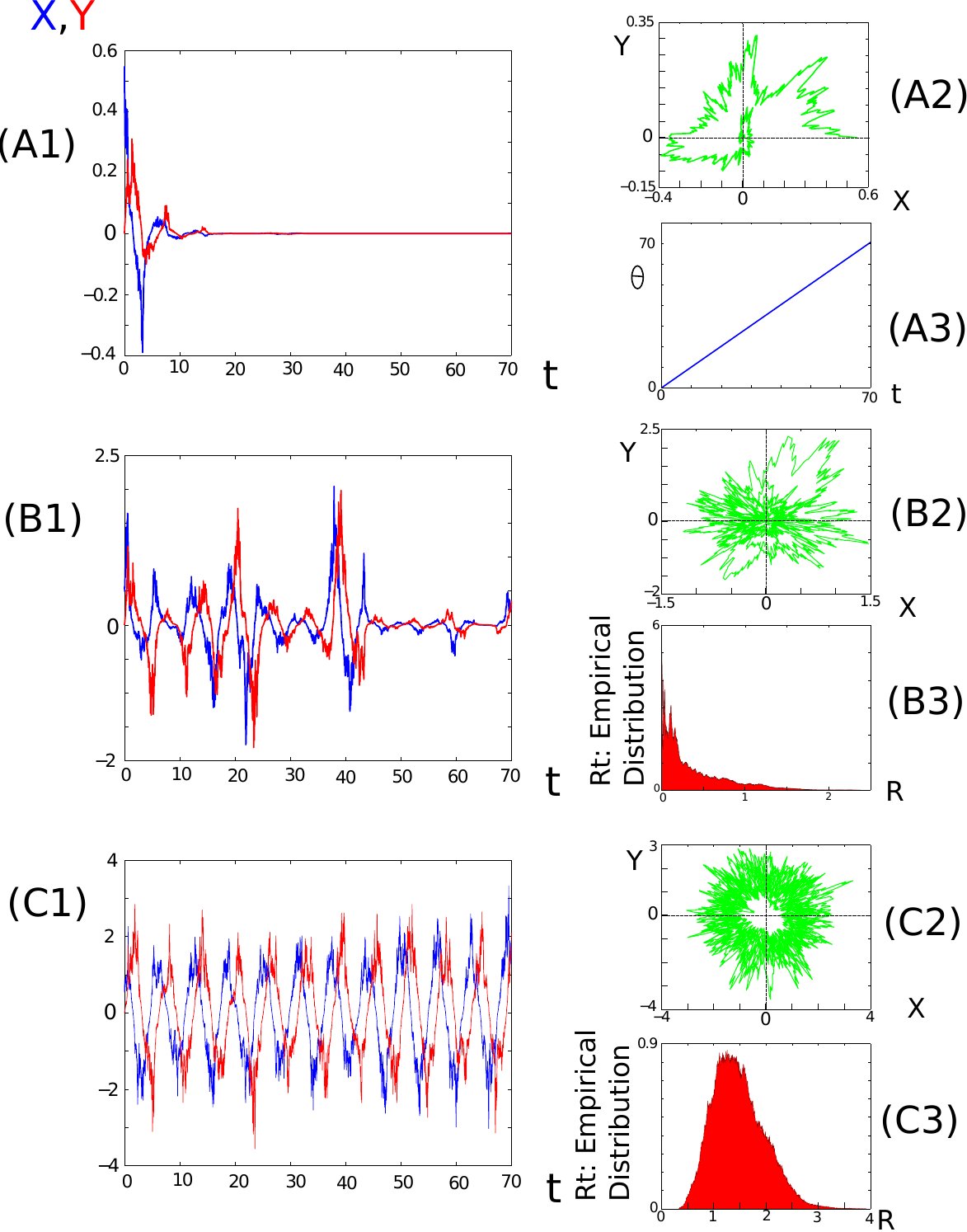}
		\caption{Simulations of sample path trajectories of $(X_t,Y_t)$ for the stochastic Hopf bifurcation with $\alpha=1$, $\mu=0$ and $\sigma=1$ for 
		 different values of $\beta$ (deterministic phase case). (A):$\beta=0.3<\frac{\sigma^2}{2}$: zero is a.s. exponentially stable; (A1): trajectories 
		(blue: $X_t$, red: $Y_t$), (A2): phase plane, (A3): phase $\theta_t$ is deterministic equal to $t$: the solutions rotate with pulsation one. This is 
		true of all three cases (A), (B) and (C). (B): $\beta=0.7\in[\frac{\sigma^2}{2},\sigma^2]$: Zero is unstable and the stationary modulus distribution 
		is peaked at zero; (B1): trajectories, (B2): phase plane, (B3): distribution of the modulus $R_t$. (C): $\beta=4>\sigma^2$: zero is unstable and the 
		distribution of the modulus has a peak around $R^*>0$ (C3): the trajectories display perfectly periodic oscillations (C1, C2) with random modulus, a highly regular behavior for a stochastic system.
		}
		\label{fig:Hopf}
	\end{figure}

	For $\sigma\neq 0$, the modulus $R_t$ is a.s. exponentially attracted towards $0$ when $\beta<\frac{\sigma^2-\mu^2}{2}$. Therefore, as the imaginary part of the noise, $\sigma$, is increased, the fixed point gains stability. For $\beta\in (\frac{\sigma^2-\mu^2}{2},\frac{2\sigma^2-\mu^2}{2})$, the fixed point zero is unstable and solutions do not converge towards the fixed point zero. The distribution of the moduli $R_t$ converge towards the stationary distribution $p_R(x)$ heavily charging zero, and solutions stay close from the fixed point with random stochastic variations. As soon as $\beta>\frac{2\sigma^2-\mu^2}{2}$, the solutions have radii centered at $R^*:=\sqrt{\beta-\frac{2\sigma^2-\mu^2}{2}}$ and present stochastic oscillations with a deterministic pulsation $(-1+\sigma\mu)$ and random phase given by a Brownian motion scaled by $\mu$. Interestingly, when $\sigma\mu$ crosses $1$, the direction of the oscillation changes: for $\sigma\mu<1$, the oscillation is counter clockwise, and for $\sigma\mu>0$, the oscillation is clockwise. This bifurcation is illustrated in Figure~\ref{fig:changeOscillation}. In the case $\sigma\mu=1$, the pulsation is null and the phase is given by $\mu W_t$: it has a probability $1/2$ to rotate clockwise and $1/2$ to rotate counterclockwise, and changes the sense of rotation. The rotation number is equal to zero.

	\begin{figure}[!h]
		\centering
			\subfigure[$\sigma=0.5$]{\includegraphics[width=.3\textwidth]{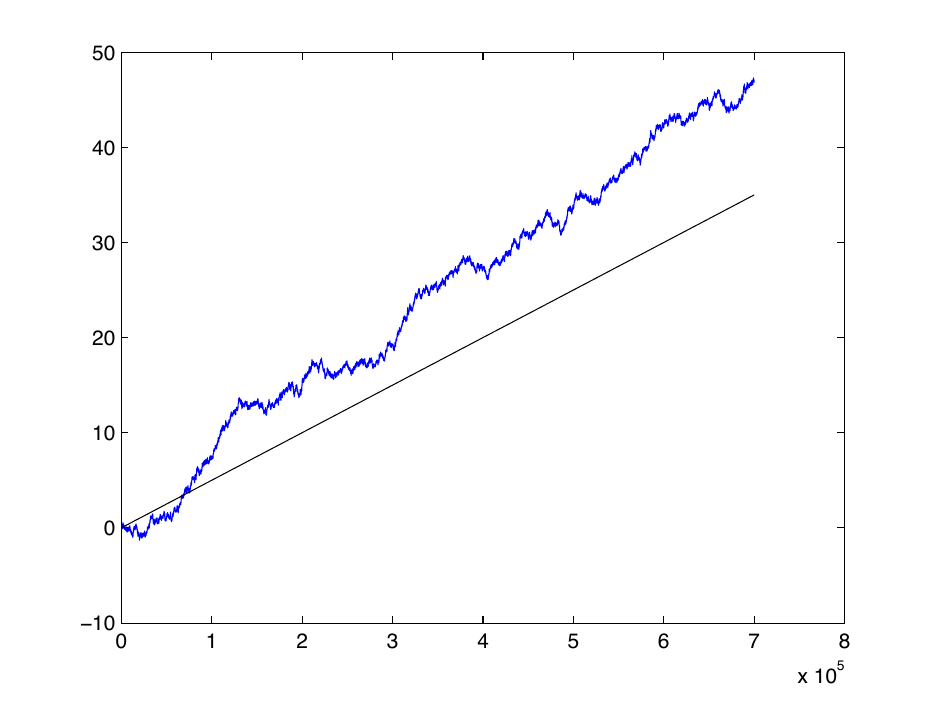}}\quad
			\subfigure[$\sigma=1$]{\includegraphics[width=.3\textwidth]{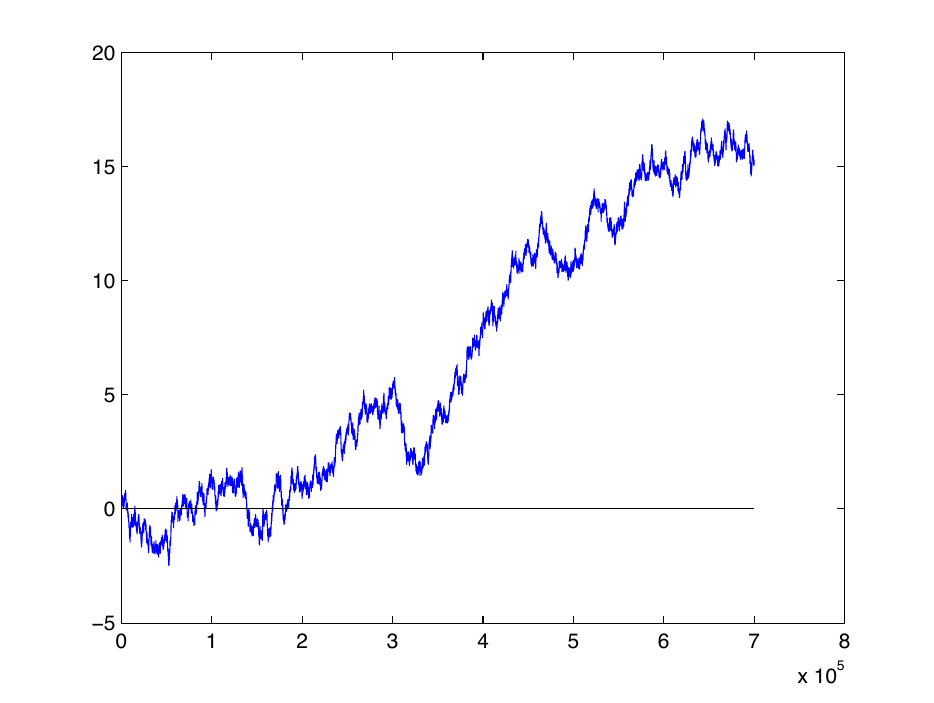}}\quad
			\subfigure[$\sigma=1.5$]{\includegraphics[width=.3\textwidth]{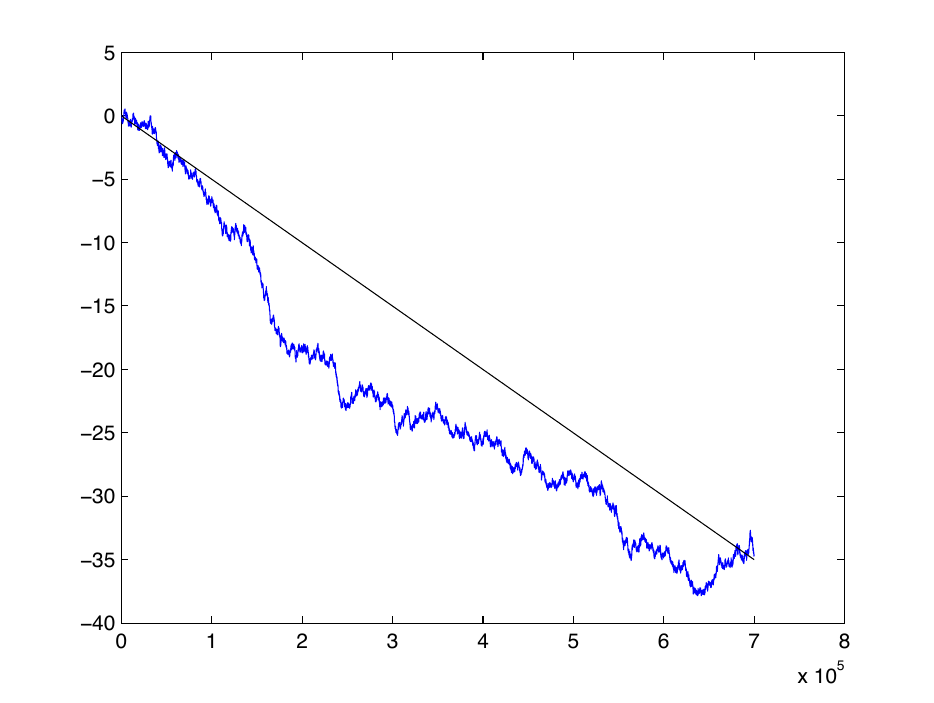}}
		\caption{Noise-induced inversion of the rotation pulsation: $\alpha=1$, $\beta=4$, $\mu=1$ for different values of $\sigma$. Blue: phase ($\theta_t$) as a function of time and black: deterministic pulsation $1-\sigma\mu$. (a) counter-clockwise rotation, (b): no rotation, trajectories appear stochastic and the phase is a Brownian motion, and (c): clockwise rotation.}
		\label{fig:changeOscillation}
	\end{figure}
	We observe in the purely imaginary noise case ($\sigma=0$), $R_t$ satisfies the deterministic pitchfork bifurcation equation. For $\beta>\frac{\mu^2}{2}$, $R_t=\sqrt{\beta +\frac{\mu^2}{2}}$ is the only attractive solution: the solution of the stochastic Hopf bifurcation asymptotically lives on a deterministic circle. Its rotates on this circle with a stochastic phase given by a Brownian motion multiplied by $\mu$ and a deterministic pulsation equal to $1$ (see Figure~\ref{fig:Modulus}). 
	
	\begin{figure}[!h]
		\centering
			\includegraphics[width=.7\textwidth]{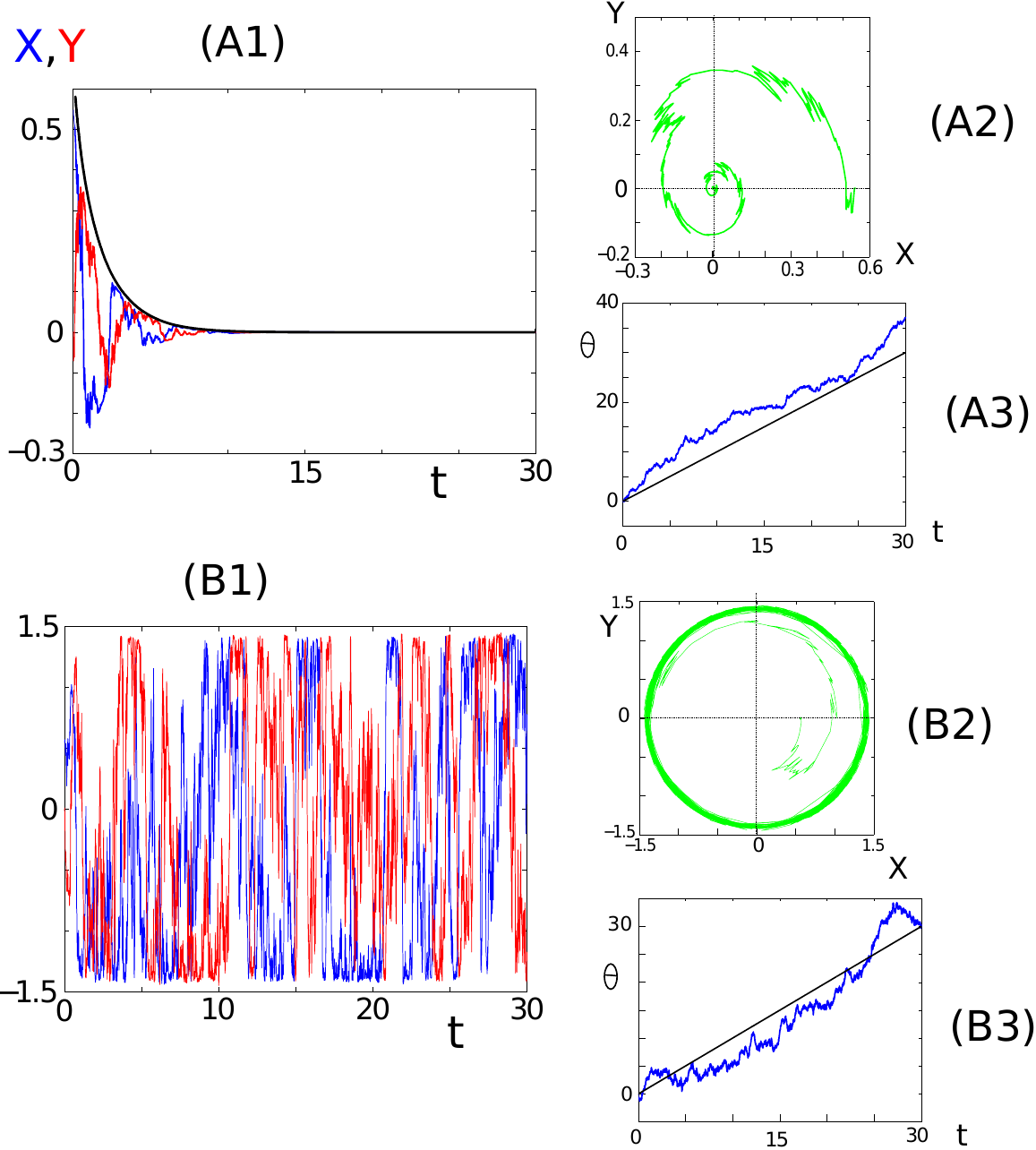}
		\caption{Simulations of sample path trajectories of $(X_t,Y_t)$ for the stochastic Hopf bifurcation with $\alpha=1$, $\sigma=0$ and $\mu=2$ (deterministic modulus case) for different values of $\beta$. (A):$\beta=-2<-\frac{\mu^2}{2}$: zero is a.s. exponentially stable; (A1): trajectories (blue: $X_t$, red: $Y_t$, black: $R_ts$), (A2): phase plane, (A3): argument $\theta_t$ in blue, deterministic drift in black. (B): $\beta=0 > -\frac{\mu^2}{2}$: the modulus converges to a constant value $\sqrt{\mu^2/2}=\sqrt{2}$; (B1): trajectories, (B2): phase plane, (B3): Argument $\theta_t$ (blue), deterministic drift (black).}
		\label{fig:Modulus}
	\end{figure}

Let us eventually note the symmetrical effects of the real and imaginary values of the noise parameter: for fixed $\beta$, $\sigma$ tends to stabilize $0$, whereas $\mu$ has the effect of destabilizing $0$. These two effects compensate each other, and when choosing $\sigma=\mu$, the loss of stability of zero arises at $\beta=0$ precisely as it is the case of the deterministic bifurcation. However, cycles appear for $\beta>\sigma^2/2$ in that case, and hence we observe a systematic delay in the apparition of cycles for $\sigma\neq 0$. 

	We hence conclude that though obviously perturbed by the presence of noise, the global behavior observed in the stochastic Hopf bifurcation with linear noise resembles the deterministic picture: as $\beta$ is increased, a limit cycle appears, around which the solutions stochastically rotate. The picture will be relatively different in the nonlinear noise case $\alpha\neq 1$. In that case we prove the following:
	\begin{proposition}\label{pro:HopfAlphaNot1}
		In the case of the supercritical ($\varepsilon=-1$) stochastic Hopf bifurcation, we have:
		\begin{itemize}
			\item For $\alpha<1$, the fixed point $0$ is:
			\begin{itemize}
				\item stable in probability for $\vert \mu \vert<\vert \sigma\vert$
				\item unstable in probability for $\mu>\sigma$
			\end{itemize}
			Moreover, any solution reaches zero in finite time.
			\item For $\alpha>1$, the fixed point $0$ is:
			\begin{itemize}
				\item stable in probability for $\beta<-\frac{\mu^2}{2}$
				\item unstable in probability otherwise. Two additional stationary solutions exist in that case.
			\end{itemize}
			The solutions almost surely never reach zero. 
		\end{itemize}
	\end{proposition}

	\begin{remark}
		Let us emphasize the fact that the case $\alpha<1$ is substantially different from the general case: indeed, we had seen that the singular point was always stochastically stable. Surprisingly here the stability of zero depends on the relative values of the real and imaginary parts of the noise. 
	\end{remark}
	\begin{proof}
		The case $\alpha>1$ falls in the general analysis developed in section~\ref{sec:general}. Indeed, the diffusion is of the form of equation~\eqref{eq:generalg} with $g(x)=\frac{\mu^2}{2} x^{2\alpha-1} -x^3$ which clearly satisfies assumptions~\ref{assumption:H1} and ~\ref{assumption:H2}. The general analysis hence applies and directly leads to the conclusion of the proposition.

		In the case $\alpha<1$, the equation was not covered in the general analysis, since the drift includes a term $\frac{\mu^2}{2} x^{2\alpha-1}$ and $2\alpha-1<1$. However, the same principles may be applied to that case. First of all, existence and uniqueness of solutions is proved using the same arguments, and the a.s. boundedness of the process as well. Hence there exist a unique solution to the stochastic Hopf bifurcation process with $\alpha<1$, which is almost surely bounded for all times. The stability of zero is investigated as follows. For $\mu>\sigma$, the Lyapunov function $V(x)=-log(x)$ is $C_0^2$ around zero, positive, diverges at zero, and 
		\[\Lop V(x)= \left(-\frac{\mu^2}{2}+\frac{\sigma^2}{2}\right)x^{2\alpha-2} + o(x^{2\alpha-2})\]
		and is hence negative around zero, proving that zero is unstable for $\mu>\sigma$ in probability.

		For $\mu<\sigma$, we define the Lyapunov function $V(x)=x^{\gamma}$ with $\gamma=\frac{\sigma^2-\mu^2}{2\sigma^2}>0$. This function is $C_0^2$ and we have:
		\begin{align*}
			\Lop V(x) &= \gamma \left(\frac{\mu^2}{2}+(\gamma-1)\frac{\sigma^2}{2}\right)x^{2\alpha-2+\gamma} + o(x^{2\alpha-2+\gamma})\\
			& = \gamma\frac{\mu^2-\sigma^2}{4}x^{2\alpha-2+\gamma} + o(x^{2\alpha-2+\gamma}) 
		\end{align*}
		and hence is negative close of zero. Using the same techniques as in the general theory, it is easy to conclude that the singular point is unstable in probability. 
		In order to prove that the solutions almost surely hit zero in finite time, we proceed as in the proof of proposition~\ref{pro:firsthittingtimezero}. We know that the process is almost surely bounded, and analyze Feller's scale functions. It is clear that $p(x)\to\infty$ as $x\to\infty$ following the proof of proposition~\ref{pro:firsthittingtimezero}: the proof only uses the higher order term $-x^3$. We hence need to ensure that $\lim_{x\to 0^+} v(x)<\infty$. This property is related to fine properties of the scale functions close from zero, which is now governed by the term $\mu^2/2 \;x^{2\alpha-1}$. We hence have, around $0$:
		\[G(x)=\frac{\mu^2}{2\sigma^2}\log(x) +\frac{\lambda}{\sigma} \frac{x^{2-2\alpha}}{2(1-\alpha)} -\frac{x^{2(2-\alpha)}}{2(2-\alpha)\sigma^2}\]
		and therefore we can obtain the explicit expression of $p(x)$:
		\[p(x)=\int_c^x y^{-\frac{\mu^2}{\sigma^2}}\exp\left(-\frac{\lambda}{\sigma^2(1-\alpha)}y^{2-2\alpha} + \frac{2y^{4-2\alpha}}{(4-2\alpha)\sigma^2}\right)dy\]
		More importantly, in order to characterize $v$, we consider the term $G(y)-G(z)$ which is equivalent close from $0$ to: $G(y)-G(z)\sim_{0^+} \frac{\mu^2}{2\sigma^2}\log(\frac{y}{z})$. This implies that:
		\begin{align*}
			\int_y^x \exp(2G(y)-G(z))\,dz &\sim_{0^+} \int_y^x \exp\left(\frac{\mu^2}{\sigma^2}\log(\frac{y}{z})\right)\,dz \\
			&\sim \frac{1}{1-\mu^2/\sigma^2} \left(\left(\frac y x\right)^{\mu^2/\sigma^2}-1\right)
		\end{align*}
		and hence:
		\[v(x)\sim \frac{2}{\sigma^2} (\frac{x^2}{\mu^2/\sigma^2 +1}-x)\]
		This function will hence remain bounded as $x\to\infty$, ensuring using Feller's test for explosion that the first exit time of the interval $(0,\infty)$ is almost surely bounded. The almost sure boundedness of the process allows concluding that the probability first hitting time of zero is almost surely finite, which ends the proof. 
	\end{proof}

	Therefore, similarly to the case of the stochastic pitchfork bifurcation, we observed that for $\alpha>1$, the noise does not affect the stability of $0$ and the loss of stability precisely appears for $\beta=0$ as in the deterministic case. The behavior of the system beyond the bifurcation point depends on the shape of the stationary distribution, and in that case again the solution reaches a maximum for a positive value, reproducing qualitatively the behavior of the deterministic pitchfork bifurcation. 

	For $\alpha<1$, the almost sure absorption at zero of the solution persists. However, the stochastic stability of the fixed point zero now depends on the relative values of $\mu$ and $\sigma$. This distinction is not fundamental as long as we are concerned with sample path properties, and similarly to the general case, almost any sample path will eventually be absorbed at zero in finite time.

	Let us eventually remark that the universality properties of this normal form are harder to obtain than in the case of the pitchfork and the saddle-node bifurcation, first because we are in higher dimensions, but also because generic reduction to normal form involves nonlinear changes of parameters, which was neither the case in the fold nor in the pitchfork bifurcation (see~\cite{kuznetsov:98}). In our stochastic context, such nonlinear changes of coordinates do not have the same effect as in the deterministic case because of the stochastic terms affecting the drift through the It\^o formula. Let us however give an idea of the kind of calculations possible to identify stochastic Hopf bifurcations. To this end, let us consider a two-dimensional system:
	\[dX_t=F(X_t)\,dt+G(X_t)dW_t\]
	where $G(x)$ is precisely of the complex noise form considered above. The modulus of the solution $R_t$ satisfies the SDE:
	\[dR_t=\frac 1 {R_t} \left(X^1_t F^1(X_t)+X^2_t F^2(X_t) + \frac{\mu^2}{2}R_t\right)\,dt + (X^1_t G^1(X_t)+X^2_t G^2(X_t))dW_t\]
	where $X^1,F^1$ and $G^1$ represent the first component of the vector field. The question that now arises is whether this system can be reduced to a pitchfork equation with possibly higher order terms, which can be checked in a case-by-case basis. 

	\begin{example}
		Let us consider a Wilson and Cowan neural network composed of an excitatory and an inhibitory population. The network equations in that case read:
	\begin{equation}\label{eq:WCHopf}
		\begin{cases}
				d\nu^1_t = (-\nu^1_t + S(g \nu^1_t) - S(g \nu^2_t))+(\sigma \nu^1_t-\mu \nu^2_t)\vert \nu_t \vert^{\alpha-1}dW_t\\
				d\nu^2_t = (-\nu^2_t + S(g \nu^1_t) + S(g \nu^2_t))+(\sigma \nu^2_t+\mu \nu^1_t)\vert \nu_t \vert^{\alpha-1}dW_t
			\end{cases}
	\end{equation}
		with $S(x)=\tanh(g\,x)$ and $\vert \nu_t \vert$ is the modulus of $(\nu^1,\nu^2)$. In the deterministic case, it is straightforward to show that the system undergoes a Hopf bifurcation at $g=1$. Let us now consider the equation satisfied by the modulus $\rho_t=\vert \nu_t\vert$. Using It\^o's formula we obtain:
		\[d\rho_t= \Big((-1+g)\rho_t+\mu^2/2\rho_t^{2\alpha-1} + g(\nu^1_t,\nu^2_t)\Big)\,dt+\sigma \rho_t^{\alpha}dW_t\]
		 with 
		\begin{align*}
			g(x,y) &= -\frac{g^3}{3\rho} (x^4+y^4+x^3y+xy^3)+O(\rho^4) \\
			& = -\frac{g^3}{3\rho} ((x+y)^4 -2x^2y^2)+O(\rho^4) \leq  -\frac{g^3}{12}\rho^3 +O(\rho^4)
		\end{align*}
		The argument satisfies the equation:
		\[d\theta_t= \big(g  - \sigma\mu\rho^{2\alpha-2} + O(\rho_t^2)\big)\,dt+ \mu\rho^{\alpha-1}dW_t\]
		The dynamics of the system can hence be analyzed using the above analysis:
		\begin{itemize}
			\item $\alpha=1$, zero is stable in probability when $(-1+g+\mu^2/2)\leq \sigma^2/2$ and unstable otherwise. At this point, a stochastic Hopf bifurcation arises, and oscillations appear, with pulsation locally equivalent to $g-\mu\sigma$ (with order two corrections in $\rho$) and stochastic phase given by a Brownian motion multiplied by $\mu$. The oscillation orientation bifurcation takes place for $g=\mu\sigma$. 
			\item For $\alpha>1$ zero is stable if and only if $(-1+g+\mu^2/2)\leq 0$.
			\item For $\alpha<1$, the fixed point is stable in probability if $\vert \mu\vert <\vert \sigma \vert$ and unstable otherwise. The first hitting time of the singular point is almost surely finite in both cases. 
		\end{itemize} 
		These results are illustrated in Fig.~\ref{fig:WCHopf}
		\begin{figure}[htbp]
			\begin{center}
				\subfigure[$\alpha=0.8$, $\mu=2$, $\sigma=1$]{\includegraphics[width=.3\textwidth]{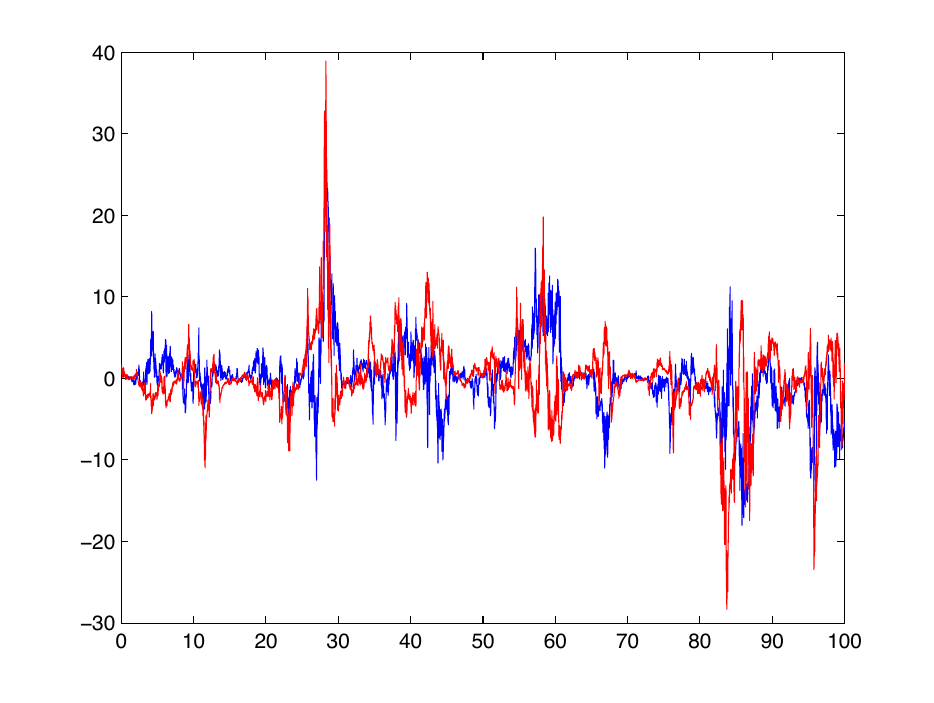}}\quad
				\subfigure[$\alpha=1$, $\mu=2$ $\sigma=1$]{\includegraphics[width=.3\textwidth]{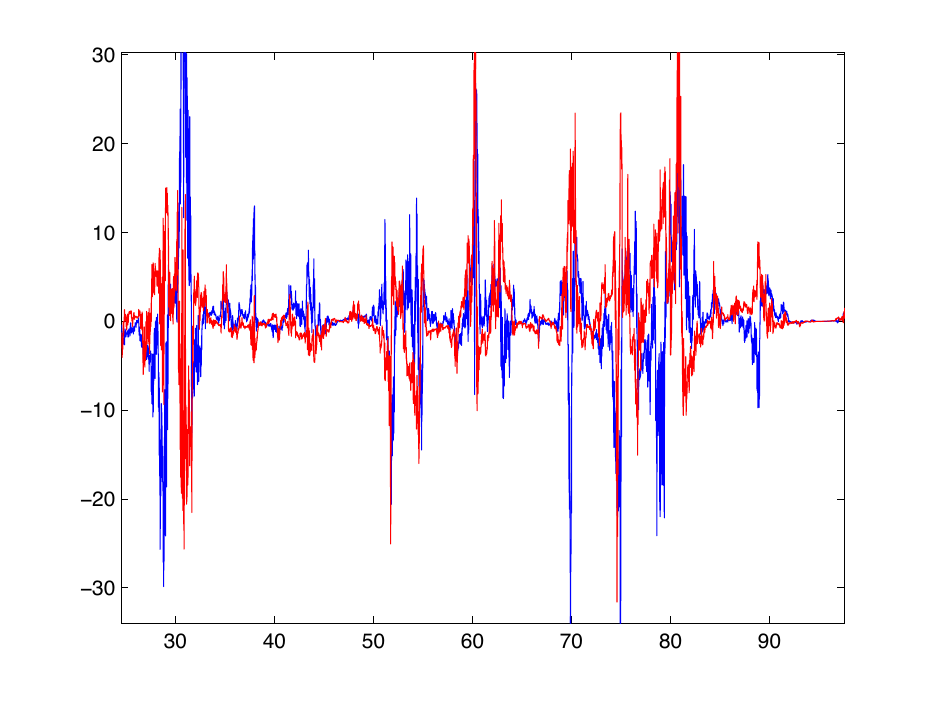}}\quad
				\subfigure[$\alpha=1$, $\mu=1$ $\sigma=0.5$]{\includegraphics[width=.3\textwidth]{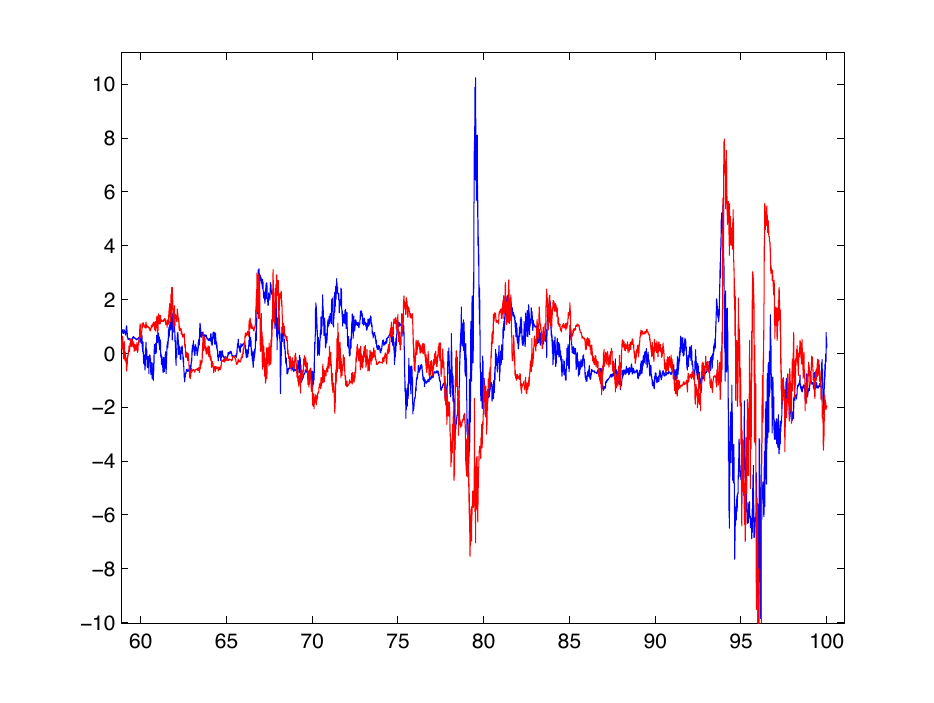}}\\
				\subfigure[$\alpha=0.8$, $\mu=1$, $\sigma=2$]{\includegraphics[width=.3\textwidth]{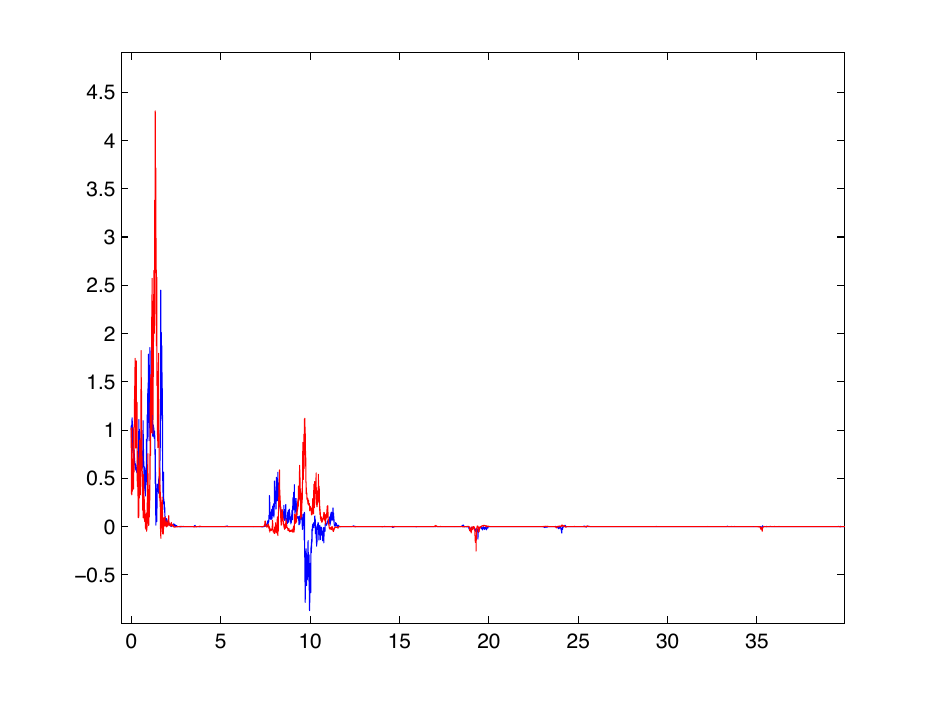}}\quad
				\subfigure[$\alpha=1$, $\mu=2$ $\sigma=1$]{\includegraphics[width=.3\textwidth]{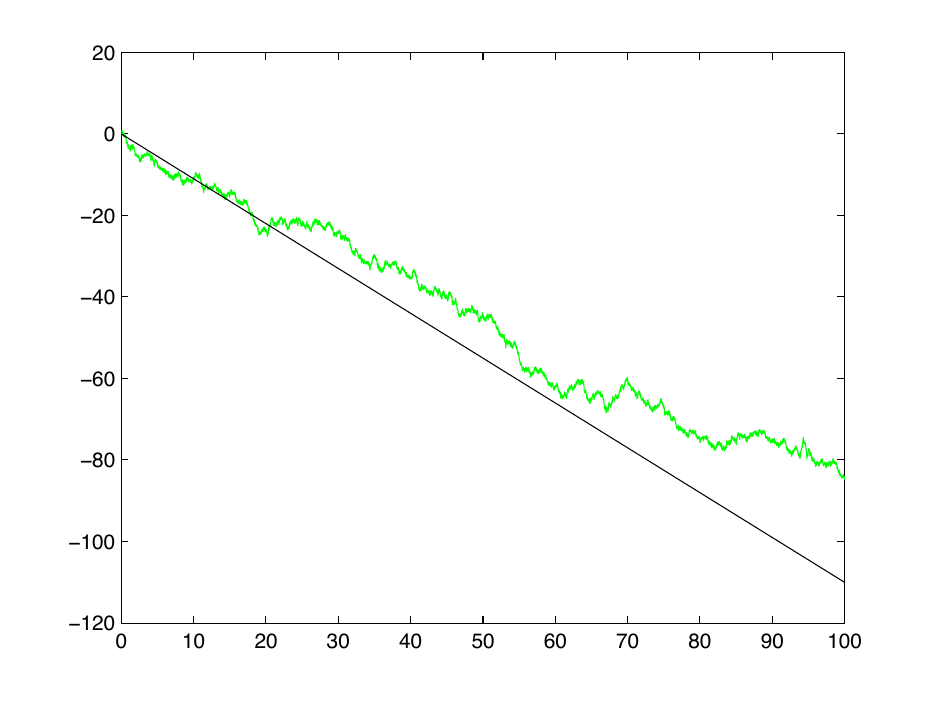}}\quad
				\subfigure[$\alpha=1$, $\mu=1$ $\sigma=0.5$]{\includegraphics[width=.3\textwidth]{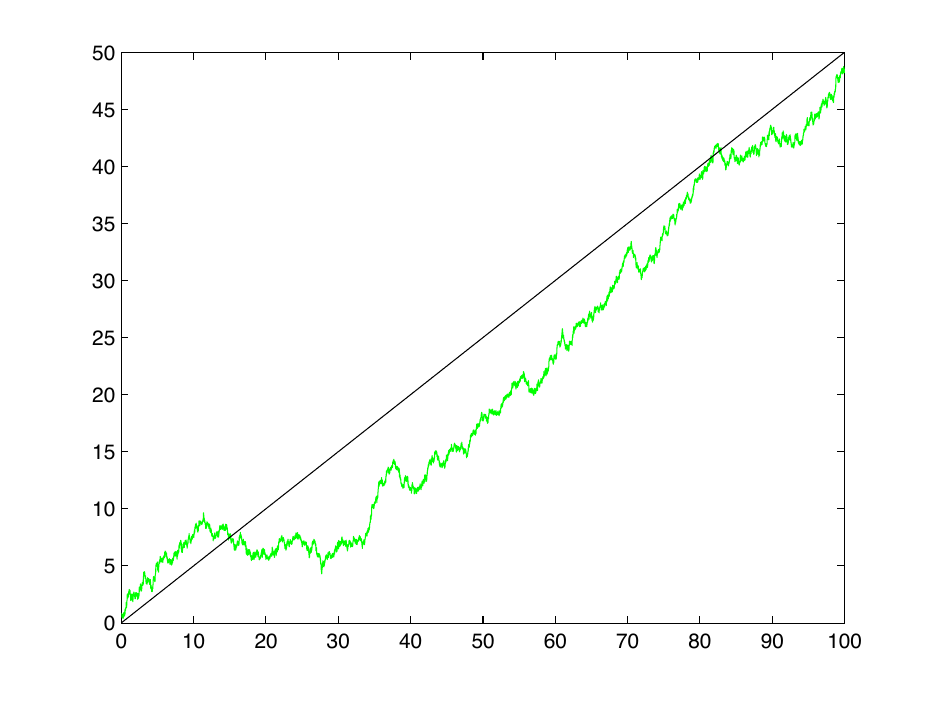}}
			\end{center}
			\caption{Dynamics of the Wilson and Cowan two-populations system~\eqref{eq:WCHopf}. (a) and (d) correspond to $\alpha<1$: for $\vert \mu\vert <\vert\sigma\vert$, the singular point is stable (d) and unstable otherwise (a). The other figures correspond to $\alpha=1$ and illustrate the change of rotation orientation bifurcation at $\sigma\mu=g$. All the figures correspond to $g=1$. }
			\label{fig:WCHopf}
		\end{figure}
	\end{example}


\end{document}